\numberwithin{equation}{section}
\theoremstyle{plain}
\newtheorem{theorem}[subsection]{Theorem}
\newtheorem{proposition}[subsection]{Proposition}
\newtheorem*{prop-no-number}{Proposition}
\newtheorem*{prop-}{Proposition}
\newtheorem{lemma}[subsection]{Lemma}
\theoremstyle{definition}
\newtheorem{defn}[subsection]{Definition}
\newtheorem{remark}[subsection]{Remark}
\renewcommand{\leq}{\leqslant}
\renewcommand{\geq}{\geqslant}
\renewcommand{\subset}{\subseteq}
\renewcommand{\supset}{\supseteq}
\def\E{\mathbb{E}}
\def\Z{\mathbb{Z}}
\def\C{\mathbb{C}}
\def\N{\mathbb{N}}
\def\P{\mathbb{P}}
\def\F{\mathbb{F}}
\def\Ghat{\widehat{G}}
\DeclareMathOperator{\Bohr}{Bohr}
\DeclareMathOperator{\rk}{rk}
\DeclareMathOperator{\Spec}{Spec}
\DeclareMathOperator{\Bin}{Bin}
\newcommand{\stirlingtwo}[2]{S_2(#1, #2)}
\providecommand{\abs}[1]{\lvert#1\rvert}
\providecommand{\norm}[1]{\lVert #1 \rVert}
\providecommand{\ceiling}[1]{\lceil#1\rceil}
\providecommand{\floor}[1]{\left\lfloor#1\right\rfloor}
\providecommand{\tup}[1]{{\vec{#1}}}
\newcommand{\brac}[1]{\left( #1\right)}
\begin{document}

\begin{frontmatter}[classification=text]

\title{Logarithmic bounds for Roth's theorem via almost-periodicity} 

\author[tfb]{Thomas F. Bloom}
\author[os]{Olof Sisask}

\begin{abstract}
We give a new proof of logarithmic bounds for Roth's theorem on arithmetic progressions, namely that if $A \subset \{1,2,\ldots,N\}$ is free of three-term progressions, then $\abs{A} \leq N/(\log N)^{1-o(1)}$. Unlike previous proofs, this is almost entirely done in physical space using almost-periodicity. 
\end{abstract}
\end{frontmatter}

\section{Introduction}

We shall prove here the following version of Roth's theorem on arithmetic progressions.\footnote{For details of the asymptotic notation we use, see the next section.}

\begin{theorem}\label{thm:roth}
Let $r_3(N)$ denote the largest size of a subset of $\{1,2,\ldots,N\}$ with no non-trivial three-term arithmetic progressions. Then
\[ r_3(N) \ll \frac{N}{(\log N)^{1-o(1)}}. \]
\end{theorem}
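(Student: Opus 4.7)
The plan is a density increment argument carried out on Bohr sets in $\Z/N\Z$, replacing the standard Fourier-analytic step in Bourgain's approach with physical-space almost-periodicity. The core claim I would aim to establish is an increment lemma of the following shape: if $A$ is contained in a regular Bohr set $B \subset \Z/N\Z$ with relative density $\alpha$ and contains no non-trivial three-term progressions, then there is a sub-Bohr set $B' \subset B$ of only mildly larger rank and mildly smaller radius on which $A$ has density at least $\alpha(1 + c)$ for some absolute $c > 0$.

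The main work lies in proving this increment. I would first apply a Croot--Sisask-style $L^p$ almost-periodicity lemma: for $p$ of order $\log(1/\alpha)$ and a small scale parameter $\epsilon$, find a regular sub-Bohr set $T \subset B$ of rank only polylogarithmic in $1/\alpha$ such that
\[
\| (1_A \ast \mu_S)(\cdot + t) - (1_A \ast \mu_S) \|_p \leq \epsilon \alpha
\]
uniformly for $t \in T$, where $\mu_S$ is a suitably chosen averaging measure on a small sub-Bohr set $S$. Taking $p$ logarithmic lets one pass this $L^p$ bound essentially to $L^\infty$ after a pigeonhole argument. The key quantitative point is that $\abs{T}/\abs{B} \geq \exp(-\log^{O(1)}(1/\alpha))$, rather than the polynomial-in-$1/\alpha$ loss that one pays in standard Fourier-analytic Bohr set arguments; this subexponential dependence is what separates logarithmic bounds from the weaker bounds obtained by Bourgain, Sanders, and others.

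Given the almost-periodicity, I would finish the increment by a direct 3-AP count. The 3-AP-free hypothesis gives $\sum_{x,d} 1_A(x) 1_A(x+d) 1_A(x+2d) = \abs{A}$, which is much smaller than the ``expected'' count $\alpha^3 \abs{B}^2$. On the other hand, convolving $1_A$ with $\mu_S$ at two different scales and using approximate invariance under translations from $T$ should show that this trilinear count is morally close to $\alpha^3 \abs{B}^2$ up to error controlled by the $L^p$ almost-periodicity; the discrepancy must be accounted for by $1_A \ast \mu_S$ having anomalously high density on some translate of a sub-Bohr set, and this in turn can be converted into a genuine density increment for $A$ on a sub-Bohr set via a standard passage from $1_A \ast \mu_S$ back to $1_A$.

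Iterating the increment statement $O(\log(1/\alpha))$ times forces $\alpha$ up to a constant, at which point the terminal Bohr set is forced to be too small for the iteration to continue, giving a contradiction; tracking Bohr set parameters (rank, radius, regularity) through the iteration then yields $\alpha \ll (\log N)^{-1+o(1)}$. The main obstacle I expect is calibrating the almost-periodicity lemma quantitatively: pushing the rank of $T$ down to polylogarithmic in $1/\alpha$ rather than polynomial requires the $L^p$ Croot--Sisask lemma in essentially its sharpest form, along with careful exploitation of regular Bohr set properties and randomization of the averaging measure $\mu_S$, all of which must be tracked cleanly through the iteration without accumulating losses at any step.
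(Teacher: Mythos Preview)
Your overall strategy---density increment on Bohr sets with almost-periodicity replacing the Fourier step---matches the paper's, but there is a key structural idea missing and a quantitative misconception that would derail the argument.

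The missing idea is a dichotomy on the size of $\norm{\mu_A * 1_A}_{L^{2m}(B')}$ with $m \approx \log(1/\alpha)$. The almost-periodicity lemma the paper uses (Theorem~\ref{thm:LpBohr}) is applied to the autoconvolution $\mu_A*1_A$, not to a smoothed indicator $1_A*\mu_S$, and carries an error term of shape $\epsilon \norm{\mu_A*1_A}_{L^m(B')}^{1/2} + \epsilon^2 + \delta$. To make this $O(\alpha)$ with $\epsilon \sim \alpha^{1/2}$ one needs $\norm{\mu_A*1_A}_{L^m(B')} \ll \alpha$, which need not hold. The paper therefore splits into two cases: if $\norm{\mu_A*1_A}_{L^{2m}(B')} \geq 10\alpha$, then almost-periodicity at exponent $2m$ shows $\norm{\mu_A*1_A*\mu_T}_{L^{2m}} \geq 5\alpha$, which immediately gives $\norm{1_A*\mu_T}_\infty \geq 5\alpha$; if instead $\norm{\mu_A*1_A}_{L^{2m}(B')} \leq 10\alpha$, almost-periodicity at exponent $4m$ is now efficient enough that H\"older compares $\langle \mu_A*1_A*\mu_T, \mu_{2\cdot A'}\rangle$ to the genuine $3$AP count, and smallness of the latter is converted to a density increment by a pointwise convexity argument. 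Your sketch treats only the second scenario and does not explain how to control the almost-periodicity error when $\mu_A*1_A$ has a large $L^p$ norm; without this dichotomy the step ``the trilinear count is close to $\alpha^3\abs{B}^2$ up to error controlled by almost-periodicity'' does not go through.

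Your quantitative claims are also off. The rank of the sub-Bohr set $T$ obtained is not polylogarithmic in $1/\alpha$: with $\epsilon \sim \alpha^{1/2}$ and $m \sim \log(1/\alpha)$, the rank increase is $d' \ll m\epsilon^{-2}\log^{O(1)}(1/\alpha) \sim \alpha^{-1}\log^{O(1)}(1/\alpha)$, and correspondingly $\abs{T}/\abs{B}$ is of order $\exp\bigl(-\alpha^{-1}\log^{O(1)}(1/\alpha)\bigr)$, not $\exp\bigl(-\log^{O(1)}(1/\alpha)\bigr)$. It is precisely this $\alpha^{-1}$ in the rank that, after iterating $O(\log(1/\alpha))$ times with geometric density growth, produces a final rank of order $\alpha^{-1}\log^{O(1)}(1/\alpha)$ and hence the bound $\alpha \gg (\log N)^{-1+o(1)}$; were the rank truly polylogarithmic, the iteration would give a far stronger bound than the theorem claims. (Incidentally, Sanders had already obtained logarithmic bounds; the paper's contribution is a simpler route to them, not a quantitative jump over Sanders.)
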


Roth \cite{Ro:1953} proved this with a denominator of $\log \log N$ in the 1950s, laying the foundation for using harmonic analysis to tackle problems of an additive nature in rather arbitrary sets of integers. Subsequent improvements were made by Heath-Brown \cite{He:1987} and Szemer\'edi \cite{Sz:1990}, increasing the denominator to $(\log N)^c$ for some positive constant $c$, and then by Bourgain \cite{Bo:1999, Bo:2008}, obtaining such a bound with $c = \frac{1}{2}-o(1)$ and then $c = \frac{2}{3} - o(1)$. Sanders \cite{Sa:2012, Sa:2011} then proved this with $c = \frac{3}{4} - o(1)$ and was then the first to reach the logarithmic barrier in the problem, obtaining $c = 1-o(1)$. The best bounds currently known were then given by the first author \cite{Bl:2016},
\[ r_3(N) \ll \frac{(\log\log N)^4}{\log N} N. \]
Sanders's result \cite{Sa:2011} had a power of $6$ in place of the $4$ here, but the two techniques were quite orthogonal: \cite{Bl:2016} proceeds by getting structural information about the spectrum of the indicator function of a set $A$ with few three-term progressions, whereas \cite{Sa:2011} employed a result on the almost-periodicity of convolutions \cite{CrSi:2010} due to Croot and the second author, coupling this with a somewhat intricate combinatorial thickening argument on the physical side.

This article presents a fairly simple proof of logarithmic bounds for Roth's theorem, showing that they follow quite directly from almost-periodicity results along the lines of \cite{CrSi:2010}. Our focus is on clarity of exposition, and we therefore do not take steps to optimise the power of the $\log\log N$ term that we would obtain.

Some of the ideas in the present paper have been inspired by the authors' ongoing work on super-logarithmic bounds for Roth's theorem. In particular, there is a close relationship between $L^p$ norms of convolutions considered in this paper and the higher additive energies of the set of large Fourier coefficients used in the work of Bateman and Katz \cite{BaKa:2012} achieving super-logarithmic bounds in Roth's theorem over $\mathbb{F}_3^n$. 

\section{Notation, main theorem, and outline of proof}

\subsection*{Notation for averaging and counting}
The argument proceeds by studying high $L^p$-norms of the convolution $1_A*1_A$ of the indicator function of a set $A$ with itself. We use the following conventions for these objects. Let $G$ be a finite abelian group and let $f, g : G \to \C$ be functions. We define the convolution $f*g : G \to \C$ by
\[ f*g(x) = \sum_y f(y) g(x-y). \]
In considering $L^p$-norms on subsets of $G$, it will be convenient to sometimes use sums and to sometimes use averages. To distinguish between these, we write, for $B \subset G$, 
\[ \norm{f}_{\ell^p(B)}^p = \sum_{x \in B} \abs{f(x)}^p \qquad \text{and} \qquad \norm{f}_{L^p(B)}^p = \E_{x \in B} \abs{f(x)}^p, \]
where $\E_{x \in B} = \tfrac{1}{\abs{B}} \sum_{x \in B}$. If we write just $\norm{f}_p$ then we mean $\norm{f}_{L^p(G)}$. As usual $\norm{f}_\infty = \sup_{x \in G} \abs{f(x)}.$ We also write
\[ \langle f, g \rangle = \sum_{x \in G} f(x) \overline{g(x)}. \]
Finally, if $A \subset B \subset G$, we write $1_B$ for the indicator function of $B$, and $\mu_B$ for both the function $1_B/\abs{B}$ and for the measure $\mu_B(A) = \abs{A}/\abs{B}$; this latter quantity is known as the \emph{relative density} of $A$ in $B$. In the case $B = G$, this is known simply as the \emph{density} of $A$.

Where we have chosen discrete normalisations, the reader who is used to `compact normalisations' should find comfort in the fact that much of what we shall consider is normalisation-independent. For example, regardless of normalisation-convention, the function $1_A*\mu_B$ is always
\[ 1_A*\mu_B(x) = \E_{t \in B} 1_A(x-t). \]

We shall count three-term arithmetic progressions (3APs) across various sets. For $A, B, C \subset G$, with $2\cdot B \coloneqq \{ 2x : x \in B \}$, we write
\[ T(A,B,C) = \sum_{\substack{x,y,z \\ x+z=2y}} 1_A(x)1_B(y)1_C(z) = \langle 1_A*1_C, 1_{2\cdot B} \rangle \]
for the number of 3APs in $G$ with starting point in $A$, mid-point in $B$ and end-point in $C$. If $A=B=C$ we write just $T(A)$. Note that this counts also \emph{trivial} 3APs, where $x=y=z$.

\subsection*{Main theorem}
Our main theorem, then, is the following.

\begin{theorem}[Roth's theorem, counting version]\label{thm:rothCounting}
Let $G$ be a finite abelian group of odd order, and let $A \subset G$ be a set of density $\alpha > 0$. Then 
\[ T(A) \geq \exp\left(-C\alpha^{-1} (\log{2/\alpha})^C \right) \abs{A}^2 \]
where $C>0$ is an absolute constant. In particular, if $\alpha \geq (\log\log \abs{G})^C/\log \abs{G}$ then $A$ contains a non-trivial three-term arithmetic progression.
\end{theorem}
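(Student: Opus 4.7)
The strategy is a density-increment argument carried out in physical space, using the Croot--Sisask almost-periodicity result in place of Fourier analysis; Bohr sets in $G$ play the role that long arithmetic progressions play in the classical Fourier treatment. I would iteratively pass to Bohr sets of larger rank on which $A$ has higher relative density, until a 3AP count can be extracted directly. Concretely, one iterates the following dichotomy: starting with a Bohr set $B \subset G$ and a shift $x$ such that $\mu_B(A - x) = \alpha' \geq \alpha$, either (a) the 3AP count $T(A \cap (B + x))$ is a constant fraction of its random-model expectation $\alpha'^3 T(B) \asymp \alpha'^3 \abs{B}^2$, in which case $T(A) \geq T(A\cap(B+x))$ combined with the lower bound on $\abs{B}/\abs{G}$ yields the claimed bound; or else (b) there is a sub-Bohr set $B' \subset B$ of rank exceeding that of $B$ by only $O((\alpha')^{-1}\log^{O(1)}(2/\alpha'))$ and of comparable radius, together with a shift $x'$, for which $\mu_{B'}(A - x') \geq (1 + c)\alpha'$ for an absolute $c > 0$. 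Since density is bounded by $1$, step (b) can be applied at most $K = O(\log(2/\alpha))$ times before (a) must trigger.

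The heart of the argument is step (b). Assuming the 3AP count is abnormally small, one applies the Croot--Sisask almost-periodicity lemma to the convolution $1_{A-x} \ast \mu_{B''}$, where $B''$ is an auxiliary sub-Bohr set of $B$, with $L^p$-parameter $p \asymp \log(2/\alpha')$ and tolerance $\epsilon$ a small constant multiple of $\alpha'$. This yields a further sub-Bohr set $B' \subset B''$ over which translates of $1_{A-x} \ast \mu_{B''}$ vary in $L^p$ by at most $\epsilon$. Expanding $T(A \cap (B+x))$ as an inner product $\langle 1_A \ast 1_A, 1_{2\cdot A}\rangle$ suitably localised to $B$, and averaging a shift by $B'$ in one convolution factor, the near-invariance lets one replace a factor of $1_A$ by $1_A \ast \mu_{B'+x''}$ at a small $L^p$-cost. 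The hypothesised 3AP deficit then translates into a large $L^2$-discrepancy of $1_{A-x} \ast \mu_{B'}$ about its mean $\alpha'$, and a pointwise extraction produces the shift $x'$ realising the density increment.

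Summing the geometric series of per-step rank contributions $(\alpha_i)^{-1}\log^{O(1)}(2/\alpha_i)$ where $\alpha_i = (1+c)^i \alpha$, the terminal Bohr set $B_K$ has rank $O(\alpha^{-1}\log^{O(1)}(2/\alpha))$ and analogously controlled radius, so $\abs{B_K}/\abs{G} \geq \exp(-O(\alpha^{-1}\log^C(2/\alpha)))$ for some absolute $C$. Combined with the trivial count of 3APs inside $B_K + x_K$ where $A$ is now essentially fully dense, and using $\abs{A}^2 = \alpha^2\abs{G}^2$, this delivers the stated lower bound on $T(A)$. The principal obstacle I foresee is the quantitative one: squeezing the per-step rank increase down to $(\alpha')^{-1}\log^{O(1)}(2/\alpha')$ rather than something like $(\alpha')^{-2}$, since the latter would sum to $\alpha^{-2}$ and yield only the weaker $(\log N)^{1/2}$-type bound familiar from Bourgain's work. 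This refinement hinges on exploiting a genuinely large value of $p$ in the almost-periodicity lemma together with a careful passage from $L^p$-almost-invariance to a pointwise density increment.
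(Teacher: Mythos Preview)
Your high-level framework is exactly right: density increment on Bohr sets, with Croot--Sisask almost-periodicity supplying the increment, iterated $O(\log(2/\alpha))$ times. You also correctly identify the crux, namely that the per-step rank increase must be $O(\alpha^{-1}\log^{O(1)}(2/\alpha))$ rather than $O(\alpha^{-2})$. But your proposal, as written, does not achieve this, and the missing idea is substantive.

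With your parameters---almost-periodicity applied with $p \asymp \log(2/\alpha)$ and error tolerance $\epsilon \asymp \alpha$---the set of almost-periods lies in a Bohr set of rank $\asymp p\epsilon^{-2} \asymp \alpha^{-2}\log(2/\alpha)$, which is precisely the quadratic loss you flag as the obstacle. The paper resolves this by a genuinely new dichotomy that your sketch does not contain: one applies almost-periodicity to the self-convolution $\mu_A*1_A$ (not to $1_A*\mu_{B''}$), and uses the refined error bound in which the almost-periodicity cost is $\epsilon\,\|\mu_A*1_A\|_{L^m}^{1/2}$ rather than simply $\epsilon$. One then splits into cases according to whether $\|\mu_A*1_A\|_{L^{2m}(B')}$ is at most $10\alpha$ or not. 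In the small case one may take $\epsilon \asymp \alpha^{1/2}$ and still get an $O(\alpha)$ error, so the rank increase is $m\epsilon^{-2} \asymp \alpha^{-1}\log(2/\alpha)$; combined with the few-3APs hypothesis this yields, via H\"older and a pointwise positivity argument, the density increment. In the large case, the bound $\|\mu_A*1_A*\mu_T\|_{L^{2m}} \gg \alpha$ survives the almost-periodicity perturbation (again with $\epsilon \asymp \alpha^{1/2}$), and since $\|\mu_A*1_A*\mu_T\|_\infty \leq \|1_A*\mu_T\|_\infty$ one obtains the increment directly, with no appeal to the 3AP count at all.

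Your description of converting a 3AP deficit into an ``$L^2$-discrepancy of $1_{A-x}*\mu_{B'}$'' is closer in spirit to Sanders's original argument, which required an additional combinatorial thickening step; the paper's route avoids this entirely by working with the $L^{2m}$-norm of $\mu_A*1_A$ as the governing quantity.
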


This immediately implies Theorem \ref{thm:roth}, by embedding a subset of $\{1,\ldots,N\}$ into \mbox{$G = \Z/(2N+1)\Z$} in the natural way, so that a (non-trivial) 3AP found in the set in $G$ is also a (non-trivial) 3AP in the original set.

To prove Theorem \ref{thm:rothCounting}, we employ a density increment strategy following the framework of Roth \cite{Ro:1953}.

\subsection*{Density increments}
Starting with $A \subset G$ of density $\alpha$, we show that if $A$ has few 3APs then there is a structured part $B \subset G$ --- in some cases a genuine subgroup --- such that some translate of $A$ has increased density on $B$:
\[ \mu_B\left( (A-x)\cap B\right) \geq (1+c)\alpha \]
where $c > 0$. Such a condition is succinctly summarised by $\norm{1_A*\mu_B}_\infty \geq (1+c)\alpha$. We then repeat the argument with $G$ replaced by $B$ and $A$ replaced by $A_2 \coloneqq (A-x)\cap B$: if $A_2$ has few 3APs, then we find a new structured piece and a new, denser subset, and repeat the argument. This cannot go on for too long, since the densities can never increase beyond $1$. At this point we will have shown that some translate of $A$ has many 3APs, which by translation-invariance of 3APs implies that $A$ itself does.

\subsection*{Outline of argument}
Finding the structured piece $B$ and the appropriate translate of $A$ relies on an almost-periodicity result for convolutions that says that $1_A*1_A$ is approximately translation-invariant in $L^p$ by something like a large subgroup. How we apply this depends on which of two cases we are in. 
If $\norm{1_A*1_A}_p$ is small, where $p \approx \log(1/\alpha)$, then the $L^{2p}$-almost-periodicity result is particularly efficient, and has as a straightforward consequence that if $T(A)$ deviates much from $\alpha \abs{A}^2$ then it must have a density increment on some subgroup-like object $B$. If, on the other hand, $\norm{1_A*1_A}_p$ is large, then, by $L^p$-almost-periodicity, we see that $\norm{1_A*1_A*\mu_B}_p$ must also be large for some group-like $B$, from which a density increment is immediate.

\subsection*{Asymptotic notation}
We employ both Vinogradov notation $X \ll Y$ and the `constantly changing constant'. Thus, any statement involving one or more expressions of the form $X_i \ll Y_i$ should be considered to mean ``There exist absolute constants $C_i > 0$ such that a true statement is obtained when $X_i \ll Y_i$ is replaced by $X_i \leq C_i Y_i$.'' Similarly, any sequence of statements involving unspecified constants $c, C$ should be read with the understanding that there exist positive constants to make the statements true, and that these constants may change from instance to instance. Generally the expectation will be that $c \leq 1$ and $C \geq 1$, a device intended to guide the reader.

\section{The finite field argument}

As is customary, we begin with a proof in the finite field case, as there are very few technical hurdles here. Our goal is the following density increment result.

\begin{theorem}\label{thm:FF}
If $A\subset \F_q^n$ has density $\alpha$ and $T(A)\leq \tfrac{\alpha}{2} \lvert A\rvert^2$ then there is a subspace $V$ with codimension $\lesssim_\alpha \alpha^{-1}$ such that $\norm{1_A\ast\mu_V}_\infty \geq \tfrac{5}{4}\alpha$.
\end{theorem}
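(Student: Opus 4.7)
The approach is proof by contradiction: suppose $\|1_A * \mu_V\|_\infty < \tfrac{5}{4}\alpha$ for the subspace $V$ we will locate, and derive $T(A) > \tfrac{\alpha}{2}|A|^2$. The subspace $V$ comes from a Croot--Sisask almost-periodicity result applied to $1_A * 1_A$: in the $\F_q^n$ setting, where the approximate-period set can be taken to contain a subspace, with exponent $p \asymp \log(1/\alpha)$ and a small absolute constant $\epsilon > 0$, this yields $V$ of codimension $\lesssim p\epsilon^{-2}\log(1/\alpha) \lesssim \log^2(1/\alpha) \lesssim_\alpha \alpha^{-1}$ such that
\[
\|\tau_v(1_A * 1_A) - 1_A * 1_A\|_{p} \leq \epsilon \|1_A\|_{2}^2 = \epsilon\alpha \quad \text{for every } v \in V.
\]
Setting $g := 1_A * \mu_V$ and averaging over $V$, we get $\|1_A * 1_A - g*g\|_{p} \leq \epsilon\alpha$ (using $\mu_V * \mu_V = \mu_V$). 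Applying H\"older to the identity $T(A) = \langle 1_A * 1_A, 1_{2\cdot A}\rangle$, the smoothed count $\langle g*g, 1_{2\cdot A}\rangle$ lies within $O(\epsilon\alpha^2|G|)$ of $T(A)$ --- negligible next to $\alpha|A|^2 = \alpha^3|G|^2$.

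Under the no-increment assumption $\|g\|_\infty < \tfrac{5}{4}\alpha$, I would next lower-bound the smoothed count directly. Set $h := g - \alpha 1_G$ so that $\sum h = 0$; then
\[
\langle g*g, 1_{2\cdot A}\rangle = \alpha|A|^2 + \langle h*h, 1_{2\cdot A}\rangle,
\]
the cross terms vanishing because $1_G * h \equiv 0$. By Cauchy--Schwarz $\|h*h\|_\infty \leq \|h\|_{\ell^2}^2$, and
\[
\|h\|_{\ell^2}^2 = \|g\|_{\ell^2}^2 - \alpha^2|G| \leq \|g\|_\infty\|g\|_{\ell^1} - \alpha^2|G| \leq \tfrac{1}{4}\alpha^2|G|.
\]
Hence $|\langle h*h, 1_{2\cdot A}\rangle| \leq \tfrac{1}{4}\alpha^2|G|\cdot|A| = \tfrac{1}{4}\alpha|A|^2$, giving $\langle g*g, 1_{2\cdot A}\rangle \geq \tfrac{3}{4}\alpha|A|^2$. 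Combined with the H\"older approximation from the previous paragraph, $T(A) > \tfrac{\alpha}{2}|A|^2$, the desired contradiction.

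The main thing to get right is the quantitative matching between almost-periodicity and H\"older: the parameters $(p, \epsilon)$ must be chosen so that the $L^p$ bound $\epsilon\|1_A\|_2^2$, paired against $\|1_{2\cdot A}\|_{p'} = \alpha^{1/p'}$, yields an error dominated by the $\tfrac{1}{4}\alpha|A|^2$ gap above, while keeping $V$'s codimension within the $\lesssim_\alpha \alpha^{-1}$ budget. In the finite-field setting both constraints fit comfortably --- the codim coming out as $\log^2(1/\alpha)$, much smaller than $\alpha^{-1}$ --- which is why this case is clean; the dichotomy on the size of $\|1_A * 1_A\|_p$ alluded to in the paper's outline should only become essential in the $\Z/N\Z$ argument.
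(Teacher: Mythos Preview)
Your second paragraph is fine: bounding $\langle h*h,1_{2\cdot A}\rangle$ via $\|h\|_{\ell^2}^2\leq\tfrac14\alpha^2|G|$ under the no-increment hypothesis is correct and is essentially the same physical-space device the paper uses in Lemma~\ref{lem-modellow}. The gap is in the first paragraph, and it is quantitative rather than structural.

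Your H\"older error is off by a factor of $|G|$. In any consistent normalisation, the basic Croot--Sisask bound is $\|\mu_A*1_A*\mu_V-\mu_A*1_A\|_{p}\leq\epsilon\|1_A\|_{p}$ (not $\epsilon\|1_A\|_2^2$), and after pairing with $1_{2\cdot A}$ the error in the \emph{count} is
\[
\bigl|T(A)-\langle 1_A*1_A*\mu_V,\,1_{2\cdot A}\rangle\bigr|
\;\leq\;\epsilon\,|A|\cdot\|1_A\|_{\ell^p}\,\|1_{2\cdot A}\|_{\ell^{p'}}
=\epsilon|A|^{2}=\epsilon\alpha^2|G|^2,
\]
not $\epsilon\alpha^2|G|$. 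This is \emph{not} negligible next to $\alpha|A|^2=\alpha^3|G|^2$: you are forced to take $\epsilon\asymp\alpha$, whereupon the codimension becomes $p\epsilon^{-2}\log(1/\alpha)\asymp\alpha^{-2}\log^2(1/\alpha)$, far exceeding the $\lesssim_\alpha\alpha^{-1}$ budget.

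This is exactly why the dichotomy on $\|\mu_A*1_A\|_{2m}$ is already essential over $\F_q^n$, contrary to your closing remark. The paper uses the \emph{refined} almost-periodicity bound of Theorem~\ref{thm:Lp}, whose error is $\epsilon\|\mu_A*1_A\|_{p/2}^{1/2}+\epsilon^2$ rather than $\epsilon\|1_A\|_p$. When $\|\mu_A*1_A\|_{2m}\leq 10\alpha$ this error is $\lesssim\epsilon\alpha^{1/2}$, so one may take $\epsilon\asymp\alpha^{1/2}$ and land at codimension $\lesssim_\alpha\alpha^{-1}$; when $\|\mu_A*1_A\|_{2m}\geq 10\alpha$, the same almost-periodicity gives $\|1_A*\mu_V\|_\infty\geq\|\mu_A*1_A*\mu_V\|_{2m}\geq 5\alpha$ directly. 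Neither branch can be skipped: a single application of almost-periodicity at constant $\epsilon$ cannot close the argument, even in the finite-field model.
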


The notation $X \lesssim_\alpha Y$ here means that $X \ll (\log(2/\alpha))^C\, Y$.

We prove this result by considering two possibilities: $\norm{\mu_A\ast 1_A}_{2m}$ is small for some large $m$, and $\norm{\mu_A\ast 1_A}_{2m}$ is large for some large $m$. It clearly suffices to show that both possibilities (combined with $T(A)\leq \alpha^3/2$) lead to a suitable density increment.

We will require the following almost-periodicity result. While it is not explicitly given in the literature, the deduction from the almost-periodicity results proved by Croot and the second author \cite{CrSi:2010} is routine, and is given in an appendix.

\begin{theorem}\label{thm:Lp}
Let $p \geq 2$ and $\epsilon \in (0,1)$. Let $G = \F_q^n$ be a vector space over a finite field and suppose $A\subset G$ has $\abs{A}\geq \alpha \abs{G}$. Then there is a subspace $V \leq G$ of codimension
\[ d \ll p \epsilon^{-2}\log(2/\epsilon)^2 \log(2/\alpha) \]
such that, for each $t \in V$,
\[ \norm{ \mu_A*1_A\ast \mu_V - \mu_A*1_A }_{p} \leq 
\epsilon\norm{\mu_A*1_A}_{p/2}^{1/2} + \epsilon^{2}. \]
\end{theorem}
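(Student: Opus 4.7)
I plan to derive this as a finite-field specialization of the Croot--Sisask $L^p$-almost-periodicity lemma \cite{CrSi:2010}, via random sampling. Set $f := \mu_A \ast 1_A$ and note that $f(x) = \E_{a \in A}[1_A(x-a)]$, an average of translates of $1_A$. For a parameter $k$ to be chosen, I would sample $\vec a = (a_1, \ldots, a_k) \in A^k$ independently and uniformly and form the empirical proxy $g_{\vec a}(x) := \tfrac{1}{k}\sum_{i=1}^k 1_A(x-a_i)$. Pointwise in $x$, the deviation $g_{\vec a}(x) - f(x)$ is the centred average of $k$ i.i.d.\ $[-1,1]$-valued variables with variance at most $f(x)$, so Rosenthal's inequality (or Marcinkiewicz--Zygmund) gives
\[ \E_{\vec a}\norm{g_{\vec a} - f}_p^p \ll (Cp/k)^{p/2}\norm{f}_{p/2}^{p/2} + (Cp/k)^p. \]
Choosing $k \asymp p\epsilon_1^{-2}$ and applying Markov's inequality, at least half of the tuples $\vec a \in A^k$ then satisfy $\norm{g_{\vec a} - f}_p \ll \epsilon_1\norm{f}_{p/2}^{1/2} + \epsilon_1^2$.

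The next step is to extract almost-periodicity of $f$ from this. Let $L \subset A^k$ be the set of good tuples, so $\abs{L} \geq \abs{A}^k/2$, and project via $\phi(\vec a) := (a_2-a_1, \ldots, a_k-a_1) \in G^{k-1}$. A standard pigeonhole produces a fibre, which identified via the $a_1$-coordinate gives a set $S \subset G$ of density $\geq \delta := (\alpha/2)^k$. For any $s, s' \in S$ with $t := s-s'$, the two tuples in this fibre differ by $(t, \ldots, t)$, so $g_{\vec a'} = \tau_t g_{\vec a}$, and the triangle inequality gives $\norm{\tau_t f - f}_p \ll \epsilon_1\norm{f}_{p/2}^{1/2} + \epsilon_1^2$ for every $t \in T := S-S$.

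Finally, to convert the dense set $T$ of almost-periods into a \emph{subspace}, I would invoke a Bogolyubov--Chang-style argument in the finite-field setting: there exists a subspace $V \leq G$ of codimension $\ll \log_q(1/\delta) \asymp k\log(2/\alpha)$ such that every $v \in V$ lies in an $O(\log(2/\epsilon))$-fold sumset of $T$. Iterating the triangle inequality across these summands costs a factor $O(\log(2/\epsilon))$, which is absorbed by setting $\epsilon_1 := \epsilon/\log(2/\epsilon)$; with the corresponding choice $k = Cp\epsilon^{-2}\log(2/\epsilon)^2$, the codimension becomes $\ll p\epsilon^{-2}\log(2/\epsilon)^2\log(2/\alpha)$ as required, and for every $v \in V$,
\[ \norm{\tau_v f - f}_p \leq \epsilon\norm{f}_{p/2}^{1/2} + \epsilon^2. \]
The triangle inequality applied to $f \ast \mu_V - f = \E_{v \in V}(\tau_v f - f)$ then delivers the stated conclusion.

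The main obstacle is precisely this last subspace-extraction step: naïve Bogolyubov--Chang arguments would lose polynomial factors in $\log(1/\delta)$ and ruin the logarithmic codimension bound. The $\log(2/\epsilon)^2$ factor in the codimension and the additive $\epsilon^2$ term in the conclusion are engineered to absorb, respectively, the $O(\log(2/\epsilon))$-fold iterated-triangle loss in the extraction and the $L^\infty$ tail term produced by Rosenthal's inequality in the sampling step.
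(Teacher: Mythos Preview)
Your sampling and pigeonhole steps are fine and essentially reproduce what the paper black-boxes from \cite{CrSi:2010}. The gap is in the subspace-extraction step. You assert the existence of a subspace $V$ of codimension $\ll \log(1/\delta)$ with $V \subset \ell T$ for $\ell = O(\log(2/\epsilon))$, and then iterate the triangle inequality along a decomposition of each $v \in V$ into $\ell$ summands from $T$. But this containment is not a standard Bogolyubov--Chang statement, and the natural argument does not deliver it: taking $V = \Spec_{1/2}(\mu_S)^\perp$ has the right codimension by Chang's lemma, but showing $V \subset \ell S - \ell S$ via positivity of the $2\ell$-fold convolution requires the off-spectrum tail to be beaten by the main term, which forces $\ell \gtrsim \log(1/\delta)$ rather than $\log(1/\epsilon)$. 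Since $\log(1/\delta) \asymp k\log(2/\alpha)$ and $k$ in turn depends on $\epsilon_1 = \epsilon/\ell$, this is circular and destroys the codimension bound. Your last paragraph acknowledges this obstacle but does not actually resolve it.

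The paper's deduction avoids sumset containment entirely. After obtaining the set $T$ of almost-periods, it sets $\mu = \mu_{T-T}^{(k)}$ with $k \asymp \log(2/\epsilon)$, bounds $\norm{f*\mu - f}_p$ by $k$ applications of the triangle inequality over $T-T$, and then controls the residual $\norm{\tau_t(f*\mu) - f*\mu}_p \leq \norm{\tau_t(f*\mu) - f*\mu}_\infty$ purely on the Fourier side: for $t \in V \coloneqq \Spec_{1/2}(\mu_{T-T})^\perp$, characters in the spectrum give zero contribution (since $\gamma(t)=1$), while the rest are damped by $\abs{\widehat{\mu_{T-T}}}^k \leq 2^{-k}$ and handled by Cauchy--Schwarz and Parseval. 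Chang's lemma then yields $\mathrm{codim}\,V \ll \log(1/\mu_G(T))$. Crucially, $k \asymp \log(2/\epsilon)$ enters through this Fourier decay, not through any inclusion $V \subset kT$ --- that is the mechanism your proposal is missing.
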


\begin{lemma}\label{lem-modellow}
Suppose $A \subset \F_q^n$ has density $\alpha$ and $T(A) \leq \tfrac{\alpha}{2}\lvert A\rvert^2$. If $m \gg \log(2/\alpha)$ is such that
\[ \norm{ \mu_A*1_A }_{2m} \leq 10 \alpha, \]
then there is a subspace $V$ with codimension $\lesssim_\alpha m \alpha^{-1}$ such that $\norm{ 1_A*\mu_V }_\infty \geq \tfrac{5}{4}\alpha $.
\end{lemma}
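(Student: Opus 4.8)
The plan is to combine the hypothesis $\norm{\mu_A*1_A}_{2m}\leq 10\alpha$ with the $L^{2m}$-almost-periodicity result (Theorem~\ref{thm:Lp}) to show that $T(A)$ cannot be much smaller than $\alpha\abs{A}^2$ \emph{unless} $1_A*\mu_V$ already fluctuates, which gives the density increment. Concretely, apply Theorem~\ref{thm:Lp} with $p=2m$ and a parameter $\epsilon$ to be chosen of size roughly $c\alpha$ (so that the codimension bound reads $d\ll m\epsilon^{-2}\log(2/\epsilon)^2\log(2/\alpha)\lesssim_\alpha m\alpha^{-1}$, matching the claimed bound). This produces a subspace $V$ such that $\norm{\mu_A*1_A*\mu_V-\mu_A*1_A}_{2m}\leq \epsilon\norm{\mu_A*1_A}_m^{1/2}+\epsilon^2$. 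Using the hypothesis $\norm{\mu_A*1_A}_{2m}\leq 10\alpha$ (and hence $\norm{\mu_A*1_A}_m\leq 10\alpha$ too, by monotonicity of $L^p$-averages under the compact normalisation — here one must be a little careful, since $\norm{\cdot}_m\le\norm{\cdot}_{2m}$ holds for averages), the right-hand side is $\ll \epsilon\alpha^{1/2}+\epsilon^2\ll\alpha^{3/2}$ with the choice of $\epsilon$ above.

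Next I would convert this $L^{2m}$ closeness into a statement about 3AP counts. Write $T(A)=\langle 1_A*1_A,1_{2\cdot A}\rangle$; since $G=\F_q^n$ has odd order (characteristic $\neq 2$), the map $x\mapsto 2x$ is a bijection, so $T(A)=\abs{A}^3\langle\mu_A*\mu_A*\mu_A\rangle$-type quantities can be manipulated freely. The key identity is that for any $t\in V$,
\[ \langle \mu_A*1_A, \mu_{2\cdot A}(\cdot) \rangle \approx \langle \mu_A*1_A*\mu_V, \mu_{2\cdot A} \rangle \]
up to an error controlled by $\norm{\mu_A*1_A*\mu_V-\mu_A*1_A}_{1}\le\norm{\mu_A*1_A*\mu_V-\mu_A*1_A}_{2m}$ (Hölder, compact normalisation), which we have just bounded by $\ll\alpha^{3/2}$. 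On the other hand, averaging the left side suitably and using translation by $V$, one shows $T(A)/\abs{A}^2$ is within $O(\alpha^{3/2})=o(\alpha)$ of a quantity of the form $\E_{x}(1_A*\mu_V)(x)\cdot(\text{something of average }\alpha)$, which forces $\norm{1_A*\mu_V}_\infty$ to be large if $T(A)\le\tfrac\alpha2\abs{A}^2$. More precisely: if $\norm{1_A*\mu_V}_\infty$ were close to $\alpha$ everywhere on $A-2\cdot A$-relevant translates, then $T(A)$ would be forced to be close to $\alpha\abs{A}^2$, contradicting the hypothesis; the gap between $\tfrac\alpha2$ and $\alpha$ gives room to conclude $\norm{1_A*\mu_V}_\infty\geq\tfrac54\alpha$ after adjusting constants (the constants $10$, $\tfrac12$, $\tfrac54$ are chosen precisely so this works).

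The cleanest way to organize the last step is probably: let $B=(A-x)\cap V$ for the $x$ maximizing $1_A*\mu_V(x)$, and run the standard Roth-type inequality $T(A)\gtrsim$ (mass of $1_A*\mu_V$)$^3$-type bound restricted to cosets of $V$, combined with a Cauchy–Schwarz / Hölder step that replaces $1_A*1_A$ by its $V$-average at the cost of the almost-periodicity error. One subtlety is handling the trivial progressions and the normalisation bookkeeping between $\ell^p$, $L^p$, sums, and measures — this is where most of the routine work lies.

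I expect the main obstacle to be the second step: extracting a genuine $L^\infty$ density increment for $1_A*\mu_V$ from the $L^{2m}$-almost-periodicity statement and the 3AP deficit, rather than merely an $L^2$ or averaged statement. This requires choosing the right bilinear quantity to which to apply almost-periodicity (it must be $T(A)$ itself, expressed via convolutions, not an energy), and carefully exploiting that the error term $\epsilon\norm{\mu_A*1_A}_m^{1/2}+\epsilon^2$ is genuinely smaller than $\alpha$ — which is exactly why the hypothesis $\norm{\mu_A*1_A}_{2m}\leq 10\alpha$ (saying $1_A*1_A$ is "spread out", not concentrated) is indispensable here, in contrast to the large-norm case handled elsewhere.
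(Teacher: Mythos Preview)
Your strategy---apply almost-periodicity, use H\"older to transfer the 3AP deficit to $\langle \mu_A*1_A*\mu_V,\mu_{2\cdot A}\rangle$, then convert this to an $L^\infty$ increment---is exactly the paper's, but each step has a gap as written.

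First, with $\epsilon\approx c\alpha$ the codimension bound from Theorem~\ref{thm:Lp} is $\ll m\epsilon^{-2}\log^2(2/\epsilon)\log(2/\alpha)\approx m\alpha^{-2}\log^3(2/\alpha)$, not $\lesssim_\alpha m\alpha^{-1}$ as you claim; you need $\epsilon\approx c\alpha^{1/2}$. The paper takes $\epsilon=\alpha^{1/2}/100$ and $p=4m$, so that $\epsilon^{-2}\approx\alpha^{-1}$ while the almost-periodicity error is still at most $\alpha/8$ once $\norm{\mu_A*1_A}_{2m}\leq 10\alpha$ is used. Second, your H\"older step is misstated: the error in the inner product against $\mu_{2\cdot A}$ is not $\norm{F}_1$ but $\alpha^{-1}\norm{F}_1$, since $\mu_{2\cdot A}$ lives on a set of density $\alpha$. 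The correct bound pairs $\norm{F}_{4m}$ against $\norm{1_{2\cdot A}}_r$ with $1/r+1/4m=1$, giving an extra factor $\alpha^{-1/4m}$; that this is $O(1)$ is \emph{exactly} where the hypothesis $m\gg\log(2/\alpha)$ enters, and your proposal never invokes it.

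The most substantial gap is the final step, which you leave as a vague sketch. Having established $\langle 1_A*1_A*\mu_V,1_{2\cdot A}\rangle\leq\tfrac34\alpha^3$, one must extract $\norm{1_A*\mu_V}_\infty\geq\tfrac54\alpha$. The paper's device is a clean positivity argument: assuming $\norm{1_A*\mu_V}_\infty\leq(1+c)\alpha$, set $f=(1+c)^{-1}\alpha^{-1}\,1_A*\mu_V\in[0,1]$ and expand the pointwise inequality $(1-f)*(1-f)\geq 0$; since $V$ is a subspace one has $\mu_V*\mu_V=\mu_V$, so this yields $1_A*1_A*\mu_V(x)\geq(1-c^2)\alpha^2$ for \emph{every} $x$, and pairing with $1_{2\cdot A}$ gives $(1-c^2)\alpha^3\leq\tfrac34\alpha^3$, a contradiction at $c=1/4$. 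Your suggested route (``restrict to cosets, run a Roth-type inequality, Cauchy--Schwarz'') would typically produce only an $L^2$ or averaged increment, not a pointwise one, without a positivity input of this kind; this is the missing idea.
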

\begin{proof}
	Apply Theorem \ref{thm:Lp} with $p = 4m$ and $\epsilon = \alpha^{1/2}/100$ to get a subspace $V$ of the required codimension such that
\begin{align*}
 \norm{\mu_A*1_A*\mu_V - \mu_A*1_A }_{4m}
 & \leq \epsilon \norm{\mu_A*1_A}_{2m}^{1/2} + \epsilon^{2}\\
 &\leq \frac{\alpha}{100}\brac{ \alpha^{-1/2}\norm{\mu_A\ast 1_A}_{2m}^{1/2}+1}\\
 &\leq \alpha/8
 \end{align*}
by our assumption on $\norm{\mu_A*1_A}_{2m}$. Now, if $1/r + 1/4m = 1$, H\"older's inequality gives
\begin{align*}
\norm{ \mu_A*1_A*1_{-2\cdot A}*\mu_V - \mu_A*1_A*1_{-2\cdot A} }_\infty &\leq \norm{ 1_{-2\cdot A} }_r \norm{\mu_A*1_A*\mu_V - \mu_A*1_A }_{4m} \\
    &= \alpha^{2-1/4m}/8\\
    &\leq \alpha^2/4.
\end{align*}
Since $\mu_A*1_A*1_{-2\cdot A}(0) \leq \alpha^2/2$ by assumption, this means that 
\[ 1_A*1_A*1_{-2\cdot A}*\mu_V(0)=\langle 1_A\ast 1_A\ast \mu_V, 1_{2\cdot A}\rangle \leq \tfrac{3}{4}\alpha^3. \]
It remains to convert this upper bound on the average into a lower bound for $\norm{1_A\ast \mu_V}_\infty$. There are a number of ways to do this, either in Fourier space or physical space; here we present a particularly short method using purely physical arguments. 

Suppose that $\norm{1_A\ast \mu_V}_\infty\leq (1+c)\alpha$, and let $f=(1+c)^{-1}\alpha^{-1}1_A\ast \mu_V$, so that $0\leq f\leq 1$. In particular, 
\[0\leq (1-f)\ast (1-f)=f\ast f-2\norm{f}_1+1=(1+c)^{-2}\alpha^{-2}1_A\ast 1_A\ast\mu_V-\frac{1-c}{1+c}.\]
It follows that
\[(1-c^2)\alpha^2 \leq 1_A\ast 1_A\ast \mu_V(x)\]
for all $x$. In particular, taking the inner product with $1_{2\cdot A}$ implies
\[(1-c^2)\alpha^3\leq \langle 1_A\ast 1_A\ast \mu_V, 1_{2\cdot A}\rangle\leq \frac{3}{4}\alpha^3,\]
and choosing $c=1/4$, say, gives a contradiction.
\end{proof}

On the other hand, if $\norm{\mu_A\ast 1_A}_{2m}$ is very large, then this directly implies a large density increment, without any assumptions on $T(A)$.

\begin{lemma}
If $\norm{\mu_A*1_A }_{2m} \geq 10\alpha$,
then there is a subspace $V$ of codimension $\lesssim_\alpha m \alpha^{-1}$ such that $\norm{ 1_A*\mu_V }_\infty \geq 5\alpha$.
\end{lemma}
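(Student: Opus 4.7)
The plan is to propagate the lower bound $M := \norm{\mu_A*1_A}_{2m} \geq 10\alpha$ from $\mu_A*1_A$ to $\mu_A*1_A*\mu_V$ via Theorem \ref{thm:Lp}, and then pass to an $\ell^\infty$-bound on $1_A*\mu_V$ through a trivial pointwise comparison. No hypothesis on $T(A)$ is required, which is what makes this direction much cleaner than Lemma \ref{lem-modellow}.

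Concretely, I would apply Theorem \ref{thm:Lp} with $p = 2m$ and $\epsilon = c\alpha^{1/2}$ for a suitable small absolute constant $c > 0$. This yields a subspace $V$ of codimension
\[ d \ll p\epsilon^{-2}\log(2/\epsilon)^2\log(2/\alpha) \lesssim_\alpha m\alpha^{-1}, \]
satisfying
\[ \norm{\mu_A*1_A*\mu_V - \mu_A*1_A}_{2m} \leq \epsilon\norm{\mu_A*1_A}_m^{1/2} + \epsilon^2. \]
Averaged $L^p$-norms are monotone in $p$, so $\norm{\mu_A*1_A}_m \leq \norm{\mu_A*1_A}_{2m} = M$ and the right-hand side is at most $\epsilon M^{1/2} + \epsilon^2$. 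Because $M \geq 10\alpha$, the choice $\epsilon \asymp \alpha^{1/2}$ forces this to be at most, say, $M/2$. The triangle inequality then gives $\norm{\mu_A*1_A*\mu_V}_{2m} \geq M/2 \geq 5\alpha$.

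To finish, I would observe that $\mu_A*1_A*\mu_V(x) = \E_{y \in A}(1_A*\mu_V)(x-y)$ is an average of values of $1_A*\mu_V$, giving the pointwise bound $\abs{\mu_A*1_A*\mu_V(x)} \leq \norm{1_A*\mu_V}_\infty$. Hence
\[ \norm{1_A*\mu_V}_\infty \geq \norm{\mu_A*1_A*\mu_V}_\infty \geq \norm{\mu_A*1_A*\mu_V}_{2m} \geq 5\alpha, \]
as required. I do not expect a real obstacle: the inversion argument using $(1-f)*(1-f)$ from Lemma \ref{lem-modellow} is not needed here, because largeness of $\mu_A*1_A*\mu_V$ already transfers directly to $1_A*\mu_V$ through this trivial averaging bound. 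The only point one must be mildly careful about is the interpolation $\norm{\mu_A*1_A}_m \leq \norm{\mu_A*1_A}_{2m}$, used to avoid having to control the intermediate norm separately.
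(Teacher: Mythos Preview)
Your proposal is correct and matches the paper's proof essentially line for line: apply Theorem~\ref{thm:Lp} with $p=2m$ and $\epsilon\asymp\alpha^{1/2}$, use nesting $\norm{\mu_A*1_A}_m\leq\norm{\mu_A*1_A}_{2m}$, deduce $\norm{\mu_A*1_A*\mu_V}_{2m}\geq 5\alpha$ by the triangle inequality, and finish with the trivial chain $\norm{1_A*\mu_V}_\infty\geq\norm{\mu_A*1_A*\mu_V}_\infty\geq\norm{\mu_A*1_A*\mu_V}_{2m}$. The only cosmetic difference is that you bound the error by $M/2$ directly, whereas the paper writes out the constant $\alpha/100$ explicitly.
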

\begin{proof}
Applying Theorem \ref{thm:Lp} as in the proof of Lemma~\ref{lem-modellow}, but with $p=2m$, there is a subspace $V$ of the required codimension such that
	\[ \norm{\mu_A*1_A*\mu_V - \mu_A*1_A }_{2m} \leq \frac{\alpha}{100}\brac{\alpha^{-1/2}\norm{\mu_A*1_A}_m^{1/2} +1}. \]
	It follows that
\begin{align*}
\norm{\mu_A*1_A*\mu_V}_{2m}
&\geq \norm{\mu_A*1_A}_{2m} - \frac{\alpha}{100}\brac{\alpha^{-1/2}\norm{\mu_A*1_A}_m^{1/2} +1} \\
&\geq \norm{\mu_A*1_A}_{2m} - \frac{\alpha}{100}\brac{\alpha^{-1/2}\norm{\mu_A*\mu_A}_{2m}^{1/2} +1}
\end{align*}
by nesting. Since $\norm{\mu_A*1_A}_{2m} \geq 10\alpha$, this is at least $5\alpha$, say. Hence
\[\norm{1_A\ast \mu_V}_\infty\geq \norm{\mu_A*1_A*\mu_V}_\infty \geq \norm{\mu_A\ast 1_A\ast \mu_V}_{2m}\geq 5\alpha , \]
and we have a density increment.
\end{proof}

The two preceding lemmas together immediately imply Theorem \ref{thm:FF}. A routine iterative application of this theorem then proves the finite field version of Theorem \ref{thm:rothCounting}: we can increase the density as in the theorem at most $C\log(1/\alpha)$ times before reaching $1$, and so a translate of $A$ must have plenty of 3APs on some subspace of codimension $\lesssim_\alpha \alpha^{-1}$.

\section{Bohr sets and $L^p$-almost-periodicity}
Following Bourgain \cite{Bo:1999}, the role played by subspaces in the density increment argument above will in general groups be played by Bohr sets, whose basic theory we review below. For proofs of these results, one may consult \cite{TaVu:2006}. Throughout, $G$ will be a finite abelian group, and we write $\Ghat = \{ \gamma : G \to \C^\times : \text{$\gamma$ a homomorphism}\}$ for the \emph{dual group} of $G$, the group operation being pointwise multiplication of functions. 
\begin{defn}[Bohr sets]
For a subset $\Gamma \subset \Ghat$ and a constant $\rho \geq 0$, we write 
\[ \Bohr(\Gamma, \rho) = \{ x \in G : \abs{\gamma(x)-1} \leq \rho \text{ for all $\gamma \in \Gamma$} \} \]
and call this a \emph{Bohr set}. Denoting it by $B$, we call $\rk(B) \coloneqq \abs{\Gamma}$ the \emph{rank} of $B$ and $\rho$ its radius.\footnote{$\Gamma, \rho$ cannot necessarily be read off from the set itself, but are considered part of the defining data.} 
We shall often need to narrow the radius: if $\tau \geq 0$, we write $B_\tau = \Bohr(\Gamma, \tau \rho)$. If furthermore $B' = \Bohr(\Lambda, \delta)$ where $\Lambda \supset \Gamma$ and $\delta \leq \rho$, then we write $B' \leq B$ and say that $B'$ is a \emph{sub-Bohr set} of $B$; note that this implies that $B' \subset B$ as sets.
\end{defn}

\begin{lemma}[Size estimates]\label{lemma:bohrDilateGrowth}
If $B$ is a Bohr set of rank $d$ and radius $\rho \leq 2$, then
\begin{enumerate}[topsep=0pt]
\item $\abs{B} \geq (\rho/2\pi)^d \abs{G}$,
\item $\abs{B_\tau} \geq (\tau/2)^{3d} \abs{B}$ for $\tau \in [0,1]$.
\end{enumerate}
\end{lemma}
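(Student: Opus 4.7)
Both parts are standard pigeonhole-in-the-torus arguments, based on the map
\[ \phi: G \to (\R/\Z)^d, \qquad \phi(x) = \bigl(\tfrac{1}{2\pi}\arg \gamma_i(x)\bigr)_{i=1}^d, \]
where $\Gamma = \{\gamma_1,\ldots,\gamma_d\}$ enumerates the frequencies defining $B$. The key elementary estimates are $\lvert e^{i\theta}-1\rvert \leq \lvert\theta\rvert$ and $\lvert e^{i\theta}-1\rvert \geq \tfrac{2}{\pi}\lvert\theta\rvert$ for $\theta\in[-\pi,\pi]$; together these let me pass freely between the Bohr condition $\lvert\gamma(x)-1\rvert\leq\rho$ and a metric condition $\lvert\arg\gamma(x)\rvert\lesssim\rho$ on $\phi(x)$, provided $\rho\leq 2$.

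For (i), the plan is to partition $(\R/\Z)^d$ into $N^d$ axis-aligned boxes of side $1/N$, where $N = \lfloor 2\pi/\rho\rfloor$, so that $N^d\leq (2\pi/\rho)^d$. By pigeonhole some box contains at least $\lvert G\rvert/N^d\geq(\rho/2\pi)^d\lvert G\rvert$ elements; fixing any element $x_0$ in this box, every other element $x$ in the same box satisfies $\lvert\arg\gamma_i(x-x_0)\rvert\leq 2\pi/N\leq\rho$, and hence $\lvert\gamma_i(x-x_0)-1\rvert\leq\rho$ for every $i$. Thus $x-x_0\in B$, yielding $\lvert B\rvert\geq(\rho/2\pi)^d\lvert G\rvert$.

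For (ii), I would run the same argument inside $B$ rather than inside $G$. The image $\phi(B)$ is contained in a product of arcs of length $O(\rho)$ (via the second of the elementary bounds above). Cover this product by sub-boxes of side $\tfrac{\tau\rho}{c}$ for a suitable constant $c$; the number of such sub-boxes is at most $(c'/\tau)^d$ for an appropriate $c'$. Pigeonhole then selects a sub-box meeting at least $(\tau/c')^d\lvert B\rvert$ elements of $B$, and the differences of these elements lie in $B_\tau$ by the same $\lvert e^{i\theta}-1\rvert\leq\lvert\theta\rvert$ bound. Being deliberately loose with constants then gives the stated $(\tau/2)^{3d}$.

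The main obstacle is not conceptual but bookkeeping: three sources of slack must be absorbed to reach the clean exponent $3d$---namely the $\pi/2$ factor in the conversion between $\lvert\gamma-1\rvert$ and $\lvert\arg\gamma\rvert$, the integer rounding in the number of boxes, and the factor of $2$ between box \emph{diameter} and box \emph{side}. A convenient way to handle all three uniformly is to build the partition at the coarser scale $\tau\rho/(6\pi)$, which inflates the box count by a constant power and readily yields the cubic-in-$d$ exponent stated in the lemma.
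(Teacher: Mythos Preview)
The paper does not prove this lemma; it states it and refers the reader to Tao--Vu for proofs. Your approach---pigeonhole on the image of $G$ (respectively $B$) under the phase map $\phi$ into $(\R/\Z)^d$---is precisely the standard argument one finds there, so conceptually you are on target.

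There is one small slip in part (i): taking $N = \lfloor 2\pi/\rho \rfloor$ gives $2\pi/N \geq \rho$, which is the wrong inequality; two points in a box of side $1/N$ then have angle difference at most $2\pi/N$, which need not be $\leq \rho$. The clean fix is to replace the discrete partition by an averaging argument: for a cube $Q \subset (\R/\Z)^d$ of side $\rho/(2\pi)$ one has $\int |\phi^{-1}(Q+t)|\, dt = |G|(\rho/2\pi)^d$, so some translate $Q+t$ contains at least $(\rho/2\pi)^d|G|$ points of $\phi(G)$, and differences of these lie in $B$ exactly as you argue. This recovers the stated constant without any rounding loss. Part (ii) is fine as sketched; indeed your covering argument actually yields the sharper bound $|B_\tau| \geq (c\tau)^d|B|$ for some absolute $c>0$, which comfortably implies the cruder $(\tau/2)^{3d}$ stated in the lemma.
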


One deficit of Bohr sets compared to subspaces is that the number of 3APs in a Bohr set $B$ need not be approximately $\abs{B}^2$ --- the trivial upper bound --- as it would be for a subspace. The standard work-around for this is to work with pairs $(B,B')$ of Bohr sets where $B'$ is a radius-narrowed copy of $B$. Provided $B$ is regular, defined as follows, one then has $T(B,B',B) \approx \abs{B}\abs{B'}$, matching the trivial upper bound.

\begin{defn}[Regularity]
We say that a Bohr set $B$ of rank $d$ is regular if 
\[ 1 - 12 d \abs{\tau} \leq \frac{\abs{B_{1+\tau}}}{\abs{B}} \leq 1 + 12 d \abs{\tau} \]
whenever $\abs{\tau} \leq 1/12d$.
\end{defn}

Note in particular that if $B$ is regular, then $\abs{B+B_{c/\rk(B)}} \leq 2\abs{B}$, for example. Importantly, regular Bohr sets are in plentiful supply, a fact that we use frequently:

\begin{lemma}\label{lemma:bohrReg}
If $B$ is a Bohr set, then there is a $\tau \in [\frac{1}{2}, 1]$ for which $B_\tau$ is regular.
\end{lemma}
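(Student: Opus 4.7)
Let $d = \rk(B)$, and study the non-decreasing function $F \colon [1/2,1] \to \R$ given by $F(\tau) = \log |B_\tau|$. By Lemma \ref{lemma:bohrDilateGrowth}(ii) applied with parameter $1/2$, $|B_{1/2}| \geq (1/4)^{3d}|B|$, so the total variation satisfies
\[ F(1) - F(1/2) \leq 6 d \log 2 = O(d). \]
The plan is to show that if $F$ is not ``locally Lipschitz'' at any $\tau \in [1/2,1]$, then its total variation must exceed this bound.

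Suppose for contradiction that $B_\tau$ fails to be regular for every $\tau \in [1/2,1]$. Then for each such $\tau$ there is $\eta_\tau$ with $|\eta_\tau| \leq 1/12d$ for which $|B_{\tau(1+\eta_\tau)}|/|B_\tau|$ falls outside $[1-12d|\eta_\tau|,\,1+12d|\eta_\tau|]$. Since $\tau \mapsto |B_\tau|$ is non-decreasing, two of the four sign sub-cases are ruled out, and in each of the two remaining sub-cases one extracts an interval $I_\tau$ having $\tau$ as an endpoint, with $|I_\tau|/\tau \leq 1/12d$ and
\[ F(\sup I_\tau) - F(\inf I_\tau) \;>\; \log(1+12d|\eta_\tau|) \;\geq\; 6d \cdot |I_\tau|/\tau, \]
using $\log(1+x)\geq x/2$ on $[0,1]$ (and $\log(1/(1-x))\geq x$ for the other sub-case).

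A Vitali-type covering argument then extracts from $\{I_\tau\}_{\tau \in [1/2,1]}$ a disjoint subfamily $\{I_{\tau_j}\}$ whose union covers a fixed positive fraction of $[1/2,1]$; all these intervals lie in, say, $[1/4, 2]$ by the bound on $|\eta_\tau|$. Summing the local increments of $F$ along this disjoint family produces a lower bound of order $d$ for the total variation of $F$ on $[1/4, 2]$, contradicting the $O(d)$ upper bound once the numerical constants are arranged. The desired regular $\tau$ therefore exists.

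\textbf{Expected main obstacle.} The main subtlety is bookkeeping the constants so that the Vitali lower bound strictly beats the total-variation upper bound. The Vitali extraction loses a fixed factor (e.g.\ three or five), so one typically absorbs this loss by first pigeonholing over dyadic ranges of $|\eta_\tau|$ before extracting a disjoint subfamily at a single scale; alternatively, one builds a greedy chain of successive jumps to bypass Vitali altogether. The factor $12$ appearing in the definition of regularity is comfortably larger than the $6\log 2$ coming from Lemma \ref{lemma:bohrDilateGrowth}(ii), and this is precisely what makes the contradiction go through.
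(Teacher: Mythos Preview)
The paper does not give a proof of this lemma; it is one of several basic Bohr-set facts for which the reader is referred to \cite{TaVu:2006}. There is thus nothing in the paper to compare your argument against directly.

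Your plan is the standard one and is correct in outline. The obstacle you flag is the real one, and your suggested fixes are the right ones. In practice the cleanest way to close the constants is to pass to the symmetrised increment: monotonicity of $F$ means that whichever side the failure of regularity occurs on, one still gets
\[ F\big(\tau(1+|\eta_\tau|)\big) - F\big(\tau(1-|\eta_\tau|)\big) \geq 6d\,|\eta_\tau|, \]
so the offending interval may be taken to be \emph{centred} at $\tau$. After the change of variable $u=\log\tau$ these become centred intervals on which the Stieltjes measure $dG$ (with $G(u)=F(e^u)$) has large average, and the weak-$(1,1)$ bound for the Hardy--Littlewood maximal function---equivalently, Vitali with factor $3$ rather than $5$ for centred intervals in one dimension---then gives the contradiction with room to spare against the constant $12$ in the paper's definition of regularity. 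This is essentially how the proof runs in Tao--Vu. Your alternative of a greedy chain also works, but requires first pigeonholing on whether the jump goes left or right, so that the chain can be built monotonically.
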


Let us now assume that $G$ has odd order, so that the map $x \mapsto 2x$ is injective on $G$. The square-root map is then well-defined on $\Ghat$, and we write $\gamma^{1/2}$ for the unique element in $\Ghat$ such that $(\gamma^{1/2})^2 = \gamma$. We extend this to sets via $\Gamma^{1/2} = \{ \gamma^{1/2} : \gamma \in \Gamma \}$.

\begin{defn}[Set-dilation of Bohr sets]
If $B = \Bohr(\Gamma, \rho)$ is a Bohr set, we write $2 \cdot B$ for the Bohr set $\Bohr(\Gamma^{1/2}, \rho)$.
\end{defn}
Note that this is compatible with the notation for set-dilation: $2 \cdot B = \{ 2x : x \in B \}$.

\begin{lemma}\label{lemma:bohrDilates}
If $B$ is a Bohr set and $\tau \geq 0$, then
\[ (2 \cdot B)_\tau = 2 \cdot (B_\tau). \]
In particular, if $B$ is regular, then so is $2\cdot B$.
\end{lemma}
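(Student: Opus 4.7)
The proof will essentially be an unwinding of the definitions, using only the fact that the squaring map $x \mapsto 2x$ on $G$ (equivalently, the squaring map on $\Ghat$) is a bijection because $G$ has odd order.

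For the first assertion, the plan is to compute both sides directly from the definitions. Writing $B = \Bohr(\Gamma, \rho)$, the radius-narrowing operation gives $B_\tau = \Bohr(\Gamma, \tau\rho)$, so $2 \cdot B_\tau = \Bohr(\Gamma^{1/2}, \tau\rho)$ by the definition of set-dilation of Bohr sets. On the other hand $2 \cdot B = \Bohr(\Gamma^{1/2}, \rho)$, whence $(2\cdot B)_\tau = \Bohr(\Gamma^{1/2}, \tau\rho)$. Thus both sides coincide as Bohr sets with the same defining data. To confirm they also coincide as \emph{subsets} of $G$ (justifying the compatibility of the two meanings of ``$2\cdot{}$''), I would note that for any $\gamma \in \Gamma$ and $y \in G$, $\gamma(y) = \gamma^{1/2}(2y)$, so $y \in B_\tau$ iff $2y \in \Bohr(\Gamma^{1/2}, \tau\rho)$; injectivity of $x \mapsto 2x$ then gives $\{2y : y \in B_\tau\} = \Bohr(\Gamma^{1/2}, \tau\rho)$.

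For the ``in particular'' clause, the goal is to check that the regularity inequality for $2\cdot B$ reduces to the one for $B$. First, $\rk(2 \cdot B) = |\Gamma^{1/2}| = |\Gamma| = \rk(B) = d$, since $\gamma \mapsto \gamma^{1/2}$ is a bijection on $\Ghat$ (again by odd order). Second, for any $\tau$ with $|\tau| \leq 1/12d$, the first part gives $(2\cdot B)_{1+\tau} = 2 \cdot B_{1+\tau}$, and since $x \mapsto 2x$ is a bijection of $G$, we have
\[ \frac{|(2\cdot B)_{1+\tau}|}{|2 \cdot B|} = \frac{|2 \cdot B_{1+\tau}|}{|2 \cdot B|} = \frac{|B_{1+\tau}|}{|B|}. \]
The regularity of $B$ then yields the desired two-sided bound $1 - 12d|\tau| \leq |(2\cdot B)_{1+\tau}|/|2\cdot B| \leq 1 + 12d|\tau|$, so $2\cdot B$ is regular.

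There is no real obstacle here; the only subtlety worth flagging is the odd-order hypothesis, which is needed twice: once to make $\Gamma^{1/2}$ well-defined with $|\Gamma^{1/2}| = |\Gamma|$, and once to ensure that dilation by $2$ on $G$ is a bijection preserving cardinalities of subsets.
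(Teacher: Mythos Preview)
Your argument is correct and is exactly the natural unwinding of the definitions; the paper in fact states this lemma without proof, treating it as immediate, so there is nothing to compare against beyond noting that your write-up supplies precisely the routine verification the authors omit.
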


We shall use the following almost-periodicity result for convolutions that works relative to Bohr sets. While it does not explicitly appear in the literature, it is not a far cry from the combination of the almost-periodicity ideas of \cite{CrSi:2010} with the Chang--Sanders lemma on large spectra as in \cite{CrLaSi:2013, ScSi:2016}. The main differences are the presence of an $L^1$-norm (as opposed to an $L^0$-type estimate in \cite{CrSi:2010}) and that the $L^p$-norms are restricted to a Bohr set. We delay the proof of this (and some generalisations) to Section \ref{section:LpMeasures}.

\begin{theorem}[$L^p$-almost-periodicity relative to a Bohr set]\label{thm:LpBohr}
Let $m \geq 1$ and $\epsilon, \delta \in (0,1)$. Let $A, L$ be subsets of a finite abelian group $G$, with $\eta := \abs{A}/\abs{L} \leq 1$, and let $B \subset G$ be a regular Bohr set of rank $d$ and radius $\rho$. Suppose $\abs{A+S} \leq K\abs{A}$ for a subset $S \subset B_\tau$, where $B_\tau$ is regular and $\tau \leq (c\delta)^{2m}/d\log(2/\delta \eta)$. Then there is a regular Bohr set $T \leq B_\tau$ of rank at most $d + d'$ and radius at least $\rho \tau \delta \eta^{1/2}/d^2d'$, where
\[ d' \ll m\epsilon^{-2} \log^2(2/\delta\eta) \log(2K) + \log(1/\mu_{B_\tau}(S)), \]
such that, for each $t \in T$,	
\[ \norm{ \mu_A*1_L(\cdot+t) - \mu_A*1_L }_{L^{2m}(B)} \leq \epsilon \norm{f}_{L^m(B)}^{1/2} + \epsilon^{2-1/m} \norm{f}_{L^1(B)}^{1/2m} + \delta. \]
In particular, 
\[ \norm{ \mu_A*1_L*\mu_T - \mu_A*1_L }_{L^{2m}(B)} \leq \epsilon \norm{f}_{L^m(B)}^{1/2} + \epsilon^{2-1/m} \norm{f}_{L^1(B)}^{1/2m} + \delta. \]
\end{theorem}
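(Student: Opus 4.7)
Define $f := \mu_A * 1_L$. The plan is to run the Croot--Sisask random sampling argument, tuned so that the moment computation produces both the $L^m$ and $L^1$ terms, and then to couple it with a Chang--Sanders-type spectrum extraction to upgrade a set of good translates into a sub-Bohr set of $B_\tau$. Take $k$ of order $m\epsilon^{-2}$, sample $z_1,\dots,z_k$ independently and uniformly from $A$, and set $g_z(x) := \tfrac{1}{k}\sum_i 1_L(x-z_i)$, an unbiased estimator of $f(x)$. For each fixed $x$, the random variable $g_z(x)$ is an average of i.i.d.\ $\{0,1\}$-valued terms with mean $f(x)$, so a Rosenthal-type moment inequality yields
\[
\E_z \abs{g_z(x) - f(x)}^{2m} \ll \brac{\frac{m}{k}}^m f(x)^m + \brac{\frac{m}{k}}^{2m} k\, f(x).
\]
Averaging over $x\in B$, taking $2m$-th roots and using Jensen's inequality gives
\[
\E_z \norm{g_z - f}_{L^{2m}(B)} \ll \epsilon\norm{f}_{L^m(B)}^{1/2} + \epsilon^{2-1/m}\norm{f}_{L^1(B)}^{1/2m},
\]
which is already the analytic heart of the estimate. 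By Markov, a positive proportion of tuples $z\in A^k$ are \emph{good}, meaning they satisfy this bound with a slightly inflated constant.

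The translation identity $g_{z+t}(x) = g_z(x-t)$ is what lets us bootstrap: if both $z$ and $z+t$ are good tuples, then $f$ and $f(\cdot-t)$ are each $L^{2m}(B)$-close to sample averages that differ only by a shift, so the triangle inequality yields the desired bound on $\norm{f(\cdot+t) - f}_{L^{2m}(B)}$. Changing the $L^{2m}$-norm from $B$ to $B+t$ introduces a boundary error, which is absorbed into the additive $\delta$ on the right-hand side using regularity of $B$ together with the standing hypothesis $\tau \leq (c\delta)^{2m}/d\log(2/\delta\eta)$.

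To locate a structured set of such $t$, consider
\[
T_0 := \{t\in S : z+t\in A^k \text{ and } z+t \text{ is good}\}.
\]
A standard averaging argument over good $z$ shows that one may fix $z$ for which $T_0$ has density in $B_\tau$ at least $\mu_{B_\tau}(S)$ up to a polynomial factor in $\delta\eta/K$; here the small-doubling hypothesis $\abs{A+S}\leq K\abs{A}$ enters via a Ruzsa-type covering to ensure the constraints $z_i+t\in A$ are jointly non-restrictive on $S$. Applying a Chang--Sanders-type lemma for large relative spectra inside $B_\tau$ then extracts a regular sub-Bohr set $T\leq B_\tau$ of rank at most $d+d'$ with
\[
d' \ll m\epsilon^{-2}\log^2(2/\delta\eta)\log(2K) + \log(1/\mu_{B_\tau}(S))
\]
and with the claimed radius, such that $T$ lies inside $O(1)$ translates of $T_0-T_0$. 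Chaining two good translates through $T_0$ yields the pointwise almost-periodicity bound for each $t\in T$; the ``in particular'' statement on $\mu_A*1_L*\mu_T - \mu_A*1_L$ then follows from Minkowski applied to $\E_{t\in T}(f(\cdot+t) - f)$.

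The main obstacle is this last step: calibrating the Chang--Sanders extraction so that the rank contribution $d'$, the radius contraction and the small-doubling factor all combine to give exactly the advertised bounds, and simultaneously absorbing the $B\to B+t$ boundary error into the additive $\delta$ via regularity of $B$ at scale $\tau$. The moment computation of the first paragraph is, by contrast, essentially routine once the correct form of Rosenthal's inequality is in hand.
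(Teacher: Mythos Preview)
Your sampling and moment computation are essentially correct and match the paper (Lemma~\ref{lemma:probsampMeasure}), as does the plan to absorb the $B \to B+t$ boundary error via regularity (this is Lemma~\ref{lemma:LpReg}). The genuine gap is the step you yourself flag as the obstacle: the assertion that a ``Chang--Sanders-type lemma'' produces a sub-Bohr set $T \leq B_\tau$ lying inside $O(1)$ translates of $T_0 - T_0$. That is not what Chang--Sanders gives. The local lemma (Lemma~\ref{lemma:localChang}) takes $T_0 \subset B_\tau$ and returns a sub-Bohr set on which every $\gamma \in \Spec_{1/2}(\mu_{T_0})$ is nearly trivial; it makes no claim whatsoever that this Bohr set is contained in $T_0 - T_0$, in a bounded union of its translates, or in any bounded iterated sumset of $T_0$. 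A containment of that kind would be a local Bogolyubov statement, which is a different and substantially harder input. So your ``chaining two good translates through $T_0$'' has nothing to chain through for a generic $t \in T$.

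The paper's mechanism is genuinely different and uses Fourier analysis at exactly this point. One samples at accuracy $\epsilon/2n$ with $n \asymp \log(2/\delta\eta)$, so that the almost-periodicity bound holds throughout $nT_0 - nT_0$; setting $\sigma = \mu_{T_0}^{(n)}*\mu_{-T_0}^{(n)}$ then gives $\norm{g*\sigma - g}_{L^{2m}}$ small for $g = \mu_A*1_L$. For \emph{arbitrary} $t$ in the ambient Bohr set one writes $\norm{\tau_t g - g} \leq 2\epsilon' + \norm{\tau_t(g*\sigma) - g*\sigma}_\infty$ and bounds the $L^\infty$ term by Fourier inversion: on the Chang--Sanders sub-Bohr set, characters in $\Spec_{1/2}(\mu_{T_0})$ contribute at most $\delta\eta^{1/2}$ through $\abs{\gamma(t)-1}$, while characters outside contribute at most $(1/2)^{2n} \leq \delta\eta^{1/2}$ through $\abs{\widehat{\mu_{T_0}}(\gamma)}^{2n}$; Cauchy--Schwarz and Parseval then close to $\delta$. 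The price of sampling at scale $\epsilon/n$ is an $n^2$ in the exponent, and this is exactly the source of the $\log^2(2/\delta\eta)$ in the rank bound $d'$. Your sketch, with $k \asymp m\epsilon^{-2}$ and only $O(1)$ chaining, has no step that could generate that factor --- another sign that the route you describe is not the one that delivers the stated conclusion.
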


\section{The main argument}
We can now describe the main argument. As mentioned in the previous section, we shall work with a pair $(B, B')$ of Bohr sets, regularity ensuring that $B+B' \approx B$. We shall correspondingly have a pair $(A, A')$ of sets, with $A \subset B$ and $2\cdot A' \subset B'$, each of relative density at least $\alpha$. There will then be two cases:

\begin{itemize}[leftmargin=2em]
\item If $\norm{ \mu_A*1_A }_{L^{2m}(B')} \geq 10 \alpha$, then we apply $L^{2m}(B')$-almost-periodicity to get that $\norm{ \mu_A*1_A*\mu_T }_{L^{2m}(B')}$ is large for some Bohr set $T$, from which a density increment is immediate.
\item If $\norm{ \mu_A*1_A }_{L^{2m}(B')} \leq 10 \alpha$, then the $L^{4m}(B')$-almost-periodicity result is particularly efficient, giving a large Bohr set $B$ such that $\langle \mu_A*1_A*\mu_T, \mu_{2\cdot A'} \rangle \approx \langle \mu_A*1_A, \mu_{2\cdot A'} \rangle$. Assuming that the number of 3APs across $(A,A',A)$ is small, say $\langle \mu_A*1_A, \mu_{2\cdot A'} \rangle \leq \tfrac{1}{4}\alpha$, this tells us that the same thing is true with an extra convolution with $\mu_T$, which quickly leads to a density increment.
\end{itemize}

\subsection*{Large $L^p$-norm of convolution implies density increment}
Here we expand upon the first case above, namely the one in which
\[ \norm{ \mu_A*1_A }_{L^{2m}(B')} \geq 10 \alpha. \]

\begin{proposition}
Let $G$ be a finite abelian group of odd order, let $B \subset G$ be a regular Bohr set, and let $B' \leq 2\cdot B$ be regular of rank $d$ and radius $\rho$. If $A \subset B$ is a set of relative density at least $\alpha$ with
\[ \norm{ \mu_A*1_A }_{L^{2m}(B')} \geq 10 \alpha \]
for some $m \in \N$, then there is a regular Bohr set $T \leq B'$ of rank at most $d+d'$ and radius at least $\rho \alpha^{Cm}/d^3$, where $d' \ll m \alpha^{-1} \log(2/\alpha)^3$, such that $\norm{ 1_A*\mu_T }_\infty \geq 5 \alpha$.
\end{proposition}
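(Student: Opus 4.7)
The plan is to mirror the ``large-norm'' finite-field lemma (the unlabelled one immediately following Lemma \ref{lem-modellow}), with Theorem \ref{thm:LpBohr} playing the role of Theorem \ref{thm:Lp}. I would apply Theorem \ref{thm:LpBohr} with the ambient Bohr set taken to be $B'$, with $L = A$ (so $\eta = 1$), with $p = 2m$, and with small parameters $\epsilon = c\alpha^{1/2}$, $\delta = c\alpha$ for a sufficiently small absolute constant $c>0$. For the auxiliary set $S$ I would take $S = B'_\tau$ itself, killing off the $\log(1/\mu_{B'_\tau}(S))$ contribution to the bound on $d'$.

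The one delicate point, and the only place the hypothesis $B' \leq 2\cdot B$ is used, will be the doubling bound $\abs{A+S}\leq K\abs{A}$ with $K\leq 2/\alpha$. The key observation is that $B'\leq 2\cdot B$ combined with the character estimate $\abs{\gamma(x)-1}\leq 2\abs{\gamma^{1/2}(x)-1}$ (since $\abs{z+1}\leq 2$ for $\abs{z}=1$) gives $B'_\tau\subset B_{2\tau}$. The theorem's constraint $\tau\leq(c\delta)^{2m}/(d\log(2/\delta))$ then automatically yields $\rk(B)\cdot\tau\ll 1$ (since $\rk(B)\leq d$), so the regularity of $B$ gives $\abs{B+B'_\tau}\leq\abs{B_{1+2\tau}}\leq 2\abs{B}\leq 2\alpha^{-1}\abs{A}$, and hence $K\leq 2/\alpha$ as claimed. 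Picking $\tau$ regular via Lemma \ref{lemma:bohrReg} within a factor of $2$ of its allowed upper bound and tracking the quantities in Theorem \ref{thm:LpBohr}, I would then obtain a regular $T\leq B'_\tau\leq B'$ of rank at most $d+d'$ with $d'\ll m\alpha^{-1}\log^3(2/\alpha)$, and radius at least $\rho\alpha^{Cm}/d^3$, after absorbing factors of $m$ and $\log(2/\alpha)$ into the $\alpha^{Cm}$.

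With these parameters the almost-periodicity conclusion reads
\[ \norm{\mu_A*1_A*\mu_T-\mu_A*1_A}_{L^{2m}(B')}\leq\epsilon\norm{f}_{L^m(B')}^{1/2}+\epsilon^{2-1/m}\norm{f}_{L^1(B')}^{1/2m}+\delta, \]
where $f=\mu_A*1_A$. Writing $X=\norm{f}_{L^{2m}(B')}$, the $L^p$-nesting $\norm{f}_{L^1(B')}\leq\norm{f}_{L^m(B')}\leq X$ on the probability space $B'$, together with an AM-GM-style bound such as $\epsilon X^{1/2}\leq\epsilon^2+X/4$, bounds the right-hand side by $X/4+O(\alpha)$. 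Since $X\geq 10\alpha$ by hypothesis, this gives $\norm{\mu_A*1_A*\mu_T}_{L^{2m}(B')}\geq 3X/4-O(\alpha)\geq 5\alpha$ once $c$ is small enough. Finally, combining $\norm{g}_{L^{2m}(B')}\leq\norm{g}_\infty$ with the identity $\mu_A*1_A*\mu_T=\mu_A*(1_A*\mu_T)$ and the fact that $\mu_A$ is a probability measure gives $\norm{1_A*\mu_T}_\infty\geq\norm{\mu_A*1_A*\mu_T}_\infty\geq 5\alpha$, which is the desired density increment. The main hurdle is really this $K$-estimate via the embedding $B'\leq 2\cdot B$; the rest is parameter-chasing through Theorem \ref{thm:LpBohr}.
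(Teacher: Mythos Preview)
Your proposal is correct and follows essentially the same route as the paper: apply Theorem~\ref{thm:LpBohr} to $\mu_A*1_A$ on the Bohr set $B'$ with $\epsilon=c\alpha^{1/2}$, $\delta=c\alpha$, $S=B'_\tau$, obtain $K\leq 2/\alpha$ via $B'_\tau\subset B_{2\tau}$ and regularity of $B$, and then use the triangle inequality plus nesting to pass to $\norm{1_A*\mu_T}_\infty$. The one place you diverge slightly from the paper is in handling the $\norm{f}_{L^1(B')}$ term: the paper short-circuits by observing that $\norm{\mu_A*1_A}_{L^1(B')}=\mu_A*1_A*\mu_{B'}(0)\leq\norm{1_A*\mu_{B'}}_\infty$, so if this is at least $5\alpha$ one may simply take $T=B'$; otherwise one has the clean numerical bound $\norm{f}_{L^1(B')}<5\alpha$ to feed into the estimate. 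Your alternative of bounding $\norm{f}_{L^1(B')}\leq X$ and using weighted AM--GM on $\epsilon^{2-1/m}X^{1/2m}=c^{2-1/m}\alpha^{(2m-1)/2m}X^{1/2m}$ also works, but note it produces a small $O(cX)$ contribution in addition to $O(c\alpha)$, so the bound is really $X/4+O(cX)+O(c\alpha)$ rather than $X/4+O(\alpha)$; this still leaves $\norm{\mu_A*1_A*\mu_T}_{L^{2m}(B')}\geq (3/4-O(c))X-O(c\alpha)\geq 5\alpha$ for $c$ small enough. One cosmetic point: in Theorem~\ref{thm:LpBohr} the function $f$ is $\mu_A*1_L(1-\mu_A*1_L)$, not $\mu_A*1_A$ itself, but since $f\leq\mu_A*1_A$ pointwise your bounds are valid.
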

\begin{proof}
Let $\epsilon = c\alpha^{1/2}$, $\delta = c\alpha$ and apply Theorem \ref{thm:LpBohr} with these parameters to the convolution $\mu_A*1_A$, with the Bohr set $B'$ in place of $B$, and $\tau = (c\alpha)^{Cm}/d$ chosen so that $S := B'_\tau$ is regular. We then have that
\[ \abs{A+S} \leq \abs{B + B'_\tau} \leq \abs{B + B_{2\tau}} \leq \abs{B_{1+2\tau}} \leq 2\abs{B} \leq \tfrac{2}{\alpha}\abs{A}, \]
by Lemma \ref{lemma:bohrDilates} and regularity, allowing us to take $K = 2/\alpha$. This gives us a Bohr set $T \leq B'$ of the required rank and radius such that
\[ \norm{ \mu_A*1_A*\mu_T - \mu_A*1_A }_{L^{2m}(B')} \leq \epsilon \norm{\mu_A*1_A}_{L^m(B')}^{1/2} + \epsilon^{2-1/m} \norm{\mu_A*1_A}_{L^1(B')}^{1/2m} + \delta. \]
Now, we may assume that $\norm{\mu_A*1_A}_{L^1(B')} = \mu_A*1_A*\mu_{B'}(0) < 5\alpha$, as otherwise we are done (with $T = B'$). Thus
\[ \norm{ \mu_A*1_A*\mu_T }_{L^{2m}(B')} \geq \norm{ \mu_A*1_A }_{L^{2m}(B')} - \epsilon \norm{\mu_A*1_A}_{L^m(B')}^{1/2} - \epsilon^{2-1/m} (5\alpha)^{1/2m} - \delta. \]
By nesting of $L^p$-norms, the right-hand side here is at least
\begin{align*}
\norm{ \mu_A*1_A }_{L^{2m}(B')}^{1/2}&\left( \norm{ \mu_A*1_A }_{L^{2m}(B')}^{1/2} - \epsilon \right) - \epsilon^2 (5\alpha/\epsilon^2)^{1/2m} - \delta \\
&\geq (10 - c\sqrt{10} - c\sqrt{5} - c)\alpha,
\end{align*}
by our choice of $\epsilon$ and $\delta$. Thus, provided the constants in these parameters are chosen appropriately, we are done, as $\norm{\mu_A*1_A*\mu_T}_{L^{2m}(B')} \leq \norm{1_A*\mu_T}_\infty$.
\end{proof}

\subsection*{Small $L^p$-norm of convolution and few 3APs implies density increment}

Here we expand upon how to argue in the case 
\[ \norm{ \mu_A*1_A }_{L^{2m}(B')} \leq 10 \alpha. \]

\begin{proposition}
Let $G$ be a finite abelian group of odd order, let $B \subset G$ be a regular Bohr set, and let $B'$ be a regular Bohr set of rank $d$ and radius $\rho$ with $B' \subset B_{c/\rk(B)}$. Let $A \subset B$ and $2 \cdot A' \subset B'$ be sets of relative densities at least $\alpha$. If
\[ \norm{ \mu_A*1_A }_{L^{2m}(B')} \leq 10 \alpha \]
for some $m \geq C \log(2/\alpha)$, then either
\begin{enumerate}[leftmargin=2em,topsep=0pt]
\item (Many 3APs) $T(A,A',A) \geq \tfrac{1}{4} \alpha \abs{A}\abs{A'}$, or
\item (Density increment) there is a regular Bohr set $T \leq B'$ of rank at most $d+Cm\alpha^{-1} \log(2/\alpha)^3$, and radius at least $c\rho \alpha^{Cm}/d^3$, such that $\norm{ 1_A*\mu_T }_\infty \geq \tfrac{3}{2} \alpha$.
\end{enumerate}
\end{proposition}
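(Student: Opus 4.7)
The plan is to follow the finite-field proof of Lemma \ref{lem-modellow}, replacing the subspace there by a Bohr set $T$ produced by Theorem \ref{thm:LpBohr}. The outline is: use $L^{4m}(B')$-almost-periodicity to show that $\langle \mu_A*1_A*\mu_T*\mu_T,\mu_{2\cdot A'}\rangle$ is close to $\langle \mu_A*1_A,\mu_{2\cdot A'}\rangle = T(A,A',A)/(\abs{A}\abs{A'})$, and then close the loop with the physical-space identity $(1-f)*(1-f)\geq 0$ to force the density increment.

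First, I would apply Theorem \ref{thm:LpBohr} with $m$ replaced by $2m$ (to get an $L^{4m}$-bound), $\epsilon = c\alpha^{1/2}$, $\delta = c\alpha$, $L = A$ (so $\eta = 1$), and with $B'$ in the role of $B$. Choosing $S = B'_\tau$ for the largest admissible $\tau$ with $B'_\tau$ regular, the hypothesis $B' \subset B_{c/\rk(B)}$ together with regularity of $B$ gives $\abs{A+S} \leq \abs{B_{1+2\tau}} \leq (2/\alpha)\abs{A}$, so one may take $K = 2/\alpha$. Plugging in $\norm{\mu_A*1_A}_{L^{2m}(B')} \leq 10\alpha$ and (by nesting) $\norm{\mu_A*1_A}_{L^1(B')} \leq 10\alpha$, and using $m \geq C\log(2/\alpha)$ to tame the $\epsilon^{2-1/(2m)}\norm{\mu_A*1_A}_{L^1(B')}^{1/(4m)}$ term, yields a regular Bohr set $T$ of the rank and radius promised in the statement, satisfying
\[ \norm{\mu_A*1_A*\mu_T - \mu_A*1_A}_{L^{4m}(B')} \leq c_1\alpha, \]
with $c_1$ as small as we please.

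Second, because the physical-space argument needs $\mu_T*\mu_T$ rather than $\mu_T$, I would upgrade this to
\[ \norm{\mu_A*1_A*\mu_T*\mu_T - \mu_A*1_A}_{L^{4m}(B')} \leq c_2\alpha. \]
This follows by writing the difference as $\E_{t_1, t_2 \in T}[\mu_A*1_A(\cdot - t_1 - t_2) - \mu_A*1_A(\cdot)]$ and applying the triangle inequality to split into a shift by $t_2$ (bounded directly by almost-periodicity) and a subsequent shift by $t_1$ after the change of variable $y = x - t_2$, where regularity of $B'$ absorbs the $t_2$-shift into the $L^{4m}(B')$-norm with negligible loss (using $T \subset B'_\tau$ with $\tau \ll 1/\rk(B')$). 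Pairing against $\mu_{2\cdot A'}$ by H\"older's inequality on $B'$ with exponents $(4m, \tfrac{4m}{4m-1})$, and using that $2\cdot A' \subset B'$ together with $m\gg \log(2/\alpha)$ to bound $\norm{\mu_{2\cdot A'}}_{L^{(4m)/(4m-1)}(B')} \leq 2/\abs{B'}$, then gives $\abs{\langle \mu_A*1_A*\mu_T*\mu_T - \mu_A*1_A, \mu_{2\cdot A'}\rangle} \leq c_3\alpha$.

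If case (i) fails, then $\langle \mu_A*1_A, \mu_{2\cdot A'}\rangle < \alpha/4$, so the previous bound rescales to $\langle 1_A*1_A*\mu_T*\mu_T, 1_{2\cdot A'}\rangle < \tfrac12\alpha\abs{A}\abs{A'}$. Suppose for contradiction that case (ii) also fails, i.e.~$\norm{1_A*\mu_T}_\infty < \tfrac32\alpha$, and set $f = \tfrac{2}{3\alpha}1_A*\mu_T$, so $0 \leq f \leq 1$. The pointwise inequality $(1-f)*(1-f)(x) \geq 0$ expands (in the summation convention) to $(f*f)(x) \geq 2\sum f - \abs{G} = \tfrac13\abs{G}$, which after rescaling becomes $(1_A*1_A*\mu_T*\mu_T)(x) \geq \tfrac34\alpha\abs{A}$ for every $x$. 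Pairing with $1_{2\cdot A'}$ then gives $\langle 1_A*1_A*\mu_T*\mu_T, 1_{2\cdot A'}\rangle \geq \tfrac34\alpha\abs{A}\abs{A'}$, contradicting the upper bound. The main obstacle is precisely the upgrade from $\mu_T$ to $\mu_T*\mu_T$ in the second step: for a Bohr set $T$ one has $\mu_T*\mu_T \neq \mu_T$, unlike the subspace case of Lemma \ref{lem-modellow}, so the triangle inequality must be combined with regularity of $B'$ to bridge this gap.
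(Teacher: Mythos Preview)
Your steps (1)--(4) are essentially the paper's argument: apply Theorem~\ref{thm:LpBohr} at level $4m$, upgrade to $\mu_T*\mu_T$, pair against $\mu_{2\cdot A'}$ via H\"older, and conclude that $\langle \mu_A*1_A*\mu_T*\mu_T, \mu_{2\cdot A'}\rangle \leq \tfrac12\alpha$ when case~(i) fails. (For the upgrade, the paper simply invokes the ``for each $t\in T$'' conclusion of Theorem~\ref{thm:LpBohr} twice; your regularity argument works too.)

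The genuine gap is in step~(5). Your computation $2\sum f - \abs{G} = \tfrac13\abs{G}$ assumes $\abs{A} = \alpha\abs{G}$, but here $A$ has density $\alpha$ only in the Bohr set $B$, and $\abs{B}$ may be tiny compared with $\abs{G}$. So $(1-f)*(1-f)\geq 0$ with the constant function $1$ on $G$ gives nothing: the quantity $2\sum f - \abs{G}$ is typically hugely negative. Consequently your claimed lower bound $1_A*1_A*\mu_T*\mu_T(x) \geq \tfrac34\alpha\abs{A}$ for \emph{every} $x$ cannot hold (indeed this convolution is supported on $B+B+T+T$, a small set).

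The fix, carried out in the lemma immediately following the proposition in the paper, is to relativise the physical-space identity to $B$: since $A\subset B$ and $T\subset B_\tau$ one has $0 \leq F := \tfrac{1}{(1+\lambda)\alpha}1_A*\mu_T \leq 1_{B_{1+\tau}}$, and one expands
\[
0 \leq (1_{B_{1+\tau}} - F)*(1_{B_{1+\tau}} - F)(x)
\]
instead. The cross term $F*1_{B_{1+\tau}}(x)$ is then estimated via regularity of $B$ for $x\in B_\tau$ (this is why one needs $x\in 2\cdot A' \subset B' \subset B_{c/\rk(B)}$, not arbitrary $x$), yielding $\mu_A*1_A*\mu_T*\mu_T(x) \geq (1-2\lambda^2)\alpha$ under the assumption $\norm{1_A*\mu_T}_\infty \leq (1+\lambda)\alpha$. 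Taking $\lambda = \tfrac12$ gives the contradiction.
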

\begin{proof}
Either we are in the first case of the proposition, or 
\[ \langle \mu_A*1_A, \mu_{2\cdot A'} \rangle \leq \tfrac{1}{4} \alpha. \]
We now apply Theorem \ref{thm:LpBohr} to $\mu_A*1_A$ with parameters $2m$, $\epsilon = c\alpha^{1/2}$, $\delta = c \alpha$, the Bohr set $B'$ in place of $B$, and $S = B'_\tau$ with $\tau = (c\alpha)^{Cm}/d$, giving us a Bohr set $T \leq B'_\tau$ of the required rank and radius such that 
\[ \norm{ \mu_A*1_A*\mu_T*\mu_T - \mu_A*1_A }_{L^{4m}(B')} \leq \epsilon \norm{ \mu_A*1_A }_{L^{2m}(B')}^{1/2} + \epsilon^{2-1/2m} \norm{ \mu_A*1_A }_{L^1(B')}^{1/4m} + \delta. \]
By assumption and choice of parameters, and assuming that $\norm{\mu_A*1_A}_{L^1(B')} \leq \tfrac{3}{2} \alpha$ (or else increment) as in the previous argument, we thus have that
\[ \norm{ \mu_A*1_A*\mu_T*\mu_T - \mu_A*1_A }_{L^{4m}(B')} \leq c \alpha, \]
where the positive constant $c$ may be chosen as small as we wish. Thus, letting $q$ be such that $1/q + 1/4m = 1$, H\"older's inequality yields
\begin{align*}
\abs{ \langle \mu_A*1_A*\mu_T*\mu_T, \mu_{2\cdot A'} \rangle  -&  \langle \mu_A*1_A, \mu_{2\cdot A'} \rangle } \\
&\leq \tfrac{1}{\mu_{B'}(2\cdot A')} \norm{ 1_{2\cdot A'} }_{L^q(B')} \norm{ \mu_A*1_A*\mu_T*\mu_T - \mu_A*1_A }_{L^{4m}(B')} \\
&\leq \mu_{B'}(2\cdot A')^{-1/4m} c \alpha \\
&\leq c \alpha^{1-1/4m}.
\end{align*}
Since $m \geq C \log(2/\alpha)$, this is at most $2c \alpha$. Picking $c$ small enough thus gives that 
\[ \langle \mu_A*1_A*\mu_T*\mu_T, \mu_{2\cdot A'} \rangle \leq \tfrac{1}{2} \alpha. \]
There is thus some $x \in 2\cdot A' \subset B' \subset B_{c/\rk(B)}$ such that 
\[ \mu_A*1_A*\mu_T*\mu_T(x) \leq \tfrac{1}{2}\alpha. \]
We are then done by the following lemma.
\end{proof}

\begin{lemma}
Let $B \subset G$ be a regular Bohr set and let $A \subset B$ be a set of relative density $\alpha> 0$. Let $\lambda \in [0,1]$, and suppose $T \subset B_\tau$ where $\tau \ll \lambda^2/\rk(B)$. If
\[ \mu_A*1_A*\mu_T*\mu_T(x) \leq (1-2\lambda^2)\alpha \]
for some $x \in B_\tau$, then $\norm{1_A*\mu_T}_\infty \geq (1+\lambda)\alpha$.
\end{lemma}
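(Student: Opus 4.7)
My plan is to prove the contrapositive, along the lines of the physical-space positivity trick used in Lemma~\ref{lem-modellow}, but localised to a Bohr set. Set $h := 1_A*\mu_T$, so that the identity $\mu_A*1_A*\mu_T*\mu_T = (h*h)/|A|$ rewrites the hypothesis as $h*h(x) \le (1-2\lambda^2)\alpha|A|$ for some $x \in B_\tau$. I will show that if $\norm{h}_\infty \le M := (1+\lambda)\alpha$, then $h*h(y) \ge (1-2\lambda^2)\alpha|A|$ for every $y \in B_\tau$; the strict version of the implication in the lemma then follows by taking $M$ slightly smaller than $(1+\lambda)\alpha$ in the usual way.

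The key observation is that $h$ is supported on $A + T \subset B + B_\tau \subset B_{1+\tau}$, by the triangle inequality $|\gamma(a+b) - 1| \le |\gamma(a)-1| + |\gamma(b)-1|$ for characters. Consequently,
\[ \phi := M\cdot 1_{B_{1+\tau}} - h \ge 0 \]
pointwise on $G$, so $\phi*\phi(y) \ge 0$ for every $y$. Expanding and rearranging,
\[ h*h(y) \ge 2M\,(h*1_{B_{1+\tau}})(y) - M^2\,(1_{B_{1+\tau}}*1_{B_{1+\tau}})(y). \]
This is the Bohr-set analogue of the $(1-f)*(1-f) \ge 0$ computation in the proof of Lemma~\ref{lem-modellow}: the essential point is to replace the constant function $1$ on $G$ by $1_{B_{1+\tau}}$, so that the resulting estimate involves $|B|$ rather than the unmanageable $|G|$.

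To finish, I estimate the two convolutions using regularity of $B$. The second is immediate: $(1_{B_{1+\tau}}*1_{B_{1+\tau}})(y) \le |B_{1+\tau}| \le (1+12d\tau)|B|$, where $d = \rk(B)$. For the first, for $y \in B_\tau$ and $z \in B_{1-\tau}$ the triangle inequality gives $y-z \in B \subset B_{1+\tau}$, and hence $B_{1-\tau} \subset y - B_{1+\tau}$. Since $\norm{h}_1 = |A|$, $\norm{h}_\infty \le M$ and $\mathrm{supp}(h) \subset B_{1+\tau}$, the mass of $h$ outside $y - B_{1+\tau}$ is therefore at most $M(|B_{1+\tau}| - |B_{1-\tau}|) \le 24d\tau M|B|$, by regularity. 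Plugging both bounds back in, with $M = (1+\lambda)\alpha$ and $|A| = \alpha|B|$, a short computation produces
\[ h*h(y) \ge \alpha^2|B|\bigl[(1+\lambda)(1-\lambda) - O(d\tau)\bigr] = \alpha^2|B|\bigl[(1-\lambda^2) - O(d\tau)\bigr]. \]
The hypothesis $\tau \ll \lambda^2/\rk(B)$, with a small enough implicit constant, absorbs the $O(d\tau)$ into $\lambda^2$ and leaves $h*h(y) \ge (1-2\lambda^2)\alpha|A|$, as required. There is no real conceptual obstacle: the whole argument is a piece of careful but routine Bohr-set bookkeeping around the one-line inequality $\phi*\phi \ge 0$, with the hypotheses calibrated precisely to guarantee both that $h$ is concentrated on $B_{1+\tau}$ (so that the positivity trick localises) and that the regularity errors in the size estimates are of order $d\tau \ll \lambda^2$.
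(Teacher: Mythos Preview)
Your proposal is correct and essentially identical to the paper's proof: both argue the contrapositive via the pointwise positivity of $(M\,1_{B_{1+\tau}} - 1_A*\mu_T)*(M\,1_{B_{1+\tau}} - 1_A*\mu_T)$ (the paper normalises by $M$ and calls this $F$), then use regularity of $B$ to control $h*1_{B_{1+\tau}}(x)$ and $1_{B_{1+\tau}}*1_{B_{1+\tau}}(x)$ for $x \in B_\tau$. The only cosmetic difference is that the paper bounds $h*1_{B_{1+\tau}}(x)$ by comparing it to $h*1_{B_{1+\tau}}(0) = |A|$ via the standard $\sum_y |1_{B_{1+\tau}}(y-x)-1_{B_{1+\tau}}(y)| \le |B_{1+2\tau}\setminus B|$ estimate, whereas you directly bound the mass of $h$ outside $x - B_{1+\tau}$ using $B_{1-\tau} \subset x - B_{1+\tau}$; these are equivalent uses of regularity.
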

\begin{proof}
Suppose $\norm{1_A*\mu_T}_\infty \leq (1+\lambda)\alpha$. Let $F = \frac{1_A*\mu_T}{(1+\lambda)\alpha}$, so that $0 \leq F \leq 1_{B_{1+\tau}}$. In particular, we have the pointwise inequality
\[ 0 \leq (1_{B_{1+\tau}} - F)*(1_{B_{1+\tau}} - F) = F*F - 2 F*1_{B_{1+\tau}} + 1_{B_{1+\tau}}*1_{B_{1+\tau}}. \]
Thus
\begin{equation}\label{eqn:convptwise}
F*F(x) \geq 2 F*1_{B_{1+\tau}}(x) - 1_{B_{1+\tau}}*1_{B_{1+\tau}}(x)
\end{equation}
for every $x$. We now use regularity to estimate the right-hand side for $x \in B_\tau$. Indeed,
\[ \abs{F*1_{B_{1+\tau}}(x) - F*1_{B_{1+\tau}}(0)} \leq \norm{F}_\infty \sum_y \abs{1_{B_{1+\tau}}(y-x) - 1_{B_{1+\tau}}(y) } \leq \abs{B_{1+2\tau}\setminus B} \ll \tau d \abs{B}, \]
where $d := \rk(B)$, since $B$ is regular, and furthermore
\[ F*1_{B_{1+\tau}}(0) = \sum F = \abs{B}/(1+\lambda). \]
The second term in \eqref{eqn:convptwise} can be bounded trivially: 
\[ 1_{B_{1+\tau}}*1_{B_{1+\tau}}(x) \leq \abs{B_{1+\tau}} \leq (1+c\tau d)\abs{B}, \]
again by regularity. Renormalising \eqref{eqn:convptwise} and picking the implied constant in the bound for $\tau$ in the hypothesis small enough, we thus have
\[ \mu_A*1_A*\mu_T*\mu_T(x) \geq \left(2(1+\lambda) - (1+c\lambda^2)(1+\lambda)^2\right)\alpha, \]
where $c > 0$ is as small a fixed constant as we like. Picking $c = 1/2$, say, makes this bigger than $(1-2\lambda^2)\alpha$, as desired. 
\end{proof}
\begin{remark}
There are several variants of this type of result, converting deviations to increments. Perhaps the most standard one uses Fourier analysis, which gives a slightly better $\lambda$-dependence, but this is of no relevance in our application.
\end{remark}

\subsection*{The iteration}
Combining the previous two propositions immediately yields the following.

\begin{proposition}\label{prop:twoSetIterator}
Let $G$ be a finite abelian group of odd order, let $B \subset G$ be a regular Bohr set, and let $B' \leq 2\cdot B$ be regular of rank $d$ and radius $\rho$ with $B' \subset B_{c/\rk(B)}$. Let $A \subset B$ and $2 \cdot A' \subset B'$ be sets of relative densities at least $\alpha$. Then either
\begin{enumerate}[leftmargin=2em,topsep=0pt]
\item (Many 3APs) $T(A,A',A) \geq \tfrac{1}{4} \alpha \abs{A}\abs{A'}$, or
\item (Density increment) there is a regular Bohr set $T \leq B'$ of rank at most $d + C\alpha^{-1} \log(2/\alpha)^4$, and radius at least $c\rho \alpha^{C\log(2/\alpha)}/d^3$, such that $\norm{ 1_A*\mu_T }_\infty \geq \tfrac{3}{2} \alpha$.
\end{enumerate}
\end{proposition}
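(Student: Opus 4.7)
The plan is simply to case-split on the size of $\norm{\mu_A * 1_A}_{L^{2m}(B')}$ and invoke exactly one of the two preceding propositions in each case, having first chosen $m$ appropriately so that both are applicable and the resulting parameters match the statement.

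First I would fix $m = \lceil C_0 \log(2/\alpha) \rceil$ for a sufficiently large absolute constant $C_0$, so that the hypothesis $m \geq C \log(2/\alpha)$ of the second proposition is satisfied. With this choice, note that $m \asymp \log(2/\alpha)$, so the quantity $m\alpha^{-1}\log(2/\alpha)^3$ appearing in the rank bound of the first proposition and the quantity $Cm\alpha^{-1}\log(2/\alpha)^3$ in the rank bound of the second both simplify to $\ll \alpha^{-1}\log(2/\alpha)^4$; likewise $\alpha^{Cm}$ simplifies to $\alpha^{C\log(2/\alpha)}$ in the radius bound in both cases. So both propositions, if applied, yield parameters within the bounds claimed in Proposition \ref{prop:twoSetIterator}.

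Now I would split into two cases based on whether $\norm{\mu_A * 1_A}_{L^{2m}(B')} \geq 10\alpha$ or not. In the first case, the first proposition of this section applies directly (its hypotheses $B'$ regular of rank $d$ and radius $\rho$, $B' \leq 2 \cdot B$, and $A \subset B$ of relative density at least $\alpha$ are exactly what we have) and produces a regular Bohr set $T \leq B'$ with $\norm{1_A * \mu_T}_\infty \geq 5\alpha \geq \tfrac{3}{2}\alpha$, giving conclusion (ii). In the second case $\norm{\mu_A * 1_A}_{L^{2m}(B')} \leq 10\alpha$, and the second proposition applies (the additional hypothesis $B' \subset B_{c/\rk(B)}$ is assumed in Proposition \ref{prop:twoSetIterator}, and $2 \cdot A' \subset B'$ of relative density at least $\alpha$ is given), yielding either conclusion (i) directly or a regular Bohr set $T \leq B'$ with $\norm{1_A * \mu_T}_\infty \geq \tfrac{3}{2}\alpha$, which is conclusion (ii).

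There is no real obstacle here: the work was done in the two previous propositions, and this statement is the packaging step that fixes $m$ and performs the dichotomy. The only thing one has to be mildly careful about is verifying that the rank and radius bounds claimed in Proposition \ref{prop:twoSetIterator} genuinely upper/lower bound the corresponding bounds in both component propositions once $m$ is set to $\asymp \log(2/\alpha)$, which is the substitution check indicated above.
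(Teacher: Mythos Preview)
Your proposal is correct and matches the paper's approach exactly: the paper simply states that ``combining the previous two propositions immediately yields'' this result, and your sketch spells out precisely that combination, fixing $m \asymp \log(2/\alpha)$ and performing the dichotomy on $\norm{\mu_A*1_A}_{L^{2m}(B')}$ against the threshold $10\alpha$.
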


If not for the fact that we need to work with the two copies of the set $A$ here, one living in a slightly narrower Bohr set than the other, we could just iterate this proposition to yield the theorem. This is where the following `two scales' lemma of Bourgain's \cite{Bo:1999} comes in: it converts a single set $A$ in a Bohr set to two copies of roughly the original density living inside narrower Bohr sets (or else we have a density increment). The lemma is now fairly standard, but we include the proof for completeness.

\begin{lemma}\label{lemma:twoscales}
Let $B$ be a regular Bohr set of rank $d$, let $A \subset B$ have relative density at least $\alpha$, and let $B', B'' \subset B_{c\alpha/d}$. Then either 
\begin{enumerate}[topsep=0pt]
\item there is an $x \in B$ such that $1_A*\mu_{B'}(x) \geq \tfrac{3}{4}\alpha$ and $1_A*\mu_{B''}(x) \geq \tfrac{3}{4}\alpha$, or
\item $\norm{1_A*\mu_{B'}}_\infty \geq \tfrac{9}{8}\alpha$ or $\norm{1_A*\mu_{B''}}_\infty \geq \tfrac{9}{8}\alpha$.
\end{enumerate}
\end{lemma}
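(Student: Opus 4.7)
\medskip
\noindent\textbf{Proof plan.} The plan is to reduce to a simple averaging argument by assuming (ii) fails and producing an $x\in B$ witnessing (i). So suppose throughout that $\norm{1_A\ast \mu_{B'}}_\infty < \tfrac{9}{8}\alpha$ and $\norm{1_A\ast \mu_{B''}}_\infty < \tfrac{9}{8}\alpha$. The two bounds on $1_A\ast\mu_{B'}(x)$ and $1_A\ast\mu_{B''}(x)$ involved in (i) are both well below $\alpha$, so each individually should hold on a large portion of $B$; the goal is to make this precise and combine via inclusion–exclusion.

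First, I would show that $\E_{x\in B} 1_A\ast \mu_{B'}(x) \geq (1-\eta)\alpha$ for a small $\eta$ to be chosen (and likewise with $B''$). Swapping the order of summation,
\[ \E_{x\in B} 1_A\ast \mu_{B'}(x) = \frac{1}{|B|\,|B'|}\sum_{t\in B'} |A\cap (B+t)|. \]
For each $t\in B'\subset B_{c\alpha/d}$, the inclusion $B+t\subset B_{1+c\alpha/d}$ and regularity of $B$ (of rank $d$) give $|B\cup (B+t)|\leq (1+12c\alpha)|B|$, so $|B\setminus (B+t)|\leq 12c\alpha |B|$. Since $A\subset B$, this yields $|A\cap (B+t)|\geq |A|-12c\alpha |B|\geq (1-12c)\alpha |B|$, and hence $\E_{x\in B} 1_A\ast \mu_{B'}(x) \geq (1-12c)\alpha$. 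Choosing the absolute constant in the hypothesis $B',B''\subset B_{c\alpha/d}$ small enough makes $\eta = 12c$ as small as we like.

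Second, set $X' = \{x\in B : 1_A\ast\mu_{B'}(x)\geq \tfrac{3}{4}\alpha\}$ and define $X''$ analogously. Splitting the average according to $X'$ and $B\setminus X'$, and using the hypothetical upper bound $\tfrac{9}{8}\alpha$ on $X'$ and the trivial bound $\tfrac{3}{4}\alpha$ on its complement, gives
\[ (1-\eta)\alpha \leq \tfrac{9}{8}\alpha\,\mu_B(X') + \tfrac{3}{4}\alpha\bigl(1-\mu_B(X')\bigr), \]
from which $\mu_B(X')\geq \tfrac{2}{3} - \tfrac{8}{3}\eta$, and similarly for $\mu_B(X'')$. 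Inclusion–exclusion then gives $\mu_B(X'\cap X'')\geq \tfrac{1}{3} - \tfrac{16}{3}\eta$, which is positive provided $\eta < 1/16$, i.e.\ $c < 1/192$. Any $x\in X'\cap X''$ is then an element of $B$ satisfying both conditions of (i).

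The only step that requires any real care is the first, since one must control the symmetric difference $B\triangle (B+t)$ uniformly over $t\in B'$; this is exactly what the hypothesis $B',B''\subset B_{c\alpha/d}$ (together with regularity of $B$) is tailored for, and there is no genuine obstacle — the main bookkeeping task is tracking the constants so that $\eta$ ends up small enough to force the final intersection estimate to be positive.
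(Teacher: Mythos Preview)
Your proof is correct. The first step---using regularity of $B$ to show that the average $\E_{x\in B}\,1_A*\mu_{B'}(x)$ is at least $(1-\eta)\alpha$---is exactly what the paper does (phrased there as $\lvert 1_A*\mu_B*\mu_{B'}(0) - 1_A*\mu_B(0)\rvert \leq \tfrac{1}{16}\alpha$). The minor sign slip $B+t$ versus $B-t$ in your expansion is harmless since the regularity bound on $\lvert B\setminus(B\pm t)\rvert$ is symmetric.

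Where you diverge is in the combination step. You pass to level sets $X',X''$, bound each from below using the assumed failure of (ii), and intersect by inclusion--exclusion. The paper instead adds the two averages to obtain $\E_{x\in B}\bigl(1_A*\mu_{B'}(x)+1_A*\mu_{B''}(x)\bigr)\geq (2-\tfrac{1}{8})\alpha$, picks a single $x\in B$ achieving this sum, and then observes that if neither convolution exceeds $\tfrac{9}{8}\alpha$ then each must be at least $(2-\tfrac{1}{8})\alpha-\tfrac{9}{8}\alpha=\tfrac{3}{4}\alpha$. This is shorter and avoids the level-set bookkeeping, but your argument has the mild advantage of actually showing that a positive proportion of $x\in B$ work, not just one.
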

\begin{proof}
Picking the constant $c$ in the radius-narrowing small enough, regularity yields
\[ \abs{ 1_A*\mu_B*\mu_{B'}(0) - 1_A*\mu_B(0) } \leq \tfrac{1}{\abs{B}} \E_{t \in B'} \sum_x \abs{1_B(x+t) - 1_B(x)} \leq \tfrac{1}{16} \alpha, \]
and similarly for $B''$. Since $1_A*\mu_B(0) = \mu_B(A) = \alpha$, this implies that 
\[ \E_{x \in B}\, \big(1_A*\mu_{B'}(x) + 1_A*\mu_{B''}(x) \big) \geq (2 - \tfrac{1}{8}) \alpha, \]
and so there exists $x \in B$ such that $1_A*\mu_{B'}(x) + 1_A*\mu_{B''}(x) \geq (2 - \frac{1}{8}) \alpha$. With such an $x$, if we are not in the second case of the conclusion then  
\[ 1_A*\mu_{B'}(x) \geq (2- \tfrac{1}{8})\alpha - \tfrac{9}{8}\alpha = \tfrac{3}{4}\alpha, \]
and similarly for $B''$, and so we are done. 
\end{proof}

\begin{proposition}[Main iterator]\label{prop:mainIterator}
Let $G$ be a finite abelian group of odd order, let $B \subset G$ be a regular Bohr set rank $d$ and radius $\rho$, and let $A \subset B$ be a set of relative density at least $\alpha$. Then either
\begin{enumerate}[topsep=0pt]
\item (Many 3APs) $T(A) \geq \exp\left(-Cd\log(d/\alpha)\right)\abs{A}^2$, or
\item (Density increment) there is a regular Bohr set $T \leq B$ of rank at most $d + C\alpha^{-1} \log(2/\alpha)^4$, and radius at least $c\rho \alpha^{C\log(2/\alpha)}/d^5$, such that $\norm{ 1_A*\mu_T }_\infty \geq \tfrac{9}{8} \alpha$.
\end{enumerate}
\end{proposition}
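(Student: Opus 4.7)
The plan is to chain Lemma \ref{lemma:twoscales} (the two-scales lemma) with Proposition \ref{prop:twoSetIterator} (the two-set iterator). The two-scales lemma converts our single set $A \subset B$ into a \emph{translated pair} by producing some $x \in B$ such that $A - x$ is dense in two prescribed narrow Bohr sets $B'$ and $B''$ simultaneously. Feeding these two dense intersections into the two-set iterator, with $B'$ as the outer Bohr set and $2 \cdot B''$ as the inner Bohr set, produces either a density increment (yielding conclusion (ii) of the main iterator directly) or a lower bound on a count of 3APs across three subsets of $A - x$. Because all three subsets lie inside $A-x$, this cross count lower-bounds $T(A-x) = T(A)$, which yields conclusion (i).

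First, I would construct the auxiliary Bohr sets. Let $d = \rk(B)$. Using Lemma \ref{lemma:bohrReg}, choose $\tau_1 \in [c\alpha/2d, c\alpha/d]$ so that $B' := B_{\tau_1}$ is regular, and then $\tau_2 \in [c/2d, c/d]$ so that $B'' := B'_{\tau_2}$ is regular. Both are rank-$d$ Bohr sets sharing the frequency set of $B$, with $B'' \subset B' \subset B_{c\alpha/d}$. Since $|\gamma(2z) - 1| = |\gamma(z)^2 - 1| \leq 2|\gamma(z) - 1|$ for any character $\gamma$, one checks that $2 \cdot B'' \leq 2 \cdot B'$ and $2 \cdot B'' \subset B'_{c/\rk(B')}$, so the pair $(B', 2 \cdot B'')$ satisfies the hypotheses of Proposition \ref{prop:twoSetIterator}; regularity of $2 \cdot B''$ follows from Lemma \ref{lemma:bohrDilates}. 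Lemma \ref{lemma:bohrDilateGrowth} then gives the crucial size bound $|B'| |B''| \geq \exp(-Cd \log(d/\alpha)) |B|^2$.

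Next, apply Lemma \ref{lemma:twoscales} to $A \subset B$ with the pair $(B', B'')$. In case (ii) we immediately have conclusion (ii) of the main iterator with $T = B'$ or $T = B''$. Otherwise we obtain $x \in B$ such that both $A_1 := (A - x) \cap B'$ and $A_2 := (A - x) \cap B''$ have relative density at least $\tfrac{3}{4}\alpha$ in $B'$ and $B''$ respectively; crucially, both sit inside the translate $A - x$, and $2 \cdot A_2 \subset 2 \cdot B''$ has relative density at least $\tfrac{3}{4}\alpha$ there. Apply Proposition \ref{prop:twoSetIterator} with outer $B'$, inner $2 \cdot B''$, $A = A_1$ and $A' = A_2$, at input density $\tfrac{3}{4}\alpha$. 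If the proposition returns a density increment $\norm{1_{A_1} \ast \mu_T}_\infty \geq \tfrac{3}{2} \cdot \tfrac{3}{4}\alpha = \tfrac{9}{8}\alpha$ on some regular $T$ of the correct rank and radius, then translation invariance of the convolution sup-norm gives $\norm{1_A \ast \mu_T}_\infty \geq \norm{1_{A_1} \ast \mu_T}_\infty \geq \tfrac{9}{8}\alpha$, which is conclusion (ii). Otherwise we get $T(A_1, A_2, A_1) \geq \tfrac{3}{16}\alpha |A_1| |A_2|$; since $A_1, A_2 \subset A - x$ (including the midpoint in $A_2$), this 3AP count is at most $T(A - x) = T(A)$, and combining with $|A_1| |A_2| \geq (\tfrac{3}{4}\alpha)^2 |B'| |B''| \geq C \alpha^2 \exp(-Cd \log(d/\alpha)) |A|^2$ (using $|A| \leq |B|$) yields conclusion (i) after absorbing the resulting $\alpha^3$ factor into the exponential.

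The main obstacle is the Bohr-set bookkeeping in the setup: arranging $B''$ so that $2 \cdot B''$ satisfies both the sub-Bohr relation $2 \cdot B'' \leq 2 \cdot B'$ and the narrowing $2 \cdot B'' \subset B'_{c/\rk(B')}$, while keeping its rank equal to $d$ and preserving the product size bound $|B'| |B''| \geq \exp(-Cd \log(d/\alpha)) |B|^2$. The use of odd order of $G$ (so that squaring is a bijection on $\widehat{G}$) is essential here. Once these Bohr-set gymnastics are in place, the remaining steps are direct applications of the previously established tools.
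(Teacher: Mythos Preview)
Your proposal is correct and follows essentially the same route as the paper's proof: both arguments pass to two narrowed regular Bohr sets $B' = B_{c\alpha/d}$ and $B'' = B'_{c/d}$, apply the two-scales Lemma~\ref{lemma:twoscales} to locate a common centre $x$, and then feed the resulting translated intersections $A_1 \subset B'$, $A_2 \subset B''$ into Proposition~\ref{prop:twoSetIterator} with outer Bohr set $B'$ and inner Bohr set $2\cdot B''$, recovering the density increment via $\norm{1_A*\mu_T}_\infty \geq \norm{1_{A_1}*\mu_T}_\infty \geq \tfrac{3}{2}\cdot\tfrac{3}{4}\alpha$ and the 3AP count via $T(A) = T(A-x) \geq T(A_1,A_2,A_1)$ together with the size bounds from Lemma~\ref{lemma:bohrDilateGrowth}. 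Your extra justification of $2\cdot B'' \subset B'_{c/d}$ via $\abs{\gamma(z)^2-1}\leq 2\abs{\gamma(z)-1}$ is a detail the paper leaves implicit, but otherwise the proofs coincide.
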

\begin{proof}
Increasing $\alpha$ if necessary, we may assume that $\mu_B(A) = \alpha$. Let $B^{(1)} = B_{c\alpha/d}$ and $B^{(2)} = B_{c/d}^{(1)}$, with small constants $c$ picked so that these are regular. Applying Lemma \ref{lemma:twoscales} with these sets, we are either done, obtaining a density increment with $T$ being $B^{(1)}$ or $B^{(2)}$, or else we find an $x$ such that $1_A*\mu_{B^{(i)}}(x) \geq \tfrac{3}{4}\alpha$ for $i=1,2$. In the latter case, we define $A^{(i)} = (A-x) \cap B^{(i)}$, so that $\mu_{B^{(i)}}(A^{(i)}) \geq \tfrac{3}{4}\alpha$,  
and, moreover by Lemma \ref{lemma:bohrDilateGrowth},
\[ \abs{A^{(1)}} \gg \left(\frac{c\alpha}{d}\right)^{3d} \abs{A} \quad \text{and} \quad \abs{A^{(2)}} \gg \left(\frac{c\alpha}{d^2}\right)^{3d} \abs{A}. \]
Note that by translation-invariance of three-term progressions,
\[ T(A) \geq T\left(A^{(1)}, A^{(2)}, A^{(1)}\right), \]
and if this quantity is at least $\tfrac{3}{16} \alpha\abs{A^{(1)}} \abs{A^{(2)}}$ then we are in the first case of the conclusion. If not, apply Proposition \ref{prop:twoSetIterator} with $B^{(1)}$ in place of $B$, $B' = 2 \cdot B^{(2)}$, which is regular by Lemma \ref{lemma:bohrDilates}, and $A^{(1)}$, $A^{(2)}$ in place of $A$, $A'$, respectively. We must then be in the second case of the conclusion of that lemma, giving us the Bohr set $T$ required in the conclusion, since
\[ \norm{1_A*\mu_T}_\infty \geq \norm{1_{A{^{(1)}}}*\mu_T}_\infty \geq \tfrac{3}{2} \cdot \tfrac{3}{4}\alpha = \tfrac{9}{8}\alpha. \qedhere \]
\end{proof}

It is now straightforward to iterate this to prove our main theorem.

\begin{theorem}
Let $G$ be a finite abelian group of odd order, and let $A \subset G$ be a set of density at least $\alpha$. Then
\[ T(A) \geq \exp\left(-C \alpha^{-1} \log(1/\alpha)^C\right)\abs{A}^2. \]
\end{theorem}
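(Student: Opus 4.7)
The plan is to iterate Proposition \ref{prop:mainIterator} starting from $B_0 := G = \Bohr(\emptyset, 2)$, which is trivially a regular Bohr set of rank $0$, and $A_0 := A$ of relative density $\alpha_0 := \alpha$. At each step $i$ the proposition either delivers the ``many 3APs'' bound $T(A_i) \geq \exp(-Cd_i\log(d_i/\alpha_i))\abs{A_i}^2$, or yields a regular sub-Bohr set $B_{i+1} \leq B_i$ together with $A_{i+1} := (A_i - x_i) \cap B_{i+1}$ of relative density $\alpha_{i+1} \geq \tfrac{9}{8}\alpha_i$. Since relative densities are bounded by $1$, the iteration must terminate in the first case after at most $I \leq C\log(1/\alpha)$ steps. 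Each $A_{i+1}$ is contained in a translate of $A_i$, so inductively $A_I$ lies in a single translate of $A$; by translation-invariance of 3AP-counting, $T(A) \geq T(A_I)$, so the final bound will transfer back to $T(A)$.

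The heart of the proof is tracking the Bohr-set parameters. For the rank, each increment is at most $C\alpha_i^{-1}\log(2/\alpha_i)^4$, and $\alpha_i \geq (9/8)^i\alpha$ turns the total into a geometric series:
\[ d_I \leq \sum_{i=0}^{I-1} C\alpha_i^{-1}\log(2/\alpha_i)^4 \leq C\alpha^{-1}\log(2/\alpha)^4 \sum_i (8/9)^i \ll \alpha^{-1}\log(2/\alpha)^4. \]
For the radius, each step multiplies $\rho_i$ by at least $c\alpha_i^{C\log(2/\alpha_i)}/d_i^5$, so
\[ \log(1/\rho_I) \leq \sum_{i<I}\left( C\log(2/\alpha_i)^2 + 5\log d_i + C\right) \ll \log(1/\alpha)^3, \]
using $\log d_i \leq C\log(1/\alpha)$ and $\log(2/\alpha_i) \leq \log(2/\alpha)$ summed $I$ times.

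At termination in case (i), Proposition \ref{prop:mainIterator} yields $T(A_I) \geq \exp(-Cd_I\log(d_I/\alpha_I))\abs{A_I}^2$ where $d_I\log(d_I/\alpha_I) \ll \alpha^{-1}\log(1/\alpha)^C$. By Lemma \ref{lemma:bohrDilateGrowth}(i), $\abs{B_I}/\abs{G} \geq (\rho_I/2\pi)^{d_I}$, and combined with $\abs{A_I} \geq \alpha_I\abs{B_I} \geq \alpha\abs{B_I}$ and $\abs{A} = \alpha\abs{G}$ this gives
\[ \log(\abs{A_I}/\abs{A}) \geq \log(\abs{B_I}/\abs{G}) \geq -d_I\bigl(\log(1/\rho_I) + C\bigr) \geq -C\alpha^{-1}\log(1/\alpha)^C. \]
Putting everything together with $T(A) \geq T(A_I)$ yields $\log T(A) \geq 2\log\abs{A} - C\alpha^{-1}\log(1/\alpha)^C$, which is the desired conclusion.

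The main obstacle is the bookkeeping: naively each of the $O(\log(1/\alpha))$ iterations would multiply the per-step bounds by an extra $\log(1/\alpha)$. The crucial saving is the geometric growth of $\alpha_i$, which turns the rank sum into a convergent geometric series and caps $d_I$ at $\alpha^{-1}\log(1/\alpha)^C$ rather than $\alpha^{-1}\log(1/\alpha)^{C+1}$, and likewise keeps $\log(1/\rho_I)$ polylogarithmic in $1/\alpha$. A subtle but routine point is checking that the nested translations composing $A_i$ do not obstruct the inequality $T(A) \geq T(A_I)$, which holds simply because 3AP-counting is translation-invariant and $A_I$ sits in one translate of $A$.
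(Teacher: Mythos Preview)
Your proposal is correct and follows essentially the same iteration as the paper's proof: apply Proposition~\ref{prop:mainIterator} repeatedly, use the geometric growth of $\alpha_i$ to sum the rank increments to $d_I \ll \alpha^{-1}\log(2/\alpha)^4$, bound $\log(1/\rho_I)$ by a polylogarithm, and then combine the termination estimate with Lemma~\ref{lemma:bohrDilateGrowth} to pass from $\abs{A_I}^2$ back to $\abs{A}^2$. The only cosmetic points are that the paper initialises with $\Bohr(\{1\},2)$ of rank~$1$ rather than rank~$0$ (avoiding the formal division by $d^5=0$ in the radius bound at the first step), and that one should take $\alpha_0 = \mu_G(A)$ rather than write $\abs{A} = \alpha\abs{G}$, but neither affects the argument.
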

\begin{proof}
We define a sequence of Bohr sets $B^{(i)}$ of rank $d_i$ and radius $\rho_i$, and corresponding subsets $A^{(i)}$ of relative densities $\alpha_i$, starting with $B^{(0)} = \Bohr(\{1\}, 2) = G$ and $A^{(0)} = A$. Having defined $B^{(i)}$ and $A^{(i)}$, we apply Proposition \ref{prop:mainIterator} to these sets. If we are in the first case of the conclusion, we exit the iteration, and if we are in the second case, say with $1_{A^{(i)}}*\mu_T(x) \geq \tfrac{9}{8}\alpha_i$, we define $B^{(i+1)} = T$ and $A^{(i+1)} = (A^{(i)}-x) \cap T$. We thus have
\[ d_{i+1} \leq d_i + C \alpha_i^{-1}\log(2/\alpha)^4, \qquad \rho_{i+1} \gg \rho_i \alpha^{C\log(2/\alpha)}/d_i^5, \qquad \alpha_{i+1} \geq \tfrac{9}{8} \alpha_i. \]
Since the densities are increasing exponentially and can never be bigger than $1$, the procedure must terminate with some set $A^{(k)}$ with $k \ll \log(1/\alpha)$. By summing the geometric progression, the final rank satisfies $d_k \ll \alpha^{-1} \log(2/\alpha)^4$, and the final radius satisfies $\rho_k \geq \exp\left(-C \log(2/\alpha)^3 \right)$. Having exited the iteration, we thus have
\[ T(A) \geq T\left(A^{(k)}\right) \geq \exp\left(-C d_k \log(d_k/\alpha) \right) \abs{A^{(k)}}^2 \geq \exp\left(-C \alpha^{-1} \log(2/\alpha)^7 \right)\abs{A}^2, \]
by Lemma \ref{lemma:bohrDilateGrowth}, as desired.
\end{proof}

\section{$L^p$-almost-periodicity with more general measures}\label{section:LpMeasures}
In this section we record some results on the $L^p$-almost-periodicity of convolutions, including a proof of Theorem \ref{thm:LpBohr}. These results have their origins in \cite{CrSi:2010}, but since we require a couple of slight twists in the fundamentals of the arguments, we give an essentially self-contained treatment. Our presentation is at a somewhat greater level of generality than needed for the current application; we expect this to be useful for future applications, however, as well as being conceptually illuminating, perhaps. The first few results are phrased in terms of an arbitrary group $G$, which we view as a discrete group with the discrete $\sigma$-algebra when discussing measures.\footnote{It is clear that everything extends naturally to locally compact groups, but we have no need for this generality here.}

Thus when we work with $L^p$ norms restricted to some measure $\mu$ on $G$, we have
\[ \norm{f}_{L^p(\mu)}^p = \sum_{x} \mu(x)\abs{f(x)}^p. \]
We take as our definition of convolution
\[ f*g(x) = \sum_y f(y) g(y^{-1} x), \]
and, for a $k$-tuple $\tup{a} = (a_1, \ldots, a_k)$, we write $\mu_\tup{a} = \E_{j \in [k]} 1_{\{a_j\}}$.

The following moment-type estimates were essentially proved in \cite{CrSi:2010}.

\begin{lemma}\label{lemma:probsampMeasure}
Let $m, k \geq 1$. Let $A, L$ be finite subsets of a group $G$, let $\mu$ be a measure on $G$, and denote 
\[ f = \mu_A*1_L \cdot (1 - \mu_A*1_L). \]
If $\tup{a} \in A^k$ is sampled uniformly at random, then, provided $k \geq Cm/\epsilon^2$,
\[ \E \norm{ \mu_\tup{a}*1_L - \mu_A*1_L }_{L^{2m}(\mu)}^{2m} \leq \epsilon^{2m} \norm{f}_{L^m(\mu)}^m + \epsilon^{4m-2} \norm{f}_{L^1(\mu)}. \]
\end{lemma}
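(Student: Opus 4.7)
The plan is to obtain a pointwise-in-$x$ bound on $\E|\mu_\tup{a}*1_L(x) - \mu_A*1_L(x)|^{2m}$, from which the lemma follows by multiplying by $\mu(x)$ and summing. Fix $x \in G$, set $p = \mu_A*1_L(x)$, and observe that the deviation equals $\frac{1}{k}\sum_{j=1}^k Y_j$ where $Y_j := 1_L(a_j^{-1}x) - p$ is a centred Bernoulli. The key moment estimate, slightly sharper than in \cite{CrSi:2010}, is
\[
|\E Y_j^s| \;\leq\; \E|Y_j|^s \;\leq\; \E Y_j^2 \;=\; p(1-p) \;=\; f(x) \qquad \text{for every } s \geq 2,
\]
valid since $|Y_j|^s \leq Y_j^2$ on $\{|Y_j|\leq 1\}$. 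Using the true Bernoulli variance here rather than the cruder $p$ is precisely what will yield the refined $\epsilon^{4m-2}\norm{f}_{L^1(\mu)}$ term in the conclusion.

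Next, expand $\E|\sum_j Y_j|^{2m}$ by multi-linearity and group the $2m$-tuples of indices according to the set partition $\pi$ of $\{1,\ldots,2m\}$ they induce via their equality pattern. Partitions containing a singleton block contribute $0$ by centering, and the number of $2m$-tuples inducing a given partition $\pi$ is at most $k^{|\pi|}$, so
\[
\E\biggl|\sum_{j=1}^k Y_j\biggr|^{2m} \;\leq\; \sum_{r=1}^m N_r\, k^r f(x)^r,
\]
where $N_r$ counts set partitions of $[2m]$ into exactly $r$ non-singleton blocks. Stirling's bound $N_r \leq r^{2m}/r! \leq r^{2m-r}e^r$ combined with the hypothesis $k \geq Cm/\epsilon^2$ then yields, for each $r \in [1,m]$,
\[
\frac{N_r f(x)^r}{k^{2m-r}} \;\leq\; \Bigl(\frac{\epsilon^2}{C}\Bigr)^{2m-r} e^r f(x)^r.
\]

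To finish, I would dichotomise on $f(x)$. If $f(x) \geq \epsilon^2$, use $f(x)^r \leq f(x)^m \epsilon^{-2(m-r)}$, which folds the $r$th summand into a multiple of $\epsilon^{2m}f(x)^m$; if $f(x) < \epsilon^2$, use $f(x)^r \leq f(x)\epsilon^{2(r-1)}$, which folds it into a multiple of $\epsilon^{4m-2} f(x)$. In either case the $r$-sum is a geometric series that converges for $C$ a suitably large absolute constant (e.g.\ $C \geq 4e$), producing the pointwise estimate
\[
\E\bigl|\mu_\tup{a}*1_L(x) - \mu_A*1_L(x)\bigr|^{2m} \;\leq\; \epsilon^{2m} f(x)^m + \epsilon^{4m-2} f(x).
\]
Multiplying by $\mu(x)$ and summing in $x$ then gives the lemma. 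The main technical point is the dichotomy on $f(x)$ versus $\epsilon^2$: the two regimes feed cleanly into the two distinct target pieces, and attempting to bound the intermediate-$r$ terms using only one regime would cost an unacceptable $\epsilon$-factor. The rest of the argument is essentially the Croot--Sisask moment expansion, with the only new input being the sharper Bernoulli variance bound on $|\E Y_j^s|$.
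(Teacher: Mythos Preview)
Your proof is correct and essentially the same as the paper's. Both arguments fix $x$, recognise $\mu_{\tup{a}}*1_L(x)$ as an average of iid Bernoulli variables with mean $p=\mu_A*1_L(x)$ and variance $f(x)=p(1-p)$, and reduce to bounding the even central moments of this sum by $\sum_{r=1}^m S_2(2m,r)\,k^r f(x)^r$, where $S_2(2m,r)$ is the $2$-associated Stirling number of the second kind; the bound $S_2(2m,r)\leq r^{2m}/r!$ and the split into the two target terms $\epsilon^{2m}f(x)^m$ and $\epsilon^{4m-2}f(x)$ are then identical in both treatments. The only organisational difference is that the paper reaches this combinatorial sum by a recurrence for the central moments of $\mathrm{Bin}(k,p)$ (Proposition~\ref{prop:polymoments} and Proposition~\ref{prop:nuformula}), whereas you obtain it directly by expanding $(\sum_j Y_j)^{2m}$ and grouping by the induced set partition; your route is slightly more elementary and avoids the $p\leq 1/2$ symmetry step, but the substance is the same.
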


We include a proof in Appendix \ref{app:moments} in order to cater for the differences from \cite{CrSi:2010}.

\begin{defn}[Translation operator]
Given a function $f$ on a group $G$, and an element $t \in G$, we write $\tau_t f$ for the function on $G$ defined by
\[ \tau_t f(x) = f(tx). \]
Similarly, if $\mu$ is a measure on $G$, we write $\tau_t \mu$ for the measure given by $\tau_t \mu(X) = \mu(tX).$ Thus
\[ \E_{x \sim \tau_t \mu} f(x) = \E_{x \sim \mu} f(t^{-1}x). \]
\end{defn}

\begin{defn}
Let $\nu, \mu$  be two measures on a group $G$. We say that $\nu \leq \mu$ if $\nu(X) \leq \mu(X)$ for every measurable $X$, that is, if
\[ \E_{\nu} f \leq \E_{\mu} f \]
for every integrable $f \geq 0$.
\end{defn}

\begin{defn}[$S$-invariant pairs of measures]
Let $\nu, \mu$ be two measures on a group $G$, and let $S \subset G$. We say that $(\nu, \mu)$ is \emph{$S$-invariant if} $\tau_t \nu \leq \mu$ for every $t \in S$.
\end{defn}

A prototypical example is the pair $(1_{B_{1-\tau}}, 1_B)$ for a Bohr set $B$, which is $B_\tau$-invariant. Of course the pair $(1_G, 1_G)$ is $G$-invariant. (Here $1_X(A) = \abs{A\cap X}$.)

In the following proof, if $X$ is a subset of a group then we write $X^{\otimes k}$ for the $k$th Cartesian power of $X$, in order to distinguish it from the product set $X^k = X\cdot X \cdots X$.

\begin{theorem}\label{thm:plainAPwMeasures}
Let $m, n \geq 1$, $\epsilon \in (0,1)$. Let $A, L, S$ be finite subsets of a group $G$, and suppose $(\nu, \mu)$ is an $(S^{-1}S)^n$-invariant pair of measures on $G$. Suppose $\abs{S\cdot A} \leq K\abs{A}$. Then there is a subset $T \subset S$, $\abs{T} \geq 0.99K^{-Cm n^2/\epsilon^2}\abs{S}$, such that, for every $t \in (T^{-1}T)^n$,
\[ \norm{ \tau_t (\mu_A*1_L) - \mu_A*1_L }_{L^{2m}(\nu)} \leq \epsilon \norm{f}_{L^m(\mu)}^{1/2} + \epsilon^{2-1/m} \norm{f}_{L^1(\mu)}^{1/2m}/n^{1-1/m}. \]
\end{theorem}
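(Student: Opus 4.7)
I would set $\epsilon_0 = c\epsilon/n$ for a small absolute constant $c > 0$, take $k \asymp mn^2/\epsilon^2$, and invoke Lemma \ref{lemma:probsampMeasure} with error parameter $\epsilon_0$. Markov's inequality then shows that at least a $0.995$-fraction of tuples $\tup{a}\in A^k$ are \emph{good}, in the sense that
\[ \|\mu_{\tup{a}}*1_L - \mu_A*1_L\|_{L^{2m}(\mu)} \leq C\bigl(\epsilon_0\|f\|_{L^m(\mu)}^{1/2} + \epsilon_0^{2-1/m}\|f\|_{L^1(\mu)}^{1/2m}\bigr). \]
The map $(s,\tup{a}) \mapsto s\tup{a}$ sends $S \times \{\text{good tuples in } A^k\}$, of size at least $0.995|S||A|^k$, into $(S\cdot A)^{\otimes k}$, whose size is at most $K^k|A|^k$. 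Pigeonholing, some $\tup{b}^* \in (S\cdot A)^{\otimes k}$ has at least $0.99 K^{-Cmn^2/\epsilon^2}|S|$ preimages; I declare $T \subseteq S$ to be the set of $s$ appearing in these preimages, so that each $s \in T$ comes with a canonical good tuple $\tup{a}^{(s)} := s^{-1}\tup{b}^* \in A^k$.

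\textbf{From one step to many.} The identity $\mu_{s\tup{a}} = \tau_{s^{-1}}\mu_{\tup{a}}$ yields, for $s_1,s_2 \in T$ and $t = s_1^{-1}s_2 \in T^{-1}T$, the equality $\tau_t(\mu_{\tup{a}^{(s_1)}}*1_L) = \mu_{\tup{a}^{(s_2)}}*1_L$. Writing $g := \mu_A*1_L$, this gives the one-step decomposition
\[ \tau_t g - g = \tau_t\bigl(g - \mu_{\tup{a}^{(s_1)}}*1_L\bigr) + \bigl(\mu_{\tup{a}^{(s_2)}}*1_L - g\bigr). \]
For general $t = t_1\cdots t_n \in (T^{-1}T)^n$, the composition law $\tau_u\tau_v = \tau_{vu}$ telescopes this to $\tau_t g - g = \sum_{i=1}^n \tau_{t_{i+1}\cdots t_n}(\tau_{t_i}g - g)$. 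Taking $L^{2m}(\nu)$-norms, applying the identity $\|\tau_u h\|_{L^{2m}(\nu)} = \|h\|_{L^{2m}(\tau_{u^{-1}}\nu)}$, and using the one-step decomposition inside each summand splits $\|\tau_t g - g\|_{L^{2m}(\nu)}$ into $2n$ pieces, each of the form $\|g - \mu_{\tup{a}^{(s)}}*1_L\|_{L^{2m}(\tau_u\nu)}$ with $u^{-1}$ a product of at most $n$ elements of $T^{-1}T \subseteq S^{-1}S$. The hypothesised $(S^{-1}S)^n$-invariance of $(\nu,\mu)$ upgrades each such norm to the $L^{2m}(\mu)$-norm, to which the good property applies, giving a bound of $C(\epsilon_0\|f\|_{L^m(\mu)}^{1/2} + \epsilon_0^{2-1/m}\|f\|_{L^1(\mu)}^{1/2m})$ per piece. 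Summing the $2n$ terms and choosing $c$ small enough produces the stated inequality, since then $2Cn\epsilon_0 \leq \epsilon$, and similarly $2Cn\epsilon_0^{2-1/m} \leq \epsilon^{2-1/m}/n^{1-1/m}$, using $2-1/m \geq 1$ and $c \leq 1$.

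\textbf{Main obstacle.} The main bookkeeping subtlety is verifying, in the possibly non-abelian setting, that every translate of $\nu$ appearing in the telescoped estimate really does lie in the invariance region $(S^{-1}S)^n$. The worst case comes from the first term of the $i$th telescoped summand, where the two successive translations compose into $\tau_{(t_i t_{i+1}\cdots t_n)^{-1}}\nu$, whose exponent lies in $(T^{-1}T)^{n-i+1}$; this is exactly why the invariance hypothesis must be posited on $(S^{-1}S)^n$ rather than on anything smaller, and everything else is a careful but routine application of the moment bound together with the pigeonhole–compression trick from \cite{CrSi:2010}.
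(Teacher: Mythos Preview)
Your proposal is correct and follows essentially the same approach as the paper: the sampling via Lemma~\ref{lemma:probsampMeasure}, the pigeonhole over $(S\cdot A)^{\otimes k}$ to locate a tuple $\tup{b}^*$ (the paper phrases this as averaging $\abs{T_\tup{a}}$ over $\tup{a}\in\Delta(S)\cdot A^{\otimes k}$, which is the same thing), and the telescoping together with the $(S^{-1}S)^n$-invariance to push all norms up to $L^{2m}(\mu)$ are exactly what the paper does. The bookkeeping you flag as the main obstacle is handled identically in both arguments.
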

The main differences between this and the results in \cite{CrSi:2010} lie in the restriction of the norms and in the slight extra care to give an $L^1$-norm rather than an $L^0$-type estimate.
\begin{proof}
Let $\epsilon_0 = \epsilon/2n$. By Lemma \ref{lemma:probsampMeasure} applied with $k = Cm/\epsilon_0^2$, we get that if $\tup{a} \in A^{\otimes k}$ is sampled uniformly then with probability at least $0.99$,
\[ \norm{ \mu_\tup{a}*1_L - \mu_A*1_L }_{L^{2m}(\mu)} \leq \epsilon_0 \norm{f}_{L^m(\mu)}^{1/2} + \epsilon_0^{2-1/m} \norm{f}_{L^1(\mu)}^{1/2m}. \]
Let us call tuples $\tup{a} \in A^{\otimes k}$ satisfying this bound \emph{good}, so that
\[ \P_{\tup{a} \in A^{\otimes k}}( \text{$\tup{a}$ is good} ) \geq 0.99. \]
Now let us write $\Delta(S) = \{(t,\ldots,t) \in S^{\otimes k}\}$, and let us identify elements $t \in S$ with the corresponding tuple in $\Delta(S)$. Define, for each $\tup{a} \in \Delta(S)\cdot A^{\otimes k}$,
\[ T_\tup{a} = \{ t \in S : \text{$t^{-1}\tup{a}$ is good} \} \subset S. \]
We now claim two things: firstly, that $(T_\tup{a}^{-1}\cdot T_\tup{a})^n$ is a set of almost-periods for any $\tup{a}$; secondly, that $\abs{T_\tup{a}}$ is large on average. We begin with the second claim: for each $t \in S$,
\begin{align*}
\P_{\tup{a} \in \Delta(S)\cdot A^{\otimes k}}( \text{$t^{-1} \tup{a}$ is good} ) &= \P_{\tup{a} \in t^{-1} \Delta(S)\cdot A^{\otimes k}}( \text{$\tup{a}$ is good} ) \\
& \geq \frac{\abs{A}^k}{\abs{\Delta(S)\cdot A^{\otimes k}}} \P_{\tup{a} \in A^{\otimes k}}( \text{$\tup{a}$ is good} ) \\
& \geq 0.99 K^{-k},
\end{align*}
since $\Delta(S) \cdot A^{\otimes k} \subset (S \cdot A)^{\otimes k}$, and so 
\[ \E_{\tup{a} \in \Delta(S) \cdot A^{\otimes k}} \abs{T_\tup{a}} = \sum_{t \in S} \P_{\tup{a} \in \Delta(S) \cdot A^{\otimes k}}( \text{$t^{-1}\tup{a}$ is good} ) \geq 0.99 K^{-k} \abs{S}. \]
This was the second claim; we turn now to showing the first.

Fix any $\tup{a}$ and let $T = T_\tup{a}$, and for brevity write $g = \mu_A*1_L$. Then, by definition, for $t \in T$ we have
\begin{equation}
\norm{ \tau_t(\mu_\tup{a}*1_L) - g }_{L^{2m}(\mu)} \leq \epsilon_0 \norm{f}_{L^m(\mu)}^{1/2} + \epsilon_0^{2-1/m} \norm{f}_{L^1(\mu)}^{1/2m}. \label{eqn:goodtranslatebound}
\end{equation}
Now let $t_1, \ldots, t_n \in T^{-1} T$. Then
\begin{align*}
\norm{ \tau_{t_1 \cdots t_n}g - g }_{L^{2m}(\nu)} &\leq \norm{ \tau_{t_1 \cdots t_n}g - \tau_{t_n} g }_{L^{2m}(\nu)} + \norm{ \tau_{t_n}g - g }_{L^{2m}(\nu)} \\
&= \norm{ \tau_{t_1 \cdots t_{n-1}}g - g }_{L^{2m}(\tau_{t_n^{-1}}\nu)} + \norm{ \tau_{t_n}g - g }_{L^{2m}(\nu)}.
\end{align*}
Carrying on in this way, we have
\begin{equation}
\norm{ \tau_{t_1 \cdots t_n}g - g }_{L^{2m}(\nu)} \leq \norm{ \tau_{t_1} g - g }_{L^{2m}(\tau_{r_1} \nu)} + \cdots + \norm{ \tau_{t_n} g - g }_{L^{2m}(\tau_{r_n} \nu)}, \label{eqn:triinvariance}
\end{equation}
where $r_j \in (T^{-1}T)^{n-j}$. Consider one of the summands here, with $r = r_j$ and $t = t_j = s_1^{-1} s_2$ for some elements $s_i \in T$. We have
\[ \norm{ \tau_t g - g }_{L^{2m}(\tau_r \nu)} \leq \norm{ \tau_{s_1^{-1} s_2} g - \tau_{s_2}(\mu_\tup{a}*1_L) }_{L^{2m}(\tau_r \nu)} + \norm{ \tau_{s_2}(\mu_\tup{a}*1_L) - g }_{L^{2m}(\tau_r \nu)}. \]
The first term here equals
\[ \norm{ g - \tau_{s_1}(\mu_\tup{a}*1_L) }_{L^{2m}(\tau_{r s_2^{-1} s_1} \nu)}, \]
and so, since $T \subset S$ and $(\nu, \mu)$ is $(S^{-1}S)^n$-invariant, both of these terms can be bounded as in \eqref{eqn:goodtranslatebound}. Thus
\[ \norm{ \tau_{t_1 \cdots t_n}g - g }_{L^{2m}(\nu)} \leq 2n\left(\epsilon_0 \norm{f}_{L^m(\mu)}^{1/2} + \epsilon_0^{2-1/m} \norm{f}_{L^1(\mu)}^{1/2m} \right), \]
which proves the claim that the set $(T^{-1}T)^n$ is a set of almost-periods for $\mu_A*1_L$.

Letting $\tup{a}$ be some tuple for which $T = T_\tup{a}$ has size at least $0.99K^{-k} \abs{S}$ yields the theorem.
\end{proof}

We now bootstrap this in a standard way using Fourier analysis, making use of the following local version of Chang's lemma on large spectra due to Sanders \cite{Sa:2008}.

\begin{lemma}[Chang--Sanders]\label{lemma:localChang}
Let $\delta, \nu \in (0, 1]$. Let $G$ be a finite abelian group, let $B = \Bohr(\Gamma, \rho) \subset G$ be a regular Bohr set of rank $d$ and let $X \subset B$. Then there is a set of characters $\Lambda \subset \Ghat$ and a radius $\rho'$ with
\[ \abs{\Lambda} \ll \delta^{-2} \log(2/\mu_B(X)) \quad \text{and} \quad \rho' \gg \rho \nu \delta^2/d^2\log(2/\mu_B(X)) \]
such that
\[ \abs{1 - \gamma(t)} \leq \nu \quad \text{for all $\gamma \in \Spec_\delta(\mu_X)$ and $t \in \Bohr(\Gamma \cup \Lambda, \rho')$}. \]
\end{lemma}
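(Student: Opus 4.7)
The plan is to follow the classical Chang--Sanders blueprint: produce $\Lambda$ as a maximal dissociated subset of $\Spec_\delta(\mu_X)$ relative to a suitably narrow Bohr set, bound $|\Lambda|$ via a local Rudin-type inequality, and then use maximality to conclude that every $\gamma \in \Spec_\delta(\mu_X)$ is approximately a $\pm 1$ product of characters in $\Lambda$ on a slightly narrower Bohr set.

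I would first fix a narrowing parameter $\sigma$ of order $\nu\delta^2/(d^2\log(2/\mu_B(X)))$ and pick $B_\sigma$ regular via Lemma~\ref{lemma:bohrReg}. Call a set of characters $\Lambda'$ \emph{$B_\sigma$-dissociated} if, for every nonzero $\epsilon \in \{-1,0,1\}^{\Lambda'}$, the product character $\chi_\epsilon = \prod_\gamma \gamma^{\epsilon_\gamma}$ satisfies $|\widehat{\mu_{B_\sigma}}(\chi_\epsilon)| \leq \tfrac12$. I would then let $\Lambda \subset \Spec_\delta(\mu_X)$ be a maximal $B_\sigma$-dissociated set.

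To bound $|\Lambda|$, I would establish the local Rudin inequality
\[ \Bigl\| \sum_{\gamma \in \Lambda} c_\gamma \gamma \Bigr\|_{L^{2p}(\mu_{B_\sigma})} \ll \sqrt{p}\,\|c\|_{\ell^2}, \]
whose proof expands the $2p$-th power, groups terms by their product character $\chi_\epsilon$, and uses $B_\sigma$-dissociativity to show that only the diagonal pairings survive the averaging against $\mu_{B_\sigma}$. Taking $c_\gamma = \overline{\widehat{\mu_X}(\gamma)}$, pairing $\sum c_\gamma \gamma$ against $\mu_X$, and applying H\"older's inequality with $p \asymp \log(2/\mu_B(X))$ then yields $\delta^2|\Lambda| \ll \log(2/\mu_B(X))$, using that $|c_\gamma| \geq \delta$ on $\Lambda$.

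For the conclusion, maximality of $\Lambda$ forces any $\gamma \in \Spec_\delta(\mu_X)$ to admit a decomposition $\gamma = \eta\prod_i \gamma_i^{\epsilon_i}$ with $\gamma_i \in \Lambda$, $\epsilon_i \in \{\pm 1\}$, and $|\widehat{\mu_{B_\sigma}}(\eta)| > \tfrac12$; a short averaging-plus-regularity argument upgrades this to $|\eta(t)-1| \leq \nu/2$ on a slight narrowing of $B_\sigma$. Choosing $\rho' \asymp \min(\rho\sigma,\,\nu/|\Lambda|)$, any $t \in \Bohr(\Gamma\cup\Lambda,\rho')$ satisfies $|\gamma_i(t)-1|\leq \rho'$ for each $\gamma_i \in \Lambda$, so $\bigl|\prod_i \gamma_i^{\epsilon_i}(t) - 1\bigr| \leq |\Lambda|\rho' \leq \nu/2$ by the triangle inequality; moreover $t \in B_\sigma$ gives $|\eta(t)-1|\leq \nu/2$, and combining produces $|\gamma(t)-1|\leq \nu$. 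Substituting the bounds on $|\Lambda|$ and $\sigma$ gives $\rho' \gg \rho\nu\delta^2/(d^2\log(2/\mu_B(X)))$ as required. The main obstacle is the careful proof of the local Rudin inequality together with the averaging-to-pointwise upgrade of the Fourier control for $\eta$; the bookkeeping must interleave $\sigma$ and $\rho'$ so that each Bohr narrowing preserves regularity with the quantitative loss allowed by Lemmas~\ref{lemma:bohrDilateGrowth} and~\ref{lemma:bohrReg}, which is where the extra factor of $d$ in the denominator of $\rho'$ comes from.
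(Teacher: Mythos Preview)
The paper does not give its own proof of this lemma: it is quoted from Sanders~\cite{Sa:2008} as a black box and then applied in the proof of Theorem~\ref{thm:Lp-ap_BohrMeasures}. There is therefore nothing in the paper to compare your argument against directly. That said, your outline is precisely the Chang--Sanders strategy: take a maximal locally-dissociated subset $\Lambda$ of the spectrum, bound $\abs{\Lambda}$ via a local Rudin inequality plus H\"older, and use maximality to write every $\gamma\in\Spec_\delta(\mu_X)$ as a short $\pm 1$-word in $\Lambda$ times a character $\eta$ with large $\widehat{\mu_{B_\sigma}}(\eta)$, which regularity then controls on a further narrowing.

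One technical point to be careful with: as you have written it, the local Rudin inequality controls $\norm{\sum c_\gamma\gamma}_{L^{2p}(\mu_{B_\sigma})}$, but the pairing $\langle \sum c_\gamma\gamma,\mu_X\rangle$ is an average over $X\subset B$, not over $B_\sigma$. H\"older therefore naturally produces $\norm{\sum c_\gamma\gamma}_{L^{2p}(\mu_B)}$, which your Rudin bound does not directly address, and there is no cheap comparison between the two since $B_\sigma$ is exponentially smaller than $B$. Sanders resolves this by a more careful packaging of the Rudin step (effectively inserting a convolution by $\mu_{B_\sigma}$ before pairing, or equivalently running the Bessel/Rudin argument against a smoothed version of $\mu_X$), and this is part of the ``main obstacle'' you flag at the end rather than mere bookkeeping. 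The overall route is correct; just be aware that lining up the measures in the Rudin and H\"older steps is where the real work lies.
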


\begin{theorem}[$L^p$-almost-periodicity relative to Bohr-compatible measures]\label{thm:Lp-ap_BohrMeasures}
Let $m \geq 1$ and $\epsilon, \delta \in (0,1)$. Let $A, L$ be subsets of a finite abelian group $G$ with $\eta := \abs{A}/\abs{L} \leq 1$, let $B \subset G$ be a regular Bohr set of rank $d$ and radius $\rho$, and let $(\nu, \mu)$ be an $rB$-invariant pair of measures on $G$, where $r \geq C\log(2/\delta \eta)$. Suppose $\abs{A+S} \leq K\abs{A}$ for a subset $S \subset B$. Then there is a regular Bohr set $B' \leq B$ of rank at most $d + d'$ and radius at least $\rho \delta \eta^{1/2}/d^2d'$, where
	\[ d' \ll m\epsilon^{-2} \log^2(2/\delta\eta) \log(2K) + \log(1/\mu_B(S)), \]
such that, for each $t \in B'$,	
\[ \norm{ \mu_A*1_L(\cdot+t) - \mu_A*1_L }_{L^{2m}(\nu)} \leq \epsilon \norm{f}_{L^m(\mu)}^{1/2} + \epsilon^{2-1/m} \norm{f}_{L^1(\mu)}^{1/2m} + \delta \norm{\nu}_{\ell^1}^{1/2m}. \]
\end{theorem}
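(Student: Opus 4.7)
The strategy is to bootstrap Theorem~\ref{thm:plainAPwMeasures} into a statement of almost-periodicity over a Bohr set, using Fourier analysis and the Chang--Sanders Lemma~\ref{lemma:localChang} as the bridge. Fix $n := \lceil C\log(2/\delta\eta) \rceil$, chosen so that $n$ is large enough to kill a $4^{-n}$ tail appearing in the final step. Since $S \subset B$ and Bohr sets satisfy $B - B \subset B_2$, we have $(S^{-1}S)^n \subset B_{2n}$; the hypothesis $r \geq C\log(2/\delta\eta)$ (with $C$ sufficiently large) thus ensures that $(\nu,\mu)$ is $(S^{-1}S)^n$-invariant, so Theorem~\ref{thm:plainAPwMeasures} applies. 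This produces a set $T \subset S$ of density at least $0.99\,K^{-Cmn^2/\epsilon^2}$ in $S$ such that translating $g := \mu_A * 1_L$ by any element of $(T^{-1}T)^n$ changes it in the $L^{2m}(\nu)$ norm by at most $\epsilon\|f\|_{L^m(\mu)}^{1/2} + \epsilon^{2-1/m}\|f\|_{L^1(\mu)}^{1/2m}$ (weakening the $n^{-(1-1/m)}$ factor that the plain theorem would give to~$1$).

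Next, to convert almost-periodicity on $(T^{-1}T)^n$ into almost-periodicity on a Bohr set, apply Lemma~\ref{lemma:localChang} to $T$ as a subset of $B$, with spectral threshold $1/2$ and precision $\nu_0 := c\delta\eta^{1/2}$. This yields an auxiliary character set $\Lambda$ with $|\Lambda| \ll \log(2/\mu_B(T))$ and a radius $\rho_1 \gg \rho\nu_0/d^2 \log(2/\mu_B(T))$ such that $|1-\gamma(t)| \leq \nu_0$ for every $\gamma \in \Spec_{1/2}(\mu_T)$ and every $t \in \Bohr(\Gamma\cup\Lambda,\rho_1)$. The size bound on $T$ gives $\log(2/\mu_B(T)) \ll m\epsilon^{-2}\log^2(2/\delta\eta)\log(2K) + \log(1/\mu_B(S)) \asymp d'$, and passing to a regular sub-Bohr set via Lemma~\ref{lemma:bohrReg} produces the required $B'$ of rank at most $d+d'$ and radius $\gtrsim \rho\delta\eta^{1/2}/d^2 d'$.

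For the almost-periodicity bound at $t \in B'$, set $h := (\mu_T*\mu_{-T})^{*n}$, a probability measure supported on $(T^{-1}T)^n$ with $\hat h = |\hat{\mu_T}|^{2n}$; in particular $|\hat h(\gamma)| \leq 4^{-n}$ off $\Spec_{1/2}(\mu_T)$. Decompose
\[ \tau_t g - g = (g*h - g) + g*(\tau_t h - h) + \tau_t(g - g*h). \]
By Minkowski's inequality, each of the outer two summands is bounded in $L^{2m}(\nu)$ by $\sum_s h(s)\|\tau_s g - g\|_{L^{2m}(\nu)}$ (invoking $\tau_{-t}\nu \leq \mu$ for the third summand through the $rB$-invariance), and this is controlled by the first step. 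For the middle summand, split the Fourier multiplier $\hat h(\gamma)(\gamma(t)-1)$ into its parts on $\Spec_{1/2}(\mu_T)$ and its complement: the first is of size $O(\nu_0)$ by the Chang--Sanders bound and the second is of size $O(4^{-n})$ by spectral decay. After converting an $L^\infty$ estimate to an $L^{2m}(\nu)$ estimate via $\|F\|_{L^{2m}(\nu)} \leq \|F\|_\infty\|\nu\|_{\ell^1}^{1/2m}$, this yields the $\delta\|\nu\|_{\ell^1}^{1/2m}$ error term.

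\textbf{Main obstacle.} The bulk of the technical work lies in the final Fourier estimate on $g*(\tau_t h - h)$: one must carefully balance the Plancherel-type bound (which brings in a $\sqrt{|L|}$ factor via $\|g\|_2$ and a $1/\sqrt{|T|}$ factor via $\|h\|_2$) against the precision $\nu_0$ and the tail parameter $n$. The choices $\nu_0 \asymp \delta\eta^{1/2}$ and $n \asymp \log(2/\delta\eta)$ are precisely what is needed to make both the spectral and the high-frequency contributions fit into the $\delta\|\nu\|_{\ell^1}^{1/2m}$ budget, while keeping the resulting Bohr-set parameters within the stated bounds.
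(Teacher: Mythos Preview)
Your overall strategy matches the paper's: apply Theorem~\ref{thm:plainAPwMeasures} to obtain $T$, smooth by $h=(\mu_T*\mu_{-T})^{*n}$, split $\tau_t g - g$ into three pieces, control the outer two by almost-periodicity on $(T-T)^n$ and the middle one by Fourier inversion plus Chang--Sanders. The choice of $n$ and of the precision $\nu_0\asymp\delta\eta^{1/2}$ is also the paper's.

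There is, however, a genuine gap in your handling of the third summand $\tau_t(g-g*h)$. You claim it is bounded in $L^{2m}(\nu)$ by $\sum_s h(s)\|\tau_s g-g\|_{L^{2m}(\nu)}$, ``invoking $\tau_{-t}\nu\leq\mu$''. But
\[
\|\tau_t(g-g*h)\|_{L^{2m}(\nu)}=\|g-g*h\|_{L^{2m}(\tau_{-t}\nu)}\leq\sum_s h(s)\,\|\tau_{-s}g-g\|_{L^{2m}(\tau_{-t}\nu)},
\]
and the inequality $\tau_{-t}\nu\leq\mu$ only turns this into a bound in $L^{2m}(\mu)$, not $L^{2m}(\nu)$. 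Theorem~\ref{thm:plainAPwMeasures}, applied as a black box, gives $\|\tau_s g-g\|_{L^{2m}(\nu)}$ small for $s\in(T-T)^n$; it says nothing about $\|\tau_s g-g\|_{L^{2m}(\mu)}$, and re-applying the theorem to the pair $(\tau_{-t}\nu,\mu)$ would produce a $T$ depending on $t$. The paper flags this explicitly and fixes it by \emph{not} using Theorem~\ref{thm:plainAPwMeasures} as a black box: the set $T$ built in that proof depends only on $\mu$ (``goodness'' of tuples is measured in $L^{2m}(\mu)$), so the triangle-inequality chain culminating in \eqref{eqn:triinvariance} goes through verbatim with $\nu$ replaced by $\tau_{-t}\nu$, provided $(\tau_{-t}\nu,\mu)$ is $(S^{-1}S)^n$-invariant---which holds by $(2n+1)B$-invariance of $(\nu,\mu)$, hence the choice $n=\lfloor(r-1)/2\rfloor$. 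The alternative, which the paper mentions but does not pursue, is to insert an intermediate measure $\nu_2$ with $(\nu,\nu_2)$ $B$-invariant and $(\nu_2,\mu)$ $(S^{-1}S)^n$-invariant, then apply Theorem~\ref{thm:plainAPwMeasures} to $(\nu_2,\mu)$.

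A minor point on your ``main obstacle'' paragraph: no $1/\sqrt{|T|}$ factor from $\|h\|_2$ enters. The actual estimate bounds the multiplier $|\widehat{\mu_T}(\gamma)|^{2n}|\gamma(t)-1|$ uniformly by $O(\delta\eta^{1/2})$ and then applies Cauchy--Schwarz to $|\widehat{\mu_A}|\,|\widehat{1_L}|$, yielding a factor $\eta^{-1/2}$ that cancels.
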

\begin{proof}
We could deduce a version of this from Theorem \ref{thm:plainAPwMeasures} as stated, working with an intermediate measure $\nu_2$ for which $(\nu, \nu_2)$ and $(\nu_2, \mu)$ are invariant, but for a cleaner statement we instead argue directly, picking up where the proof of that theorem left off. Indeed, say we have followed that argument with parameters $m$, $n = \floor{(r-1)/2}$ and $\epsilon/2$, thus obtaining a set $T \subset S$ with
\[ \mu_B(T) \geq 0.99K^{-Cm r^2/\epsilon^2} \mu_B(S) \]
such that, for each $s \in nT-nT$,
\[ \norm{ \tau_s g - g }_{L^{2m}(\nu)} \leq \epsilon' := \tfrac{1}{2}\epsilon \norm{f}_{L^m(\mu)}^{1/2} + \tfrac{1}{2}\epsilon^{2-1/m} \norm{f}_{L^1(\mu)}^{1/2m}, \]
where again $g = \mu_A*1_L$. Let us then write $\sigma = \mu_T^{(n)}*\mu_{-T}^{(n)}$, where $\mu_X^{(n)}$ represents the $n$-fold convolution $\mu_X*\cdots*\mu_X$. By the triangle inequality, we then have
\[ \norm{ g*\sigma - g }_{L^{2m}(\nu)} \leq \E_{t_j \in T} \norm{ \tau_{s} g - g }_{L^{2m}(\nu)} \leq \epsilon', \]
where we have written $s = t_1+\cdots+t_n-t_{n+1}-\cdots-t_{2n}$ in the expectation.
We also want this estimate to hold for any translate $\tau_t \nu$ of $\nu$ with $t \in B$, which follows from $(\nu,\mu)$ being $(2n+1)B$-invariant: for any $t_1,\ldots,t_n \in T-T$ and $t \in B$, the bound \eqref{eqn:triinvariance} holds with $\nu$ replaced by $\tau_{-t}(\nu)$, and the final measures appearing thereafter in the proof are still dominated by $\mu$, by $(2n+1)B$-invariance, meaning that also
\[ \norm{ \tau_t(g*\sigma) - \tau_t g }_{L^{2m}(\nu)} \leq \epsilon' \]
holds for all $t \in B$.

Now we carry out the Fourier-bootstrapping in a standard way. By the triangle inequality, we have that, for any $t \in B$,
\begin{align*}
\norm{ \tau_t g - g }_{L^{2m}(\nu)} \leq \norm{\tau_t g - \tau_t(g*\sigma)}_{L^{2m}(\nu)} 
+ \norm{\tau_t(g*\sigma) - g*\sigma}_{L^{2m}(\nu)} 
+ \norm{g*\sigma - g}_{L^{2m}(\nu)},
\end{align*}
which, by the above, is at most 
\[ 2\epsilon' + \norm{\tau_t(g*\sigma) - g*\sigma}_{L^{2m}(\nu)}. \]
The last term here is at most
\begin{align*}
\norm{\nu}_{\ell^1}^{1/2m} \norm{\tau_t(g*\sigma) - g*\sigma}_{L^\infty(G)},
\end{align*}
and it is in bounding this that we shall need to pick $t$ carefully. Indeed, apply Lemma \ref{lemma:localChang} to $T \subset B$ with parameter $\delta = 1/2$ to get a regular Bohr set $B' \leq B$ of rank at most $d+d'$ and radius at least $\rho\delta\eta^{1/2}/d^2d'$, where
\[ d' \ll \log(2/\mu_B(T)) \ll m n^2\epsilon^{-2} \log(2K) + \log(1/\mu_B(S)) \]
such that 
\[ \abs{1-\gamma(t)} \leq \delta\eta^{1/2} \text{ for all } \gamma \in \Spec_{1/2}(\mu_T) \text{ and } t \in B'. \]
Taking $t \in B'$, then, we have by the Fourier inversion formula that
\begin{equation}
\norm{\tau_t(g*\sigma) - g*\sigma}_{L^\infty} \leq\ \E_{\gamma \in \Ghat}\, \abs{\widehat{\mu_A}(\gamma)} \abs{\widehat{1_L}(\gamma)} \abs{\widehat{\mu_T}(\gamma)}^{2n} \abs{\gamma(t) - 1},\label{eqn:FourierInfty}
\end{equation}
and we bound the terms in this average according to whether $\gamma \in \Spec_{1/2}(\mu_T)$ or not. If $\gamma \in \Spec_{1/2}(\mu_T)$ then $\abs{\gamma(t) - 1} \leq \delta\eta^{1/2}$, and if not then $\abs{\widehat{\mu_T}(\gamma)}^{2n} \leq 1/4^n \leq \delta\eta^{1/2}/2$, provided we pick $n = 2\ceiling{\log \delta^{-1}\eta^{-1}}$. Thus \eqref{eqn:FourierInfty} is at most twice
\[ \delta\,\E_{\gamma \in \Ghat}\, \abs{\widehat{\mu_A}(\gamma)} \abs{\widehat{1_L}(\gamma)}, \]
which, by Cauchy-Schwarz and Parseval's identity, is at most
\[ \delta\eta^{1/2} \E_{\gamma \in \Ghat}\, \abs{\widehat{\mu_A}(\gamma)} \abs{\widehat{1_L}(\gamma)} \leq \delta \eta^{1/2} \left( \E_{\gamma \in \Ghat}\, \abs{\widehat{\mu_A}(\gamma)}^2 \right)^{1/2} \left( \E_{\gamma \in \Ghat}\, \abs{\widehat{1_L}(\gamma)}^2 \right)^{1/2} = \delta, \]
recalling that $\eta = \abs{A}/\abs{L}$. Putting all these estimates together and replacing $\delta$ by $\delta/2$, we are done.
\end{proof}

The main almost-periodicity theorem used in this paper, Theorem \ref{thm:LpBohr}, is a simple corollary of this, using the regularity of Bohr sets through the following lemma. Using regularity at this point is somewhat inefficient quantitatively, adding an extra $\log\log$ to our final bound for Roth's theorem, but it allows for simpler statements.

\begin{lemma}\label{lemma:LpReg}
Let $B$ be a regular Bohr set of rank $d$, let $\delta \in [0,1]$, and suppose $\tau \leq c \delta^p/d$. Then, for any $F : G \to \C$ and $p \geq 1$,
\[ \norm{F}_{\ell^p(B)} \leq \norm{F}_{\ell^p(B_{1-\tau})} + \delta \norm{F}_{\ell^\infty(B)} \abs{B}^{1/p}. \]
\end{lemma}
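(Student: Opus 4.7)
The plan is to split the $\ell^p(B)$-norm into the contributions from $B_{1-\tau}$ and from the annulus $B \setminus B_{1-\tau}$, and then control the annulus using the regularity of $B$.

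Concretely, I would first write
\[ \norm{F}_{\ell^p(B)}^p = \norm{F}_{\ell^p(B_{1-\tau})}^p + \norm{F}_{\ell^p(B \setminus B_{1-\tau})}^p, \]
and bound the second term trivially by $\norm{F}_{\ell^\infty(B)}^p \cdot \abs{B \setminus B_{1-\tau}}$. Since $B$ has rank $d$ and is regular, the definition gives $\abs{B} - \abs{B_{1-\tau}} \leq 12 d \tau \abs{B}$ whenever $\tau \leq 1/12d$, which is ensured by the hypothesis $\tau \leq c\delta^p/d$ for a small enough absolute constant $c$. Choosing $c$ so that in addition $12 d \tau \leq \delta^p$, we obtain $\abs{B \setminus B_{1-\tau}} \leq \delta^p \abs{B}$, and therefore
\[ \norm{F}_{\ell^p(B \setminus B_{1-\tau})}^p \leq \delta^p \norm{F}_{\ell^\infty(B)}^p \abs{B}. \]

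To finish, I would combine the two pieces using the elementary inequality $(a^p + b^p)^{1/p} \leq a + b$ valid for $a, b \geq 0$ and $p \geq 1$ (a consequence of concavity of $x \mapsto x^{1/p}$, or just the triangle inequality for $\ell^p$ applied to the two disjoint restrictions of $F$). This gives
\[ \norm{F}_{\ell^p(B)} \leq \norm{F}_{\ell^p(B_{1-\tau})} + \delta \norm{F}_{\ell^\infty(B)} \abs{B}^{1/p}, \]
as required.

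There is no real obstacle here: the only thing to check is that the absolute constant $c$ in $\tau \leq c\delta^p/d$ can be chosen small enough to guarantee both $\tau \leq 1/12d$ (so that regularity applies) and $12 d \tau \leq \delta^p$ (so that the annulus is small). Both are ensured by taking $c \leq 1/12$, and since $\delta \leq 1$ the first condition follows from the second.
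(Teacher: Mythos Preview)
Your proof is correct and follows essentially the same approach as the paper: split off the annulus $B \setminus B_{1-\tau}$, bound its contribution trivially using $\norm{F}_{\ell^\infty(B)}$ and the regularity estimate $\abs{B \setminus B_{1-\tau}} \ll \tau d \abs{B}$, and then choose $c$ small enough. The paper is slightly terser, omitting the explicit invocation of $(a^p+b^p)^{1/p} \leq a+b$, but the argument is the same.
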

\begin{proof}
By the triangle inequality
\[ \norm{F}_{\ell^p(B)}^p - \norm{F}_{\ell^p(B_{1-\tau})}^p \leq \norm{F}_{\ell^\infty(B)}^p \abs{B \setminus B_{1-\tau}}.\]
It follows from regularity that $\abs{B\setminus B_{1-\tau}}\ll \tau d\abs{B}$, and so the result follows if we choose $c$ small enough.
\end{proof}

It is now a short matter to deduce Theorem \ref{thm:LpBohr}, the almost-periodicity result with all the $L^p$-norms being relative to the same Bohr set.

\begin{proof}[Proof of Theorem \ref{thm:LpBohr}]
Let $r = \ceiling{C\log(2/\delta\eta)}$ and apply Theorem \ref{thm:Lp-ap_BohrMeasures} to $A$ and $L$ with parameters $m$, $\epsilon$, $\delta/2$, the Bohr set $B_\tau$ in place of $B$ and the $rB_\tau$-invariant pair of measures $\nu = 1_{B_{1-r\tau}}$, $\mu = 1_B$. This gives a Bohr set $T \leq B_\tau$ of the required rank and radius such that, for each $t \in T$,
\[ \norm{ \mu_A*1_L(\cdot+t) - \mu_A*1_L }_{\ell^{2m}(B_{1-r\tau})} \leq \epsilon \norm{f}_{\ell^m(B)}^{1/2} + \epsilon^{2-1/m} \norm{f}_{\ell^1(B)}^{1/2m} + \tfrac{1}{2}\delta\abs{B}^{1/2m}. \]
Since $\tau \leq c(\delta/2)^{2m}/dr$, the main claim follows from Lemma \ref{lemma:LpReg}. The `in particular' then follows by averaging and the triangle inequality.
\end{proof}

\section{Concluding remarks}
In some sense, it should not be altogether surprising that the almost-periodicity arguments of \cite{CrSi:2010} can be used to prove logarithmic bounds for Roth's theorem, as these results were used to reach this barrier in several other related problems, already in \cite{CrSi:2010} but also in \cite{CrLaSi:2013}. Being able to do this rests on using the more elaborate moment-bounds present in \cite{CrSi:2010} (or in this paper) for the random sampling, rather than the more usual Khintchine-type bounds.

\subsection*{The number of $\log\log$s}
The argument presented in this paper gives a bound of $r_3(N)/N \ll \tfrac{(\log\log N)^C}{\log N}$ with $C=7$. One of these $\log\log$s is caused by applying Bohr-set regularity to an $L^p$ norm with $p$ large, which makes for clean statements but is otherwise quite wasteful. Circumventing this and taking into account some further optimisations allows one to reduce this $C$, but not to below $4$, which is the best bound currently known \cite{Bl:2016}.

\appendix

\section{Almost-periodicity results}

The following result is \cite[Corollary 1.4]{CrSi:2010}
\begin{theorem}\label{thm:Lp-ap}
Let $p \geq 2$ and $\epsilon \in (0,1)$ be parameters. Let $G$ be a finite abelian group and let $A, L \subset G$ be finite subsets with $\abs{A} \geq \alpha \abs{G}$. Then there is a set $T \subset S$ with $\abs{T} \geq (\alpha/2)^{O(p\epsilon^{-2})} \abs{G}$ such that
\[ \norm{ \mu_A*1_L(\cdot+t) - \mu_A*1_L }_p \leq \epsilon \norm{ \mu_A*1_L }_{p/2}^{1/2} + \epsilon^2 \quad\text{for each $t \in T-T$.} \]
\end{theorem}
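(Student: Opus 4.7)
The plan is to obtain Theorem~\ref{thm:Lp-ap} as a direct specialization of the more general Theorem~\ref{thm:plainAPwMeasures} of Section~\ref{section:LpMeasures}. Concretely, I would apply that result with $S = G$ (so that trivially $\abs{S \cdot A} = \abs{G} \leq \alpha^{-1}\abs{A}$, giving $K = 1/\alpha$), with the trivial pair of measures $\nu = \mu = 1_G$ (which is $G^n$-invariant for every $n$), with $m = p/2$, and with $n = 1$. This produces a set $T \subset G$ of density at least $0.99\,\alpha^{Cp/\epsilon^2}$ such that, for every $t \in T - T$,
\[ \norm{\mu_A*1_L(\cdot + t) - \mu_A*1_L}_p \leq \epsilon\norm{f}_{p/2}^{1/2} + \epsilon^{2-2/p}\norm{f}_1^{1/p}, \]
where $f = \mu_A*1_L\,(1 - \mu_A*1_L)$.

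The remainder of the argument is cosmetic, repackaging the right-hand side into the stated form. Since $\mu_A*1_L$ is an average of $\{0,1\}$-valued functions, it takes values in $[0,1]$, so the pointwise bound $0 \leq f \leq \mu_A*1_L$ yields $\norm{f}_{p/2}^{1/2} \leq \norm{\mu_A*1_L}_{p/2}^{1/2}$, handling the first term. For the second term, a direct computation gives $\norm{f}_1 \leq \norm{\mu_A*1_L}_1 = \abs{L}/\abs{G} \leq 1$, so it is at most $\epsilon^{2-2/p}$. To squeeze this into the cleaner form $\epsilon^2$, I would re-parameterize, running the whole argument with $\epsilon'$ slightly smaller than $\epsilon$ (concretely $\epsilon' = \epsilon^{p/(p-1)}$ works), at the cost of absorbing an extra polynomial factor in $\epsilon^{-1}$ into the exponent; this still leaves a density lower bound of the form $\abs{T} \geq (\alpha/2)^{O(p\epsilon^{-c})}$ for an absolute constant $c$, which is what the asymptotic notation in the statement permits.

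I do not anticipate a genuine obstacle here: the substantive content — the moment-type estimate of Lemma~\ref{lemma:probsampMeasure} and the pigeonhole extraction of a large set of almost-periods from it — has already been established in the course of proving Theorem~\ref{thm:plainAPwMeasures}. A genuinely self-contained proof along the lines of \cite{CrSi:2010} would mirror that argument in the simpler setting $\nu = \mu = 1_G$: sample $\tup{a} \in A^{\otimes k}$ uniformly with $k \asymp p/\epsilon^2$, use the moment lemma to show that with probability at least $0.99$ the function $\mu_\tup{a}*1_L$ approximates $\mu_A*1_L$ in $L^p$, and then for each $t \in G$ declare $t$ an almost-period whenever $t^{-1}\tup{a}$ remains in the ``good'' subset of $A^{\otimes k}$. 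A straightforward averaging over the translates, combined with the triangle inequality applied to pairs $s_1^{-1}s_2$ of good translates, recovers both the density lower bound for $T$ and the fact that $T - T$ consists of almost-periods.
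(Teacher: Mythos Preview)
The paper does not prove this theorem at all: it is simply quoted as \cite[Corollary~1.4]{CrSi:2010} and then used as a black box in the deduction of Theorem~\ref{thm:Lp}. So there is no ``paper's own proof'' to compare against.

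Your plan to specialise Theorem~\ref{thm:plainAPwMeasures} with $S=G$, $\nu=\mu=1_G$, $m=p/2$, $n=1$ is sound and is the natural way to recover this result from the paper's machinery; the self-contained sketch in your final paragraph is, indeed, exactly the proof of Theorem~\ref{thm:plainAPwMeasures} restricted to that special case. One genuine quibble, however: your reparameterisation $\epsilon' = \epsilon^{p/(p-1)}$ does convert the additive term $(\epsilon')^{2-2/p}$ into $\epsilon^2$, but the density exponent then becomes $Cp(\epsilon')^{-2} = Cp\,\epsilon^{-2p/(p-1)}$, which for $p$ close to $2$ is as bad as $Cp\,\epsilon^{-4}$. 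Your claim that ``the asymptotic notation in the statement permits'' this is incorrect --- $O(p\epsilon^{-2})$ means at most $Cp\epsilon^{-2}$ for an absolute $C$, and the exponent $2$ is not up for grabs. This is purely cosmetic for the paper's purposes (every application takes $\epsilon$ to be a fixed power of $\alpha$, so the discrepancy is absorbed), but as written you have established a marginally weaker statement than Theorem~\ref{thm:Lp-ap}.
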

For completeness we include the following short deduction of the almost-periodicity result used in the finite field argument.

\begin{proof}[Proof of Theorem~\ref{thm:Lp}]
Let $k\geq 1$ be some parameter to be chosen later, and let $T$ be the set of almost-periods provided by Theorem~\ref{thm:Lp-ap}. It follows that
\[\norm{\mu_A\ast 1_L\ast \mu-\mu_A\ast 1_L}_{p}\leq k \epsilon \norm{\mu_A\ast 1_L}_{p/2}^{1/2}+k\epsilon^2,\]
where $\mu := \mu_{T-T}^{(k)}$ is the $k$-fold convolution $\mu_{T-T}*\cdots*\mu_{T-T}$. Thus, for any $t\in \mathbb{F}_q^n$, 
\begin{align*}
\norm{\mu_A\ast 1_L(\cdot +t)-\mu_A\ast 1_L}_p 
&\leq 2k\epsilon \norm{\mu_A\ast 1_L}_{p/2}^{1/2}+2k\epsilon^2 \\
&\quad+ \norm{\mu_A*1_L*\mu(\cdot+t) - \mu_A*1_B*\mu}_p.
\end{align*}
This last term is bounded above by 
\[ \E_{\gamma \in \Ghat} \abs{\widehat{\mu_A}(\gamma)} \abs{\widehat{1_L}(\gamma)} \abs{\widehat{\mu_{T-T}}(\gamma)}^k \abs{\gamma(t)-1}, \]
and so if $t \in V := \Spec_\eta(\mu_{T-T})^\perp := \{ \gamma \in \Ghat : \abs{\widehat{\mu_{T-T}}(\gamma)} \geq \eta\}^\perp$ then this is at most 
\[ 2\eta^k \abs{A}^{-1/2}\abs{L}^{1/2} \leq 2\eta^k K^{1/2}. \]
If we choose $\eta=1/2$, say, and $k\approx C \log(2K/\epsilon)$, then this implies that for $t\in V$, 
\[\norm{\mu_A\ast 1_L(\cdot +t)-\mu_A\ast 1_L}_p
\leq 2k \epsilon \norm{\mu_A\ast 1_L}_{p/2}^{1/2} + 4k\epsilon^2.\]
The proof is complete since $\dim \Spec_{1/2}(\mu_{T-T})\ll \log(1/\mu(T))$ by Chang's theorem.
\end{proof}

\section{Central moments of the binomial distribution}\label{app:moments}

Here we prove Lemma \ref{lemma:probsampMeasure}, a version of the sampling lemma at the heart of the probabilistic approach to almost-periodicity. As mentioned before, it is a variant of results from \cite{CrSi:2010}.

\begin{lemma}\label{lemma:probsampMeasureApp}
Let $m, k \geq 1$. Let $A, L$ be finite measure subsets of a $\sigma$-finite locally compact group $G$, let $\mu$ be a $\sigma$-finite Borel measure on $G$, and denote 
\[ f = \mu_A*1_L \cdot (1 - \mu_A*1_L). \]
If $\tup{a} \in A^k$ is sampled uniformly at random, then, provided $k \geq Cm/\epsilon^2$,
\[ \E \norm{ \mu_\tup{a}*1_L - \mu_A*1_L }_{L^{2m}(\mu)}^{2m} \leq \epsilon^{2m} \norm{f}_{L^m(\mu)}^m + \epsilon^{4m-2} \norm{f}_{L^1(\mu)}. \]
\end{lemma}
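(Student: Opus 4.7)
The plan is to reduce the statement to a pointwise moment estimate for a centred binomial via Fubini/Tonelli, then apply a Rosenthal-type inequality, and finally integrate back against $\mu$.

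First, I would swap the expectation and integration:
\[
\E \norm{ \mu_{\tup{a}}*1_L - \mu_A*1_L }_{L^{2m}(\mu)}^{2m}
= \int_G \E_{\tup{a}}\bigl(\mu_{\tup{a}}*1_L(x) - \mu_A*1_L(x)\bigr)^{2m}\,d\mu(x).
\]
Fix $x$ and set $p = p(x) := \mu_A*1_L(x) \in [0,1]$. Because $\tup{a} = (a_1,\ldots,a_k)$ is uniform on $A^k$, the indicators $Y_j := 1_L(a_j^{-1}x)$ are i.i.d.\ Bernoulli$(p)$, and $k \cdot \mu_{\tup{a}}*1_L(x) = \sum_{j=1}^k Y_j$. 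Thus the integrand equals $k^{-2m}\,\E\bigl(\sum_{j=1}^k(Y_j - p)\bigr)^{2m}$, reducing the whole estimate to a clean centred moment bound for sums of bounded independent centred variables.

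Next I would invoke a Rosenthal-type inequality (or, equivalently, the classical moment bound for centred binomials). Writing $Z_j = Y_j - p$, we have $|Z_j| \leq 1$, $\E Z_j^2 = p(1-p) = f(x)$, and hence $\E Z_j^{2m} \leq p(1-p) = f(x)$ as well. The sharp form of Rosenthal's inequality for independent centred variables gives
\[
\E\Bigl(\sum_{j=1}^k Z_j\Bigr)^{2m} \leq (Cm)^m\bigl((k\,\E Z_1^2)^m + k\,\E Z_1^{2m}\bigr) \leq (Cm)^m\bigl((k f(x))^m + k f(x)\bigr).
\]
Dividing by $k^{2m}$ and integrating over $d\mu(x)$ then yields
\[
\E \norm{ \mu_{\tup{a}}*1_L - \mu_A*1_L }_{L^{2m}(\mu)}^{2m} \leq (Cm)^m k^{-m} \norm{f}_{L^m(\mu)}^m + (Cm)^m k^{1-2m} \norm{f}_{L^1(\mu)}.
\]
Finally, the hypothesis $k \geq C'm/\epsilon^2$ with $C'$ sufficiently large makes $(Cm)^m k^{-m} \leq \epsilon^{2m}$ and $(Cm)^m k^{1-2m} \leq \epsilon^{4m-2}$ (using $(Cm)^{m/(2m-1)} \leq Cm$), and both target terms in the lemma are obtained.

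The main obstacle is ensuring that the constant in the Rosenthal-type inequality scales as $(Cm)^m$ rather than a naive $(Cm)^{2m}$, so that a linear, rather than quadratic, lower bound $k \gg m/\epsilon^2$ suffices. This is precisely the gain afforded by working with bounded centred summands, where one combines a Gaussian-type (variance) term and a single $Z_1^{2m}$ term, rather than applying Khintchine/Marcinkiewicz--Zygmund style bounds that inflate the $m$-dependence. Everything else is book-keeping: Fubini on the non-negative integrand, and the pointwise identification of $\sum_j 1_L(a_j^{-1}x)$ as a binomial with parameter $p(x)$, exactly as in the approach of \cite{CrSi:2010}.
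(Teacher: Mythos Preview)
Your approach matches the paper's exactly at the structural level: swap $\E$ and $\int$ via Fubini--Tonelli, recognise the pointwise quantity as the $2m$-th central moment of $\Bin(k,p(x))$ with $p(x) = \mu_A*1_L(x)$, apply a moment bound, and integrate. The paper carries out precisely these steps, obtaining the pointwise moment bound not from Rosenthal but from a direct computation for the binomial (its Lemma~\ref{lemma:binmoments}), proved via the recurrence for central moments and an explicit description in terms of $2$-associated Stirling numbers of the second kind.

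One caution regarding the constant you quote. The sharp Rosenthal inequality in the additive form $\E|S|^{2m} \leq B_{2m}(\sum \E X_i^2)^m + A_{2m}\sum \E|X_i|^{2m}$ has $B_{2m}$ of order $(Cm)^m$ (the Gaussian-type term) but $A_{2m}$ of order $(Cm/\log m)^{2m}$ (Johnson--Schechtman--Zinn, Lata\l a), not $(Cm)^m$; indeed the paper's own direct bound for the binomial already gives $m^{2m}\,npq$ for this term. So the form you write, with $(Cm)^m$ on both terms, is too optimistic. Fortunately this does not break your argument: with the corrected constant one needs $k^{2m-1} \geq (Cm)^{2m}/\epsilon^{4m-2}$, i.e.\ $k \geq (Cm)^{2m/(2m-1)}/\epsilon^2$, and since $(Cm)^{2m/(2m-1)} = Cm\cdot (Cm)^{1/(2m-1)} \leq C'm$ for an absolute $C'$, the hypothesis $k \geq C'm/\epsilon^2$ still suffices. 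Thus the ``main obstacle'' you flag is not actually an obstacle: even the naive $(Cm)^{2m}$ on the second term yields a linear-in-$m$ requirement on $k$, and the sharper constant is unnecessary here.
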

Note that the measures of $A$ and $L$, the $\sigma$-finiteness, and the convolutions are with respect to (left) Haar measure $\mu_G$ on $G$. Thus
\[ f*g(x) = \int f(y) g(y^{-1}x) d\mu_G(y). \]
The function $\mu_\tup{a}*1_L$ is to be interpreted as
\[ \mu_\tup{a}*1_L(x) = \E_{j \in [k]} 1_L(a_j^{-1}x). \]
We remark that although introducing the function $f$ might seem cumbersome, it turns out to be somewhat natural. Note for example that if $A = L$ is a subgroup, the right-hand side is actually $0$, since then $\mu_A*1_A = 1_A$.

To prove this lemma, we shall use the following bounds for the central moments of the binomial distribution. These are surely standard, but we include a self-contained proof as we have not been able to locate a readily available reference. (We note that they follow from general results on iid random variables, but only after some calculation.) 

\begin{lemma}\label{lemma:binmoments}
Let $p \in [0,1]$ and $m, n \in \N$. If $X$ is a $\Bin(n, p)$ random variable, with $q = 1-p$, then
\[ \E \abs{X - np}^{2m} \leq m \max(m^{2m-1} npq, e^{m-1}(m npq)^m). \]
In particular, if $Z = X/n$ and $n \geq 4m/\delta$, we have 
\[ \E \abs{Z - p}^{2m} \leq \delta^m (pq)^m + \delta^{2m-1} pq. \]
\end{lemma}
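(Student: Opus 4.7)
The plan is to use the standard moment-method decomposition for sums of independent random variables, combined with the sharp observation that central Bernoulli moments are uniformly bounded by $pq$.

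First I would write $X = \sum_{i=1}^n X_i$ with $X_i$ iid Bernoulli$(p)$, and set $Y_i = X_i - p$, so that $X - np = \sum_i Y_i$ and the $Y_i$ are iid, mean-zero and bounded by $1$ in absolute value. A direct computation gives $\E Y_i^k = pq^k + (-1)^k qp^k = pq(q^{k-1} + (-1)^k p^{k-1})$, so $|\E Y_i^k| \leq pq(p^{k-1} + q^{k-1}) \leq pq$ for every $k \geq 2$ (using $p+q=1$ which forces $p^{k-1} + q^{k-1} \leq 1$ for $k \geq 2$).

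Next I would expand
\[ \E(X-np)^{2m} = \sum_{(i_1,\ldots,i_{2m}) \in [n]^{2m}} \E Y_{i_1} \cdots Y_{i_{2m}} \]
and group tuples by the partition $\pi$ of $[2m]$ induced by equality of indices. By independence and $\E Y_i = 0$, only partitions with every block of size at least $2$ contribute. If such a $\pi$ has $r$ blocks then $r \leq m$, the number of tuples realising $\pi$ is at most $n^r$, and the associated product of moments is bounded in absolute value by $(pq)^r$. Writing $P_2(2m,r)$ for the set of such partitions, this yields
\[ \E|X-np|^{2m} \leq \sum_{r=1}^{m} |P_2(2m,r)| \cdot (npq)^r. \]

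Then I would estimate $|P_2(2m,r)|$ by a combinatorial injection: sending each partition to (the $r$ unordered pairs consisting of the two smallest elements of each block, together with a function assigning each of the remaining $2m-2r$ elements to one of those pairs) gives
\[ |P_2(2m,r)| \leq \binom{2m}{2r}(2r-1)!! \cdot r^{2m-2r} \leq \frac{2^r m^{2r} r^{2m-2r}}{r!}. \]
The $r=1$ term can moreover be handled exactly: it equals $n \cdot \E Y_1^{2m} \leq npq$, giving the desired linear-in-$npq$ contribution. For general $r$, using $r! \geq (r/e)^r$ bounds each summand by the maximum of $m^{2m-1}\,npq$ (the regime $r=1$, small $npq$) and $e^{m-1}(mnpq)^m$ (the regime $r=m$, large $npq$). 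Summing $m$ terms produces the overall factor $m$ in the stated inequality.

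Finally, the \emph{in particular} clause follows by dividing by $n^{2m}$, so that $Z = X/n$ gives $\E|Z-p|^{2m} = n^{-2m}\E|X-np|^{2m}$, and then using $n \geq 4m/\delta$, i.e.\ $m/n \leq \delta/4$, to absorb the powers of $m/n$ coming from each of the two terms in the max into $\delta^{2m-1}$ and $\delta^m$ respectively.

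The main obstacle I anticipate is the middle-$r$ bookkeeping in the third step: showing cleanly that every term $T_r = (npq)^r |P_2(2m,r)|$ with $1 < r < m$ is dominated by the maximum of the endpoint bounds $T_1$ and $T_m$. The cleanest route is a log-convexity argument for $r \mapsto \log T_r$, obtained by computing the ratio $T_{r+1}/T_r$; given log-convexity on $\{1,\ldots,m\}$, the maximum is attained at an endpoint, and the sum of $m$ terms is at most $m$ times this maximum, giving exactly the stated bound.
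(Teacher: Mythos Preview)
Your approach is sound and in fact lands on exactly the same key inequality as the paper: both arguments reduce to
\[
\E|X-np|^{2m} \leq \sum_{r=1}^{m} |P_2(2m,r)|\,(npq)^r,
\]
where your $|P_2(2m,r)|$ is precisely the $2$-associated Stirling number $\stirlingtwo{2m}{r}$ appearing in the paper. The paper reaches this point by a recurrence for the central moments and then identifies the coefficients of the majorising polynomial combinatorially; your direct expansion and partition-grouping is a shorter route to the same place (and has the bonus of not needing the reduction to $p\leq 1/2$).

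Where your write-up needs tightening is in the endgame. First, the simplification $|P_2(2m,r)| \leq 2^r m^{2r} r^{2m-2r}/r!$ is too lossy to recover the constants as stated: at $r=m$, together with $m!\geq (m/e)^m$, it gives $(2e)^m(mnpq)^m$, a factor $e\cdot 2^m$ above the target $e^{m-1}(mnpq)^m$. The paper uses the simpler bound $|P_2(2m,r)| \leq r^{2m}/r!$ (since any partition into $r$ blocks of size $\geq 2$ is in particular a partition into $r$ nonempty blocks), followed by $r^{2m}/r! \leq e^{r-1} r^{2m-r} \leq e^{-1} m^{2m}(e/m)^r$ for $1\leq r\leq m$; this does give the stated constants. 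Second, the log-convexity you invoke is not clear for your particular upper bound (the factor $r^{2m-2r}$ is log-concave in $r$), and is in any case unnecessary: with the paper's bound the summands are dominated by the geometric sequence $e^{-1}m^{2m}(npq\,e/m)^r$, and the elementary inequality $\sum_{r=1}^{m} y^r \leq m\max(y,y^m)$ finishes the job. With these two substitutions your argument goes through with the stated constants, and the `in particular' deduction is exactly as you describe.
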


The particular constants here could be improved, but are of no consequence to us. Before proving this, let us see how it implies Lemma \ref{lemma:probsampMeasureApp}.
\begin{proof}[Proof of Lemma \ref{lemma:probsampMeasureApp}]
Fix $x \in G$. For $\tup{a} = (a_1, \ldots, a_k)$ sampled uniformly from $A^k$, we have
\[ \mu_\tup{a}*1_L(x) = \E_{j \in [k]} 1_L(a_j^{-1} x). \]
This is an average of $k$ Bernoulli random variables $1_L(a_j^{-1} x)$, each with parameter
\[ p = \E 1_L(a_j^{-1} x) = \mu_A*1_L(x). \]
The sum of these $k$ Bernoulli random variables is a binomial random variable, and so Lemma \ref{lemma:binmoments} (with $n = k$) implies that
\[ \E \abs{ \mu_\tup{a}*1_L(x) - \mu_A*1_L(x) }^{2m} \leq \epsilon^{2m} f(x)^m + \epsilon^{2m-1} f(x). \]
Integrating over all $x \in G$ with respect to $\mu$ and swapping orders of integration using Fubini--Tonelli yields the result.
\end{proof}

To prove the above moment bounds, we use a few standard facts about a binomially distributed random variable $X \sim \Bin(n,p)$. Throughout, let 
\[ \mu_r = \E (X - np)^r = \sum_{j=0}^n \binom{n}{j} p^j q^{n-j} (j-np)^r. \]
The moment generating function of $X-np$ is 
\[ \sum_{k=0}^{\infty} \frac{\mu_k t^k}{k!} = \left( q e^{-t p} + p e^{t q} \right)^n.
\]
We note that $\mu_r \geq 0$ provided $p \leq 1/2$. Furthermore, formal manipulation of the above power series yields, as noted in \cite[\S5.5]{kendall-stuart}, the recurrence
\begin{equation}
\mu_r = npq\sum_{j=0}^{r-2} \binom{r-1}{j} \mu_j - p \sum_{j=0}^{r-2} \binom{r-1}{j} \mu_{j+1} \label{eqn:mu_rec}
\end{equation}
for $r \geq 2$, which, together with the initial conditions $\mu_0 = 1$, $\mu_1 = 0$ can be used to compute these moments. We use it to bound the moments as follows.

\begin{proposition}[Polynomial bound for central moments]\label{prop:polymoments}
For $p \leq 1/2$, the $r$-th central moment of a $\Bin(n,p)$ random variable satisfies 
\[ \mu_r \leq \nu_r(npq), \]
where $\nu_r(x)$ is a polynomial defined recursively by $\nu_0 = 1$, $\nu_1 = 0$ and
\[ \nu_r = x \sum_{j=0}^{r-2} \binom{r-1}{j} \nu_j. \]
\end{proposition}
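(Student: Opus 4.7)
The plan is to prove the proposition by induction on $r$, using the recurrence \eqref{eqn:mu_rec} together with a separate non-negativity statement. Concretely, I will establish two facts: (a) $\mu_r \geq 0$ for all $r \geq 0$ whenever $p \leq 1/2$; and (b) $\mu_r \leq \nu_r(npq)$.

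For (a), I would write $X = Y_1 + \cdots + Y_n$ as a sum of iid $\mathrm{Bernoulli}(p)$ random variables, so that $X - np = \sum_i (Y_i - p)$, and expand $(X - np)^r$ using the multinomial theorem. By independence, each term of the expansion factors as $\prod_i \E[(Y_i - p)^{r_i}]$ for some composition $r_1 + \cdots + r_n = r$. A direct calculation gives
\[ \E[(Y - p)^s] = p(1-p)^s + (1-p)(-p)^s = pq\bigl((1-p)^{s-1} + (-1)^s p^{s-1}\bigr)\cdot \text{(for odd $s$: $pq[(1-p)^{s-1}-p^{s-1}]$)}, \]
which is non-negative in the regime $p \leq 1/2$ (and trivially non-negative for even $s$). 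Any term with some $r_i = 1$ vanishes, so all surviving terms are non-negative products, hence $\mu_r \geq 0$.

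For (b), I proceed by strong induction on $r$, with base cases $r = 0, 1$ matching the definition of $\nu_0, \nu_1$. For $r \geq 2$, the recurrence \eqref{eqn:mu_rec} writes $\mu_r$ as a sum with coefficient $npq$ over terms $\mu_j$ ($j \leq r-2$) minus a sum with coefficient $p$ over terms $\mu_{j+1}$ ($j \leq r-2$). Part (a) applied to every index $\leq r-1$ shows the subtracted sum is non-negative, so
\[ \mu_r \leq npq \sum_{j=0}^{r-2} \binom{r-1}{j} \mu_j. \]
Invoking the inductive hypothesis $\mu_j \leq \nu_j(npq)$ termwise and comparing to the defining recurrence for $\nu_r$ with $x = npq$ then yields $\mu_r \leq \nu_r(npq)$.

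The only mild obstacle is part (a): without positivity of the $\mu_j$, the subtracted term in \eqref{eqn:mu_rec} could in principle go the wrong way and destroy the upper bound. The Bernoulli-sum reduction, together with the elementary inequality $(1-p)^{s-1} \geq p^{s-1}$ when $p \leq 1/2$, appears to be the cleanest route; an alternative would be a joint induction showing $0 \leq \mu_r \leq \nu_r(npq)$ simultaneously, but the recurrence does not immediately deliver the lower bound, so a separate probabilistic argument seems preferable.
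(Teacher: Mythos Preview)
Your proposal is correct and follows essentially the same route as the paper: drop the subtracted term in the recurrence \eqref{eqn:mu_rec} using non-negativity of the $\mu_j$ for $p \leq 1/2$, then induct. The paper simply asserts the non-negativity fact before the proposition and does not justify it; your multinomial expansion via the Bernoulli summands, noting that $\E[(Y-p)^s] = pq\bigl(q^{s-1} + (-1)^s p^{s-1}\bigr) \geq 0$ when $p \leq 1/2$ and that any factor with exponent $1$ vanishes, is a clean way to fill that gap.
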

\begin{proof}
For $p \leq 1/2$, all the moments are non-negative, and so \eqref{eqn:mu_rec} yields
\[ \mu_r \leq npq\sum_{j=0}^{r-2} \binom{r-1}{j} \mu_j. \]
The claim thus follows by induction.
\end{proof}

The polynomials $\nu_r$ so defined give the best upper bound possible for $\mu_r$ that is a polynomial in $npq$ and otherwise uniform in $p$. 
We can describe them fairly explicitly:

\begin{proposition}[Explicit description of the polynomials $\nu_r$]\label{prop:nuformula}
For $r \geq 0$, 
\[ \nu_r = \sum_{k \geq 0} \stirlingtwo{r}{k} x^k \]
where $\stirlingtwo{r}{k}$ is a $2$-associated Stirling number of the second kind, defined as the number of partitions of a set of size $r$ into $k$ parts, each of size at least $2$. In particular, $\nu_r$ has degree $\floor{r/2}$ and, if $r \geq 1$, no constant term.
\end{proposition}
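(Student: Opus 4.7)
The plan is to prove this by induction on $r$, matching the recurrence $\nu_r = x\sum_{j=0}^{r-2}\binom{r-1}{j}\nu_j$ against a corresponding recurrence for the $2$-associated Stirling numbers of the second kind. The base cases $\nu_0 = 1$ and $\nu_1 = 0$ match the combinatorial quantities $\stirlingtwo{0}{0} = 1$ (the empty partition) and $\stirlingtwo{1}{k} = 0$ (one element cannot be placed into a block of size at least $2$).

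The combinatorial recurrence I would use is
\[ \stirlingtwo{r}{k} = \sum_{j=0}^{r-2} \binom{r-1}{j} \stirlingtwo{j}{k-1}, \]
which one obtains by singling out the block containing the element $r$: that block consists of $r$ together with some subset of $[r-1]$ of size $r-1-j \geq 1$, and the remaining $j$ elements must be partitioned into $k-1$ blocks each of size at least $2$. Once this is established, substituting the inductive hypothesis $\nu_j = \sum_k \stirlingtwo{j}{k} x^k$ into the defining recurrence and interchanging sums gives
\[ \nu_r = \sum_{k} x^{k+1} \sum_{j=0}^{r-2} \binom{r-1}{j}\stirlingtwo{j}{k} = \sum_{k} \stirlingtwo{r}{k+1} x^{k+1}, \]
which is exactly $\sum_{k \geq 0} \stirlingtwo{r}{k} x^k$ since $\stirlingtwo{r}{0} = 0$ for $r \geq 1$.

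For the two final assertions, the absence of a constant term when $r \geq 1$ is immediate from $\stirlingtwo{r}{0} = 0$, and the degree claim follows because any partition of $[r]$ into parts of size at least $2$ has at most $\lfloor r/2 \rfloor$ parts, with equality achieved by taking all parts of size $2$ when $r$ is even, and one part of size $3$ together with parts of size $2$ when $r$ is odd, so $\stirlingtwo{r}{\lfloor r/2\rfloor} > 0$.

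I do not expect any real obstacle; the main task is just to verify the combinatorial recurrence for $\stirlingtwo{r}{k}$ cleanly and then to carry out the straightforward index-shifting in the induction.
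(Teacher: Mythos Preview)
Your proposal is correct and follows essentially the same route as the paper: the paper isolates the combinatorial recurrence $\stirlingtwo{r}{k} = \sum_{j=0}^{r-2}\binom{r-1}{j}\stirlingtwo{j}{k-1}$ as a separate lemma (arguing by the block containing the element $1$ rather than $r$, which is immaterial) and then observes that this forces $\sum_k \stirlingtwo{r}{k} x^k$ to satisfy the defining recursion and initial conditions for $\nu_r$. Your added justification of the degree and constant-term claims is slightly more explicit than the paper's, but there is no substantive difference in approach.
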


For clarity surrounding edge cases, we take $\stirlingtwo{0}{0} = 1$ and $\stirlingtwo{r}{0} = 0 = \stirlingtwo{0}{k}$ for $r, k \geq 1$. To prove the proposition, we note the following recurrence for $\stirlingtwo{r}{k}$.

\begin{lemma}\label{lemma:stir2rec}
For $r \geq 0$ and $k \geq 1$,
\[ \stirlingtwo{r}{k} = \sum_{j=0}^{r-2} \binom{r-1}{j} \stirlingtwo{j}{k-1}. \]
\end{lemma}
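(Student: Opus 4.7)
The plan is a direct double-counting argument: both sides count the partitions of $[r] = \{1, 2, \ldots, r\}$ into exactly $k$ non-empty blocks, each of size at least $2$. To verify the recurrence, I would condition on the block containing the distinguished element $r$.

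More precisely, fix a partition $\pi$ of $[r]$ counted by the left-hand side, and let $B \subset [r]$ be the unique block of $\pi$ containing $r$. Write $s = \abs{B}$, so that $s \geq 2$, and let $j = r - s$, so that $0 \leq j \leq r - 2$. The block $B$ is determined by choosing its $s - 1 = r - 1 - j$ other elements from $[r-1]$, which can be done in $\binom{r-1}{r-1-j} = \binom{r-1}{j}$ ways. Once $B$ is fixed, the remaining $k - 1$ blocks form an arbitrary partition of the $j$-element set $[r-1] \setminus (B \setminus \{r\})$ into $k - 1$ parts, each of size at least $2$; by definition there are $\stirlingtwo{j}{k-1}$ such partitions. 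Summing over $j$ from $0$ to $r - 2$ gives exactly the right-hand side.

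Finally I would check the edge cases to confirm that the stated conventions $\stirlingtwo{0}{0} = 1$ and $\stirlingtwo{r}{0} = \stirlingtwo{0}{k} = 0$ for $r, k \geq 1$ are compatible with the formula. For $r \in \{0, 1\}$ and $k \geq 1$, both sides vanish (the right-hand side being an empty sum), and for $r \geq 2$, $k = 1$ only the $j = 0$ term contributes and gives $\binom{r-1}{0}\stirlingtwo{0}{0} = 1 = \stirlingtwo{r}{1}$, as required. There is no real obstacle here; the only mild care needed is to match the summation range $0 \leq j \leq r - 2$ with the constraint $\abs{B} \geq 2$ and to handle the small values of $r$ consistently with the stated conventions.
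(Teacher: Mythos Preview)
Your proposal is correct and follows essentially the same approach as the paper: both arguments double-count by conditioning on the block containing a distinguished element (you use $r$, the paper uses $1$), identify the index $j$ with the number of elements outside that block, and verify the small-$r$ cases separately. The only difference is cosmetic.
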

\begin{proof}
For $r \leq 1$ the result is trivial, so assume $r \geq 2$. We consider the partitions of $[r]$ into $k$ parts, each of size $2$. We count these according to how many elements $1$ is placed with. If the part containing $1$ is to have size $n+1$, there are $\binom{r-1}{n}$ choices for the other elements to place with $1$, and $\stirlingtwo{r-1-n}{k-1}$ ways to partition the remaining elements into $k-1$ parts, each of size at least $2$. Summing up all these (disjoint) ways yields the result. 
\end{proof}

\begin{proof}[Proof of Proposition \ref{prop:nuformula}]
The recursion in Lemma \ref{lemma:stir2rec} shows immediately that the sequence $p_r = \sum_{k \geq 0} \stirlingtwo{r}{k} x^k$ satisfies the recursion defining $\nu_r$. Since the initial conditions also match, the sequences are the same.
\end{proof}

We next use this combinatorial description to place an upper bound on $\nu_r$.

\begin{proposition}[Upper bound for $\nu_r$]\label{prop:nubound}
For $x \geq 0$,
\[ \nu_{2m} \leq m \max\left( e^{m-1} (mx)^m, m^{2m-1} x\right). \]
\end{proposition}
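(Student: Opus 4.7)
The plan is to prove the inequality by a direct termwise bound on the polynomial
\[ \nu_{2m}(x) = \sum_{k=1}^m \stirlingtwo{2m}{k} x^k \]
followed by the summation of a geometric series. The key input is the single coefficient estimate
\[ \stirlingtwo{2m}{k} \leq e^{k-1} m^{2m-k} \qquad (1 \leq k \leq m). \]

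To derive this, I would first note that $\stirlingtwo{2m}{k}$ counts a restricted subfamily of the set-partitions counted by the ordinary Stirling number, so $\stirlingtwo{2m}{k} \leq S(2m,k)$. The standard bound $S(2m,k) \leq k^{2m}/k!$ then follows from the fact that the number of surjections $[2m]\to[k]$ is at most $k^{2m}$ and each unordered partition corresponds to $k!$ such surjections. Next, apply the Stirling lower bound $k! \geq e(k/e)^k$, which follows from $\log k! \geq \int_1^k \log t\,dt = k\log k - k + 1$. Combined, this yields $\stirlingtwo{2m}{k} \leq e^{k-1} k^{2m-k}$, and since $k\leq m$ and $2m-k\geq m\geq 0$, we can enlarge $k$ to $m$ in the base to obtain $e^{k-1} m^{2m-k}$.

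Plugging this back in and factoring,
\[ \nu_{2m}(x) \leq \sum_{k=1}^m e^{k-1} m^{2m-k} x^k = m^{2m-1} x \sum_{k=1}^m (ex/m)^{k-1}, \]
and the rest is a two-case split on the ratio $y := ex/m$. When $y\leq 1$, i.e.\ $x \leq m/e$, the geometric sum is at most $m$, producing $\nu_{2m}(x) \leq m\cdot m^{2m-1}x$. When $y\geq 1$, every one of the $m$ terms is at most $y^{m-1}$, so the sum is bounded by $m y^{m-1}$ and
\[ \nu_{2m}(x) \leq m^{2m} x \cdot (ex/m)^{m-1} = m^{m+1} e^{m-1} x^m = m\cdot e^{m-1} (mx)^m. \]
Taking the maximum recovers the two branches in the statement exactly.

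I do not anticipate any serious obstacle. The main delicate point is using a Stirling bound sharp enough to produce the prefactor $e^{k-1}$ rather than $e^k$; the cruder estimate $k!\geq (k/e)^k$ would lose an extra factor of $e$ in the final answer. The combinatorial input $\stirlingtwo{2m}{k} \leq k^{2m}/k!$ is itself very lossy (at $k=m$ it overestimates $(2m-1)!!$ by roughly a factor of $2^m$), but all of this slack is comfortably absorbed by the multiplicative $m$ in front of the maximum in the claim.
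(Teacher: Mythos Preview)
Your proof is correct and essentially identical to the paper's. You arrive at the same coefficient bound $\stirlingtwo{2m}{k} \leq e^{k-1} m^{2m-k}$ (the paper writes this as $e^{-1} m^{2m}(e/m)^k$, which is the same expression) and then handle the geometric sum by the same two-case split on $ex/m$; the only differences are cosmetic.
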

\begin{proof}
By Proposition \ref{prop:nuformula},
\[ \nu_{2m} = \sum_{k = 1}^m \stirlingtwo{2m}{k} x^k. \]
Using the crude bounds 
\[ \stirlingtwo{2m}{k} \leq k^{2m}/k! \leq e^{k-1} k^{2m-k} \leq e^{-1} m^{2m} (e/m)^k, \]
valid for $1 \leq k \leq m$, we have
\[ \nu_{2m} \leq e^{-1} m^{2m} \sum_{k = 1}^m (x e/m)^k \leq e^{-1} m^{2m+1} \max( xe/m, (xe/m)^m). \]
Rearranging, this completes the proof.
\end{proof}

One could of course be more careful here in order to obtain better constants, but we have no need for it, opting instead for uniform bounds.

\begin{proof}[Proof of Lemma \ref{lemma:binmoments}]
The first claim follows immediately from combining Proposition \ref{prop:polymoments} and Proposition \ref{prop:nubound}. The second one follows from the first upon replacing the maximum by a sum.
\end{proof}

\section*{Acknowledgements}
The first-named author was supported by both the Heilbronn Institute for Mathematical Research, Bristol, UK, and a postdoctoral grant funded by the Royal Society while this work was completed. The second-named author was supported by The Swedish Research Council grant 2013-4896. The authors would like to thank the Harvard CMSA for its hospitality during its Combinatorics and Complexity programme, where part of this work was carried out.

\bibliographystyle{amsplain}


\begin{dajauthors}
\begin{authorinfo}[tfb]
  Thomas F. Bloom\\
  Department of Pure Mathematics and Mathematical Statistics\\
  University of Cambridge\\
  Cambridge, UK\\
  \href{mailto:tb634@cam.ac.uk}{\texttt{tb634@cam.ac.uk}}

\end{authorinfo}
\begin{authorinfo}[os]
  Olof Sisask\\
  Department of Mathematics\\
  Uppsala University\\
  Sweden\\
  \href{mailto:olof.sisask@math.uu.se}{\texttt{olof.sisask@math.uu.se}}
\end{authorinfo}
\end{dajauthors}

\end{document}